\newtheorem{thm}{Theorem}[section]
\newtheorem{theorem}[thm]{Theorem}
\newtheorem{lem}[thm]{Lemma}
\newtheorem{lemma}[thm]{Lemma}
\newtheorem{cor}[thm]{Corollary}
\newtheorem{qst}[thm]{Question}
\newtheorem{prop}[thm]{Proposition}
\theoremstyle{definition}
\newtheorem{df}[thm]{Definition}
\newtheorem{definition}[thm]{Definition}
\newtheorem{rk}[thm]{Remark}
\newtheorem{obs}[thm]{Observation}
\newtheorem{remark}[thm]{Remark}
\newtheorem*{mainthmA}{Theorem A}
\newtheorem*{mainthmB}{Theorem B}
\newtheorem*{mainthmC}{Theorem C}
\newtheorem*{mainthmD}{Theorem D}
  \newcommand{\calF}{\mathcal{F}}
  \newcommand{\calO}{\mathcal{O}}
  \newcommand{\calR}{\mathcal{R}}
  \newcommand{\calS}{\mathcal{S}}
  \newcommand{\calT}{\mathcal{T}}
  \newcommand{\calX}{\mathcal{X}}
  \newcommand{\NN}{\mathbb{N}}
  \newcommand{\RR}{\mathbb{R}}
  \renewcommand{\SS}{\mathbb{S}}
\newcommand{\out}{\textup{Out}(F_r)}
\newcommand{\oOmega}{\overline{\Omega}}
\newcommand{\oT}{\overline{T}}
\newcommand{\oK}{\overline{K}}
\newcommand{\ol}[1]{\overline{#1}}
\newcommand{\pc}[1]{C^{#1}}
\newcommand{\sref}[1]{\S\ref{#1}}
\newcommand*{\inlineequation}[2][]{%
  \begingroup
    % Put \refstepcounter at the beginning, because
    % package `hyperref' sets the anchor here.
    \refstepcounter{equation}%
    \ifx\\#1\\%
    \else
      \label{#1}%
    \fi
    % prevent line breaks inside equation
    \relpenalty=10000 %
    \binoppenalty=10000 %
    \ensuremath{%
      % \displaystyle % larger fractions, ...
      #2%
    }%
    ~\@eqnnum
  \endgroup
}
   \newcommand{\mF}{{\mathcal F}}
  \newcommand{\mS}{{\mathcal S}}
 \newcommand{\mP}{{\mathcal P}}
 \newcommand{\mR}{{\mathcal R}}
  \newcommand{\A}{\mathcal{A}}
 \newcommand{\B}{\mathcal{B}}
\newcommand{\dss}{Y_r} %dense subset
\newcommand{\mauto}{g}
\newcommand{\rsimp}{\sig_0}
\renewcommand{\span}{\text{span}}
\renewcommand{\b}{f}
\newcommand{\ov}[1]{\overrightarrow{#1}}
  \newcommand{\param}{{\mathchoice{\mkern1mu\mbox{\raise2.2pt\hbox{$
  \centerdot$}}
  \mkern1mu}{\mkern1mu\mbox{\raise2.2pt\hbox{$\centerdot$}}\mkern1mu}{
  \mkern1.5mu\centerdot\mkern1.5mu}{\mkern1.5mu\centerdot\mkern1.5mu}}}
\newcommand{\os}{\mathcal{X}_r}
\newcommand{\ros}{\mathcal{RX}_r}
\newcommand{\uos}{\hat{\mathcal{X}}_r}
\newcommand{\uros}{\mathcal{URX}_r}
\newcommand{\ms}{\mathcal{M}_r}
\newcommand{\teich}{Teichm\"{u}ller }
\newcommand{\MCG}{\text{MCG} }
\newcommand{\eps}{\varepsilon}
\newcommand{\lip}[1]{\text{Lip}(#1)}
\newcommand{\al}{\alpha}
\newcommand{\from}{\colon}
\newcommand{\sig}{\sigma}
\newcommand{\st}[3]{\text{st}_{#1}(#2,#3) }
\newcommand{\im}{\text{Im}}
\newcommand{\far}{\rightharpoonup} %fold arrow
\begin{document}
\title{A dense geodesic ray in the $\out$-quotient of reduced Outer Space.}
\author{Yael Algom-Kfir, Catherine Pfaff}

\address{\tt Department of Mathematics, University of Haifa \newline
 % \indent Mount Carmel;  Haifa, 31905;  Israel
\indent  {\url{http://www.math.haifa.ac.il/algomkfir/}}, } \email{\tt yalgom@univ.haifa.ac.il}
  
\address{\tt Department of Mathematics, University of California at Santa Barbara \newline
%  \indent South Hall, Room 6607; Santa Barbara, CA 93106-3080
  \indent  {\url{http://math.ucsb.edu/~cpfaff/}}, } \email{\tt cpfaff@math.ucsb.edu}

\begin{abstract}
In \cite{m81} Masur proved the existence of a dense geodesic in the moduli space for a Teichm\"{u}ller space. We prove an analogue theorem for reduced Outer Space endowed with the Lipschitz metric. We also prove two results possibly of independent interest: we show Brun's unordered algorithm weakly converges and from this prove that the set of Perron-Frobenius eigenvectors of positive integer $m \times m$ matrices is dense in the positive cone $\RR^m_+$ (these matrices will in fact be the transition matrices of positive automorphisms). We give a proof in the appendix that not every point in the boundary of Outer Space is the limit of a flow line.
\end{abstract}

\thanks{\tiny{This research was supported by THE ISRAEL SCIENCE FOUNDATION (grant no. 1941/14). The second author was supported first by the ARCHIMEDE Labex (ANR-11-LABX- 0033) and the A*MIDEX project (ANR-11-IDEX-0001-02) funded by the ``Investissements d'Avenir,'' managed by the ANR. She was secondly supported by the CRC701 grant of the DFG, supporting the projects B1 and C13 in Bielefeld.}} 

\maketitle

\section{Introduction}

\subsection{Geodesics in Outer Space}

One of the richest and most expansive methods for studying surfaces has been through the ergodic geodesic flow on \teich space. 
As an example, it was used by Eskin and Mirzakhani  \cite{eskinMirCounting} to count pseudo-Anosov conjugacy classes 
of a bounded length. For this reason, the papers of Masur \cite{m82} and Veech \cite{v82} independently proving the ergodicity of the \teich flow were seminal in the field. 
The existence of an $\out$-invariant ergodic geodesic flow on 
 Outer Space may similarly expand the tools for studying $\out$. 
Before giving the proof of the ergodicity theorem in \teich space, Masur performed the following a ``litmus test'' for its plausibility.

\begin{thm}[\cite{m81}]\label{thm_Mas1}
Given a closed surface $S_g$ of genus $g$, there exists a \teich geodesic in the \teich space $T_g$ whose projection $p : T_g \to T_g/\MCG(S_g)$ to moduli space is dense in both directions. 
\end{thm}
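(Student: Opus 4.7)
The plan is to construct $\gamma$ as a subsequential limit of longer and longer Teichm\"uller geodesic segments whose projections to $M_g := T_g/\MCG(S_g)$ visit an exhausting countable family of open sets. Fix a countable basis $\{U_n\}_{n \in \ZZ}$ of nonempty open sets for $M_g$, indexed by $\ZZ$ with repetitions so that each basis element appears infinitely often in both tails. Fix a basepoint $x_0 \in T_g$ and a thick part $T_g^{\geq \eps}$.

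The central geometric input I would want is a closing/fellow-traveling lemma of the following form: if $[x,y] \subset T_g$ is a long Teichm\"uller segment whose endpoints lie in the thick part, and $z \in T_g$ is within distance $R$ of the interior of $[x,y]$ in the thick part, then $p([x,y])$ passes within $R' = R'(R,\eps)$ of $p(z)$, and this is preserved (with slightly weaker constants) under bounded perturbation of the endpoints. This is essentially quasi-convexity of thick-part Teichm\"uller geodesics (Minsky) combined with the cocompactness of the $\MCG$-action on $T_g^{\geq \eps}$.

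Using this, I would inductively build segments $\sigma_k = [a_k,b_k]$ with $d(x_0,a_k), d(x_0,b_k) \to \infty$ such that $p(\sigma_k) \cap U_n \neq \emptyset$ for every $|n| \leq k$. Step $k \to k+1$: pick a thick-part lift $z_+ \in T_g$ of some point of $U_{k+1}$, apply an $\MCG$-element to bring it near the natural forward extension of $\sigma_k$, push $b_k$ beyond it to get $b_{k+1}$; do the symmetric thing on the other side to extend $a_k$ to $a_{k+1}$ using $U_{-(k+1)}$; and invoke the fellow-traveling lemma to certify that the new geodesic $[a_{k+1},b_{k+1}]$ still meets all the previously guaranteed neighborhoods. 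After each step, translate $\sigma_k$ by some $h_k \in \MCG$ so that a designated midpoint lies in a fixed compact region of $T_g$, then extract a subsequential limit $\gamma: \RR \to T_g$ from $\{h_k \sigma_k\}$ by Arzel\`a--Ascoli. The limit is a bi-infinite Teichm\"uller geodesic whose projection meets every $U_n$, hence is dense in both directions.

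The main obstacle is the behavior of long Teichm\"uller geodesics through the thin part: basis elements $U_n$ cannot all be taken in a single thick part, since $M_g^{\geq\eps}$ is not dense in $M_g$, and the fellow-traveling input above genuinely uses thick-part quasi-convexity. To visit thin-part neighborhoods one must supplement the construction with explicit thin-part excursions (built, say, from Dehn twists around short curves), and synchronize these with the thick-part passages so the closing lemma still applies. Coordinating thick-part fellow-traveling with prescribed thin-part excursions along a single bi-infinite geodesic, without losing passages already secured at earlier stages, is the principal technical hurdle; Rafi's subsurface-projection description of Teichm\"uller geodesics is the most natural tool for handling it.
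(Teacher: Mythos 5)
You should first note that the paper does not prove this statement at all: it is quoted from Masur \cite{m81} as motivation, and the paper's own contribution is the Outer Space analogue, proved by entirely different means. Judged on its own terms, your outline has a genuine gap that you yourself flag: the entire certification mechanism rests on thick-part quasi-convexity and on cocompactness of the $\MCG(S_g)$-action on a thick part, but the sets $U_n$ must range over a basis for all of $T_g/\MCG(S_g)$, including arbitrarily thin regions where neither ingredient applies. Deferring this to ``thin-part excursions synchronized via Rafi's subsurface projections'' is not a proof; it is exactly the hard part, and it is not clear the thick-part closing lemma survives a geodesic that is forced to make deep prescribed thin-part excursions. There is a second gap you do not flag: your induction replaces $[a_k,b_k]$ by $[a_{k+1},b_{k+1}]$ and re-certifies the old visits only ``with slightly weaker constants.'' Over infinitely many stages these errors accumulate, and the Arzel\`a--Ascoli limit inherits a visit to $U_n$ only if the degraded constants stay bounded (so you need summable perturbations, which you never arrange) and the visit to $U_n$ occurs at a parameter value that remains bounded as $k \to \infty$; otherwise the secured passage escapes to infinity in the limiting process.

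Masur's actual argument sidesteps both problems. He works in the unit cotangent bundle $Q_0/\MCG(S_g)$ and uses the density of closed orbits there (axes of pseudo-Anosov classes). The chaining is then done at the level of quadratic differentials: a geodesic whose defining differential is close to that of a closed orbit shadows that orbit for a prescribed time by continuity of the flow on compact time intervals, and \teich segments depend continuously on their endpoints, so one builds a nested sequence of segments with summably small perturbations whose limit visits every $U_n$. No thick/thin dichotomy enters, because the approximation never passes through thick-part fellow-traveling. This is also the template the present paper follows for $\out$: the role of ``closed geodesics are dense'' is played by Theorem D (density of Perron--Frobenius eigenvectors of positive transition matrices), and the chaining is carried out with fold paths built from Brun's algorithm. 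If you want to salvage your approach, you must either restrict the targets to a single thick part (which only gives density in the thick part of moduli space) or replace the closing lemma by the continuity-of-the-flow argument above.
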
 

Our main theorem is an $\out$ analogue of the above theorem. Some of the terms in the theorem are defined directly below its statement.

\begin{mainthmA}\label{main1}
For each $r \geq 2$, there exists a geodesic fold ray in the reduced Outer Space $\ros$ whose projection to $\ros/\out$ is dense. 
\end{mainthmA}

\begin{rk}\label{r:mainthmrk}
\begin{enumerate}
\item The reduced Outer Space $\ros$ is a subcomplex of the Outer Space $\os$, which consists of those graphs without separating edges (see Definition \ref{d:ros}). It is an equivariant deformation retract of $\os$. 
\item The metric on $\ros$ with respect to which the ray in Theorem A is a geodesic is the Lipschitz metric (see Definition \ref{d:LipMet}). It is an asymmetric metric (analogous to the Thurston metric  on \teich space \cite{ThurstonMetric}) that has proven to be very useful in the $\out$ context, e.g. \cite{BestBers}. 
\item Because of the asymmetry of the metric, our geodesics will always be ``directed geodesics," i.e. maps 
$\Gamma \from [0, \infty) \to \os$
such that $d(\Gamma(t), \Gamma(t')) = t'-t$ for $t'>t\geq 0$, but not necessarily for $t>t'$. 
\item A fold line is a special kind of geodesic in $\os$ (explicitly described in Definition \ref{FoldPath}) that is analogous to a \teich geodesic. 
\item Comparing between Theorem \ref{thm_Mas1} and Theorem A, one may notice that our theorem declares the existence of a ray in contrast with Masur's theorem which declares the existence of a geodesic. We could easily extend our ray to a bi-infinite geodesic. However, the density of the image of the ray will follow from techniques that we cannot extend to the backwards direction. 
\end{enumerate}
\end{rk}

Returning to Remark \ref{r:mainthmrk}(1), we note that for proving algebraic properties of $\out$, one may usually replace Outer Space with $\ros$. However, on the geometric side, it is not known whether or not $\ros$ is convex in any coarse sense. 

We hence pose two questions:

\begin{qst}
For each $r \geq 2$, does there exist a geodesic fold line in $\os$ that is dense in both directions in $\os/\out$?
\end{qst}

\begin{qst}
For each $r \geq 2$, is the reduced Outer Space $\ros$ coarsely convex? 
For example, given points $x,y \in \ros$ does there always exist a geodesic from $x$ to $y$ which is contained in reduced Outer Space?
\end{qst}

\subsection{The unit tangent bundle}

Masur obtained Theorem \ref{thm_Mas1} as a corollary of his study of the geodesic flow on \teich Space. 
The unit tangent bundle of \teich space is isomorphic to its unit cotangent bundle $Q_0$, which may be described explicitly as the space of unit area quadratic differentials on a closed surface $S_g$ of genus $g$. The geodesic flow on \teich space is a $\MCG(S_g)$ invariant action of $\RR$ on $Q_0$. For $t \in \RR$ we denote the flow by $T_t:Q_0 \to Q_0$. 
Given a quadratic differential $q \in Q_0$, the set of points $\{ q_t \}_{t \in \RR}= \{ T_t q \}_{t \in \RR}$ defines a geodesic in the \teich space $T_g$. 

\begin{thm}[\cite{m81}]\label{thm_Mas2}
For a closed surface $S_g$ of  genus $g>1$, 
there exists a quadratic differential $q \in Q_0$ so that the projection of $\{q_t\}$ for either $t>0$ or $t<0$ is dense in $Q_0/\MCG(S_g)$.
\end{thm}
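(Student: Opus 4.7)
The plan is to establish forward topological transitivity of the \teich flow on $Q_0/\MCG(S_g)$ and then extract a dense forward orbit by a Baire category argument. Fix a countable basis $\{U_n\}_{n\geq 1}$ for the topology of $Q_0/\MCG(S_g)$ and let $\pi\from Q_0 \to Q_0/\MCG(S_g)$ denote the quotient map. For each $n$, set
\[
D_n = \{q \in Q_0 : T_t q \in \pi^{-1}(U_n) \text{ for some } t > 0 \} = \bigcup_{t>0} T_{-t}\bigl(\pi^{-1}(U_n)\bigr).
\]
Each $D_n$ is open since each $T_t$ is a homeomorphism, and any $q \in \bigcap_n D_n$ has a forward orbit meeting every $U_n$, hence projects to a dense forward orbit in $Q_0/\MCG(S_g)$. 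It therefore suffices to prove that this intersection is nonempty.

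First I would verify that each $D_n$ is dense. The key input is the density in $Q_0/\MCG(S_g)$ of axes of pseudo-Anosov mapping classes (Thurston). Given a nonempty open $V\subseteq Q_0$, one selects a pseudo-Anosov $\phi$ whose invariant quadratic differential $q_\phi$ satisfies $\pi(q_\phi) \in \pi(V)$ and whose forward $\RR$-orbit projects to a closed loop, together with a second pseudo-Anosov $\psi$ whose axis meets $U_n$. Using the local product structure of the \teich flow given by the strong stable/unstable horocyclic foliations, one perturbs a point of $V$ along its strong unstable leaf so that the perturbed forward orbit first shadows the closed orbit of $\phi$, then, via a transverse intersection of an unstable leaf with a stable leaf of $\psi$, transitions to shadow the closed orbit of $\psi$ and ultimately enters $\pi^{-1}(U_n)$. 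This produces a point of $D_n$ arbitrarily close to the chosen point of $V$.

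Because $Q_0/\MCG(S_g)$ is not compact, the Baire step is carried out in the open preimage of the $\eps$-thick part of moduli space, which is locally compact by Mumford's compactness theorem and is the set to which generic Teichm\"uller orbits return by Masur's recurrence criterion. Inside this locally compact open set each $D_n$ is open and (after adjusting the transition construction of the previous paragraph to land inside the thick part) dense, so by the Baire category theorem $\bigcap_n D_n$ is dense in that set and in particular nonempty. Any $q$ in the intersection witnesses the theorem, and the same argument run in reverse time handles the $t<0$ alternative.

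I expect the main obstacle to be the transition step in the second paragraph: one must steer the orbit from a neighborhood of one periodic axis to a neighborhood of another without drifting into the thin part during the interim, which demands uniform estimates on return times, on the sizes of product boxes, and on the transversality of stable/unstable manifolds over the compact piece of moduli space in play. Masur's original proof treats this via the explicit Teichm\"uller geometry of quadratic differentials and a careful analysis of recurrence; a more modern packaging would invoke a specification-type property for the \teich flow on the thick part.
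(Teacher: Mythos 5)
First, a point of order: the paper does not prove this statement. Theorem \ref{thm_Mas2} is quoted from Masur \cite{m81} purely as background motivation for the authors' Theorems A and B, so there is no internal proof to compare your attempt against. Judged on its own terms, your Baire-category skeleton is sound as far as it goes: $Q_0$ is a second countable manifold, hence already a Baire space (you do not need to retreat to the thick part for the Baire step, only possibly for uniform estimates in the shadowing), each $D_n$ is open, and any point of $\bigcap_n D_n$ projects to a forward-dense orbit in $Q_0/\MCG(S_g)$.

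The gap is that essentially all of the mathematical content of the theorem sits in the density of each $D_n$, and your second paragraph names the ingredients rather than supplying them. Two substantial inputs are assumed. (i) Density of pseudo-Anosov axes in $Q_0/\MCG(S_g)$ — density in the quotient of the unit cotangent bundle, not merely density of the corresponding closed geodesics in moduli space — is itself a nontrivial theorem; a large part of \cite{m81} is devoted to it, via density of rational measured foliations and Thurston's classification. (ii) The transition step — steering an orbit from a neighborhood of one periodic orbit to a neighborhood of another via stable/unstable leaves — is precisely a closing/specification-type property for the \teich flow, and it does not follow from soft local-product-structure considerations: the flow is not uniformly hyperbolic, and contraction along the "stable" direction (deforming the vertical foliation) is controlled only along suitably recurrent trajectories. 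You correctly flag this as the main obstacle, but flagging it is not closing it. It is worth noting that Masur's actual argument, and the Outer Space analogue carried out in this paper (Theorems C and B), avoids the Baire packaging entirely: one enumerates a countable dense family of closed orbits (in the paper, loops attached to Perron--Frobenius eigenvectors of Brun matrices), explicitly concatenates the corresponding loop segments into a single ray, and verifies allowability and the geodesic property by hand, with density following because the ray revisits arbitrarily small neighborhoods of each closed orbit. That constructive route is what your outline would need to emulate to become a proof.
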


The analogue of Theorem \ref{thm_Mas2} is not obvious, as it is unclear what  
the unit tangent bundle should be. In Outer Space the analogues of the following facts are false:
\begin{enumerate}
\item If $\Gamma_1, \Gamma_2$ are two bi-infinite \teich geodesics satisfying that there exist non-degenerate intervals $I,I' \subset \RR$ with $\Gamma_1(I) \subset \Gamma_2(I')$ then, up to reparametrization, $\Gamma_1 = \Gamma_2$. 

\item There exists a compactification $\overline{T_g}$ of $T_g$ %(by projective measured foliations) 
such that for any two points $x,y \in \overline{T_g}$ there exists a geodesic from $x$ to $y$. 
\end{enumerate}

The failure of (1) is an impediment to a ``local" description of the tangent bundle. This failure is
similar to its failure in a simplicial tree. 
Two geodesics in Outer Space can meet for a period of time and then diverge from each other (or even alternately meet and diverge). 
On the other hand, the failure of (2) is an impediment to a ``global" description in terms of the boundary of Outer Space. One may want to use equivalence classes of rays emanating from a common base-point.
However, we show in \S \ref{AppendixBoundary} that there are points on the boundary i.e. $\overline{\os} - \os$ that are not ends of geodesic fold rays.  
(While this result may be known to the experts, to our knowledge this is the first time it appears in print).
Thus one would first have to identify the visual boundary as a subset of $\overline{\os} - \os$ and this has yet to be done. 

For the purposes of this paper, we propose the following analogue of the unit tangent bundle. 
Given a point $x \in \ros$ there are finitely many germs $[\al]$ of fold lines $\al$ in $\ros$ initiating at $x$. Define 
\[ \uros = \{ (x,[\al]) \mid x \in \ros, \al \text{ is a fold line with } \al(0)=x \text{ and } \im(\al) \subset \ros \}. \]
Given a geodesic ray $\gamma \from [0, \infty) \to \ros$, for $t \in \RR$ we denote by $\gamma_t$ the path $\gamma_t(s) = \gamma(t+s)$. We may associate to $\gamma$ a path in the unit tangent bundle $\tilde \gamma(t) = (\gamma(t), [\gamma_t])$. We prove:

\begin{mainthmB}
For each $r \geq 2$, there exists a Lipschitz geodesic fold ray $\tilde \gamma \from [0, \infty) \to \ros$ so  that the projection of $\tilde \gamma$ to $\uros/\out$ is dense. 
\end{mainthmB}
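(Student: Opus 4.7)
The plan is to upgrade the construction behind Theorem A by refining the target set. Since at any $x \in \ros$ there are only finitely many germs $[\alpha]$ of fold rays starting at $x$ and remaining in $\ros$, and since $\ros/\out$ is separable, the unit tangent bundle $\uros/\out$ admits a countable dense sequence $\{(x_n,[\alpha_n])\}_{n\in\NN}$. I would construct $\tilde\gamma$ inductively so that for each $n$ there is a time $t_n$ at which the projection of $\tilde\gamma(t_n)$ lies within $\eps_n$ of $(x_n,[\alpha_n])$ in $\uros/\out$, with $\eps_n \to 0$.

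The inductive step has two sub-steps. \emph{First}, I would use the machinery already developed for Theorem A -- positive automorphisms of $F_r$ obtained via Brun's unordered algorithm, whose transition matrices approximate prescribed Perron--Frobenius eigenvectors -- to extend the already-built initial segment of $\gamma$ by a fold path whose endpoint lies, up to an $\out$-translation $g_n$, within $\eps_n$ of $x_n$. \emph{Second}, after identifying basepoints via $g_n$, I would extend the ray further by the first few folds of a chosen fold-ray representative of $\alpha_n$. Because the set of germs at a fixed point is discrete and combinatorial, once basepoints are identified, executing the initial fold sequence of $\alpha_n$ forces the germ of the tail of $\gamma$ at time $t_n$ to agree with $[\alpha_n]$ in $\uros/\out$, up to the basepoint approximation from the first sub-step.

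The main obstacle is ensuring that these two enrichments concatenate into a genuine fold ray, and that after performing the germ-matching folds we can still launch into the positive-automorphism segment that will approximate $(x_{n+1},[\alpha_{n+1}])$. The difficulty is that the construction of Theorem A is cleanest when each stage starts from a fixed combinatorial type (e.g.\ the standard rose), whereas the germ-matching folds produce a possibly non-rose graph. Resolving this amounts to showing that from any graph in $\ros$ one can reach any target point by a fold path realized by positive automorphisms after a suitable change of marking; this follows from the fact that every graph in $\ros$ folds onto a rose and from the equivariance of the whole construction under $\out$. Because the Lipschitz metric is additive along fold paths, the concatenation of all these segments remains a directed geodesic and yields the desired ray $\tilde\gamma$.
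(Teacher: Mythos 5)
Your overall strategy -- enrich the Theorem A construction by a countable dense set of targets in $\uros/\out$ and arrange that the ray realizes, near each target point, the fold of the prescribed germ -- is the same strategy the paper uses. But two steps you treat as automatic are in fact the main content, and one of them is asserted incorrectly.

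First, the claim that ``the Lipschitz metric is additive along fold paths'' is false, and this is where the argument breaks. A concatenation of fold segments is a geodesic only if there is a single conjugacy class whose realization stays legal and in the tension graph across \emph{every} segment (Propositions \ref{witness} and \ref{geodTestProp}, leading to Proposition \ref{foldLineGeodesics}); otherwise the witness of one segment can be made illegal by the next fold and the triangle inequality is strict. The paper secures a persistent witness by making every single fold in the ray direction matching with respect to a coherent orientation (Corollary \ref{PositiveFoldLineGeodesics}): then the loop of a positive generator remains positive and legal forever. Your germ-matching sub-step, ``extend the ray by the first few folds of a chosen fold-ray representative of $\alpha_n$,'' imposes no such constraint, so the resulting concatenation need not be geodesic. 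This is why the paper spends Sections \ref{S:dgAll}--\ref{S:ContinuityCloseness} building \emph{positive} rose-to-rose fold lines through a given generic point $y$ that execute a prescribed turn $\{E,E'\}$ immediately after $y$ (Proposition \ref{existenceRtRFoldLines}): the germ data is encoded by which turn is folded next, and the whole detour must consist of direction-matching folds so the global witness survives. The same positivity is also what keeps the ray inside $\ros$ (transitivity of the oriented graphs, Observation \ref{ObsContInd}), a point you do not address.

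Second, allowability of the infinite sequence. Once the ray's initial metric $w_0$ is fixed, every later metric is determined by the fold matrices, so you cannot adaptively append folds and hope they are allowable; you must choose $w_0$ in the intersection of the nested cones $D_1\cdots D_i(\RR^r_+)$ of the \emph{entire} prearranged sequence, and that intersection is controlled only because positive matrices (high powers of Brun matrices $A_{v_i}$) are interleaved throughout (Lemma \ref{l:DefiningMetric}, Corollary \ref{CorInitialPoint}). This forces the paper to fix all combinatorial data in advance -- all rational proper rose-to-rose fold lines ending at PF eigenvectors, interleaved with powers $A_{v_i}^n$ with $n>n(i,j)$ chosen so that the endpoint lands in the neighborhood $U_{ij}$ where the next fold line is allowable (Lemma \ref{AllowableNbhd}) -- and then to recover density not by hitting targets exactly but via the continuity statements of Proposition \ref{contOfRtRFoldLines} and Proposition \ref{ApproxByRRtRFoldLines}, which let the fold sequence allowable at $x$ be transferred to the nearby point actually visited by the ray. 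Your proposal gestures at ``every graph folds onto a rose'' and ``equivariance,'' but without the positivity and the advance choice of $w_0$ the concatenation is neither guaranteed to be allowable nor geodesic.
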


\subsection{Geodesics in other subcomplexes}

\smallskip 
For each $r \geq 2$, we define the \emph{theta subcomplex} $\mathcal{T}_r$ to be the subspace of $\ros$ consisting of all points in $\os$ whose underlying graph is either a rose or a theta graph, see Figure \ref{thetaGraph1}. 
This subcomplex carries the significance of being the minimal subcomplex containing the image of the Cayley graph under the natural map.
Both as a warm-up, and for its intrinsic significance, we initially prove Theorem A in $\mathcal{T}_r$.

\begin{figure}[h]
\centering
\includegraphics%[width=.75in]
{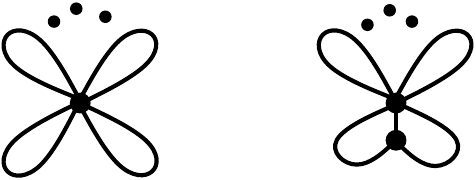}
\caption{ The\label{thetaGraph1} underlying graphs of the simplices of $\mathcal{T}_r$. The graph on the left is called a \emph{rose} and that on the right is called a \emph{theta graph}.}
\end{figure}

\begin{mainthmC}\label{theta_comp_thm}
For each $r \geq 2$, there exists a fold line in $\mathcal{T}_r$ that projects to a Lipschitz geodesic fold ray in $\mathcal{T}_r/\out$. 
\end{mainthmC}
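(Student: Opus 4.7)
The plan is to construct the ray as a concatenation of fold segments obtained by iterating positive automorphisms whose ``train track axes'' can be realized inside the theta subcomplex $\mathcal{T}_r$, choosing the automorphisms so that their Perron--Frobenius eigendirections approximate a prescribed dense sequence of rose points. Because every non-rose graph in $\mathcal{T}_r$ is a theta, a fold path staying in $\mathcal{T}_r$ alternates between roses and thetas: folding one edge of a rose along an initial segment of another edge yields a theta, which completes via a further fold to a new rose. I would first catalog enough such elementary moves to realize an arbitrary positive integer transition matrix by a positive automorphism $\phi$ whose entire fold line lies in $\mathcal{T}_r$.

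Next, I would fix an enumeration $\{x_n\}$ of a countable dense subset of rose points in $\mathcal{T}_r/\out$. Invoking the density-of-Perron--Frobenius-eigenvectors statement advertised in the abstract (which in turn follows from the weak convergence of Brun's unordered algorithm applied to the positive transition matrices made available above), for each $n$ I would pick a positive automorphism $\phi_n$, realizable in $\mathcal{T}_r$, whose P--F eigenvector, read as a length vector on a rose, lies within $1/n$ of the length coordinates of $x_n$. Iterating $\phi_n$ for a sufficiently large number $k_n$ of steps on a fixed rose yields a fold segment inside $\mathcal{T}_r$ whose terminal vertex, after translation by a suitable element of $\out$, lies within $1/n$ of $x_n$ in the quotient.

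The principal technical step is the concatenation of these segments into a single fold ray. Between segment $n$ and segment $n{+}1$ I must interpolate a finite transitional sequence of folds inside $\mathcal{T}_r$ that reconciles the markings at the endpoints and, crucially, introduces no backtracking, so that the resulting path is a genuine fold ray rather than merely piecewise-fold. The main hurdle is to ensure that the projection to $\mathcal{T}_r/\out$ is still a Lipschitz geodesic: one must preclude identifications $\gamma(s) \sim_{\out} \gamma(t)$ with $s<t$ that would furnish a shortcut in the quotient. I would control this by selecting the $\phi_n$ with pairwise inequivalent invariant directions and then taking each $k_n$ so large that the Lipschitz length accumulated inside the $\phi_n$-segment strictly dominates any candidate shortcut passing through the gluing, which also guarantees that the segments are traversed in order by the projected geodesic. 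Density of the projection in $\mathcal{T}_r/\out$ then follows at once from the density of $\{x_n\}$.
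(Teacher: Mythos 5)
Your overall strategy (approximate a dense family of roses by Perron--Frobenius eigenvectors of positive automorphisms realizable in $\mathcal{T}_r$, iterate each one many times, concatenate) is the same skeleton as the paper's proof, but three essential steps are missing or misidentified. First, \emph{allowability}: you cannot ``iterate $\phi_n$ on a fixed rose'' independently for each $n$, because a combinatorial fold of $e_i$ over $e_j$ is only permitted at a metric rose where $\ell(e_i)\ge\ell(e_j)$, and the metric at which block $n{+}1$ starts is dictated by the end of block $n$. The entire infinite fold sequence must be allowable from a \emph{single} initial length vector; the paper obtains this vector as a point of the nested intersection $\bigcap_i D_1\cdots D_i(\RR^r_+)$, nonempty because the blocks are positive matrices (Lemma \ref{l:DefiningMetric}, Corollary \ref{CorInitialPoint}). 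Your proposal never addresses this, and it is exactly item (I) of the paper's outline.

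Second, \emph{geodesy}: your mechanism (make each block so long that no shortcut through a gluing is shorter) is not how one proves the concatenation is a geodesic, and it would be hard to make rigorous since the Lipschitz metric admits many concatenations of geodesic segments that are not geodesics. The correct mechanism is a single witness conjugacy class that is legal for every fold map along the ray: because every fold is direction matching, a positive loop stays positive and legal forever, and Proposition \ref{geodTestProp} then gives additivity of distances (Corollary \ref{PositiveFoldLineGeodesics}). Note also that the property proved is geodesy \emph{upstairs} in $\os$; the quotient-shortcut issue you raise is not what the theorem asserts. Third, \emph{density}: a dense set of \emph{rose} points is a positive-codimension subset of $\mathcal{T}_r/\out$, so passing near all of them does not give density in the interiors of the theta simplices. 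The paper fixes this by inserting, before arbitrarily high powers of each $g_{v_i}$, every elementary fold $h_j$ (the ``snake'' sequence (\ref{e:snake})): a point $a$ interior to a theta simplex lies on a basic fold segment $[x,z]$ ending at a rose $z$, and by continuity of the unfolding in its endpoint, any ray segment that enters a rose $\eps'$-close to $z$ via the matching combinatorial fold sweeps within $\eps$ of $a$. Without controlling \emph{which} fold is performed adjacent to the approximating rose, your final sentence (``density \ldots follows at once'') does not follow.
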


\subsection{Outline}

We begin by outlining the proof of Theorem C. After proving Theorem C, we develop the topological machinery necessary to extend the more combinatorial arguments of the proof of Theorem C to the proof of Theorem B (and Theorem A as a corollary).

Recall that points in Outer Space are marked metric graphs 
equivalent up to homotopy, see Definition \ref{d:markedmetricgraph}. As described in Definition \ref{d:OutAction}, $\out$ acts on the right by changing the marking. To a point $x \in \mathcal{T}_r/\out$,  one may associate a positive vector, the ``length vector" recording the graph's edge lengths. The folding operation may be translated to a matrix recording the change in edge lengths from its initial point to its terminal point.
In this dictionary, a fold ray in $\mathcal{T}_r/\out$ should correspond to an initial vector and a sequence of fold matrices.
However, not every such sequence comes from a fold ray:  
a particular fold may or may not be allowed for a specific vector depending on whether its image is again a positive vector.
Our challenge is to construct a sequence of fold matrices
$\{T_k\}_{k=1}^\infty$ 
satisfying that, for some positive vector $w_0$, if we write $w_i = T_{i} \cdots T_1(w_0)$ for each $i \in \NN$ then 
\begin{itemize}
\item[I.] \label{allowable333} the fold $T_{i+1}$ is allowed in $w_i = T_{i} \cdots T_1(w_0)$ for each integer $1 \leq i < \infty$, 
\item[II.] \label{dense333} the set of vectors $w_i$ is projectively dense in a simplex and, 
\item[III.] \label{geodesic333} the corresponding fold ray is a geodesic ray in the Lipschitz metric.
\end{itemize}

In order to prove item (II) of the list, we prove the following fact: 

\begin{mainthmD}\label{thm_PF_dense}
For each $r \geq 2$, let $S_{l_1}^r$ be the set of unit vectors according to the $l_1$ metric in $\RR^r_+$. The set of Perron-Frobenius eigenvectors of matrices arising as the transition matrices of a positive automorphisms in $\text{Aut}(F_r)$ is dense in the $S_{l_1}^r$.
\end{mainthmD}

The proof of Theorem D, in \sref{s:BrunsAlgorithm}, uses Brun's algorithm \cite{b57}. We also prove in \sref{s:BrunsAlgorithm} that Brun's (unordered) algorithm converges in angle, a result to our knowledge previously absent from the literature in dimensions higher than four. (Brun proved it in \cite{b57} for dimensions three and four.)  

To construct the sequence $\{T_k\}$ of fold matrices we enumerate the powers of ``Brun matrices" (see \sref{s:BrunsAlgorithm}):
$P_1, P_2, P_3, \dots$.
To each $P_i$, we can attach the following data:
\begin{itemize}
\item a positive Perron-Frobenius eigenvector $v_i$, 
\item a positive automorphism $g_i \in Aut(F_r)$, also denoted $g_{v_i}$, so that the transition matrix of $g_i$ is $P_i$, 
\item and a decomposition of $P_i$ into fold matrices, arising from the decomposition of $g_i$ into Neilsen generators which correspond to moves in Brun's algorithm.
\end{itemize}

We remark that this method is reminiscent of Masur's paper, where he proved the existence of a dense geodesic in $R_g = Q_0/\MCG(S_g)$ using the fact that closed loops in $R_g$ are dense. 
The resemblance stems from the facts that the decomposition in the third item defines a loop in $\ros/\out$ based at $v_i$ and that the density,  established by Theorem~D, of the set of Perron-Frobenious eigenvectors $\{v_i\}$'s. 

We concatenate the fold sequences associated to the matrices $P_i$ together to form the sequence $\{T_k\}_{k=1}^\infty$. We address item  (I) on the list, i.e. the allowability of the sequence, in Lemma \ref{BasePoint}. We now have a fold ray through the points $\{w_j\}_{j=1}^\infty$. 

For density of the geodesic ray we use the automorphisms $g_i$ related to the $P_i$. By ensuring that arbitrarily high powers of these automorphisms (hence matrices) occur in the sequence, we ensure that the ray passes through points with length vectors arbitrarily close to the dense set of eigenvectors.

Finally, property (III) on the list, that the fold line is a Lipschitz geodesic, follows from the fact that every $g_i$ is a positive automorphism (see Corollary \ref{PositiveFoldLineGeodesics}). 

To extend our proof of Theorem C to Theorems A and B, we prove that for a generic point $y$ in reduced Outer Space, there exist roses $x,z$ and a ``positive" fold line $[x,z]$ remaining in reduced Outer Space and so that $y \in [x,z]$. 
Here by ``positive" we mean that the change of marking from $x$ to $z$ is a positive automorphism. Moreover, if $G$ is the underlying graph of $y$ and $E,E'$ are two adjacent edges in $G$, then one may choose $[x,z]$ so that it contains the fold of the turn $\{E,E'\}$ immediately after the point $y$. This construction is carried out in \sref{S:dgAll} and elevates the geodesic's density in $\ros/\out$ to density in $\mathcal{U}_r$. Additionally, the geodesic $[x,z]$ varies continuously as a function of $y$, as we prove in \sref{S:ContinuityCloseness}. Thus, one may adjust the previous argument to prove Theorems A and B, which we do in \sref{S:RayConstruction}.

\subsection*{Acknowledgements}
 
We wish to thank Pierre Arnoux for bringing our awareness to Brun's algorithm and its possible use in proving that the Perron-Frobenius eigenvectors are dense, as well as other helpful discussions. On this subject we are also heavily indebted to Jon Chaika who pointed out to us that Brun's algorithm is ergodic and that we could use this to elevate our proof of the Perron-Frobenius eigenvectors being dense in a simplex to additionally prove that Brun's algorithm converges in angle (also somewhat simplifying our proof that the Perron-Frobenius eigenvectors are dense in a simplex). We would further like to thank Jayadev Athreya for posing the question and helpful discussions.  Inspirational ideas given by Pascal Hubert were also particularly valuable. We are indebted to Lee Mosher for pointing out that Keane has a paper on a complication we were facing with regard to Brun's algorithm. And we are indebted to Fritz Schweiger for his generosity in helping us understand arguments of his books. Finally, we are indebted to Val\'erie Berth\'e, Mladen Bestvina, Kai-Uwe Bux, Albert Fisher, Vincent Guirardel, Ilya Kapovich, Amos Nevo, and Stefan Witzel for helpful discussions.

\tableofcontents

\section{Definitions and background}{\label{s:Dfs}}

\subsection{Outer Space and the action of $\out$}

Culler and Vogtmann introduced Outer Space in \cite{cv86}. Points of Outer Space are ``marked metric graphs.''

\begin{df}[Graph, positive edges]
A \emph{graph} will mean a connected 1-dimensional cell complex. $V(G)$ will denote the vertex set and $E(G)$ the set of unoriented edges. 
The degree of a vertex $v \in V(G)$ will be denoted $deg_G(v)$, or $deg(v)$ when $G$ is clear.

For each edge $e \in E(G)$, one may choose an orientation. Once the orientation is fixed that oriented edge $E$ will be called \emph{positive} and the edge with the reverse orientation $\ol{E}$ will be called \emph{negative}. 
 Given an oriented edge $E$,  $i(E)$ will denote its initial vertex and $ter(E)$ its terminal vertex.
A \emph{directed} graph is a graph $G$ with a choice of orientation on each edge $e \in E(G)$, we call this choice an \emph{orientation} on $G$.
\end{df}

Given a free group $F_r$ of rank $r \geq 2$, we choose once and for all a free basis $A=\{X_1, \dots, X_r\}$. Let $R_r = \vee_{i=1}^r \SS^1$ denote the graph with one vertex and $r$ edges, we call this graph an \emph{r-petaled rose}. 
We choose once and for all an orientation on $R_r$ and  
identify each positive edge of $R_r$ with an element of the chosen free basis. Thus, a cyclically reduced word in the basis corresponds to an immersed loop in $R_r$.

\begin{df}[Marked metric $F_r$-graph]{\label{d:markedmetricgraph}} For each integer $r \geq 2$ we define a \emph{marked $F_r$-graph} to be a pair $x =(G,m)$ where:
\begin{itemize}
\item $G$ is a graph such that $deg(v) \geq 3$ for each vertex $v \in V(G)$.
\item $m \colon R_r \to G$ is a homotopy equivalence, called a \emph{marking}.
\end{itemize}

A \emph{marked metric graph} is a triple $(G,m,\ell)$ so that $(G, m)$ is a marked graph and:
\begin{itemize}
\item The map $\ell \colon E(G) \to \RR_+$ is an assignment of lengths to the edges. We require that $\sum_{e \in E(G)} \ell(e) = 1$. The quantity $vol(G) =  \sum_{e \in E(G)} \ell(e)$ is called the \emph{volume} of $G$.
\end{itemize}
 
\end{df}

\begin{rk}[A metric graph as a metric space]\label{metricStructure} The assignment of lengths to the edges does not quite determine a metric on $G$, but a homeomorphism class of metrics.
This choice is inconsequential in this paper.
\end{rk} 

Define an equivalence relation on marked metric $F_r$-graphs by $(G,m, \ell) \sim (G',m', \ell')$ when there exists an isometry $\varphi \colon (G, \ell) \to (G', \ell')$ so that $m'$ is homotopic to $\varphi \circ m$.

\begin{df}[Underlying set of Outer space]
For each $r \geq 2$, as a set, the (\emph{rank-$r$}) \emph{Outer Space} $\os$ is the set of equivalence classes of marked metric $F_r$-graphs.
\end{df}

\begin{rk}
On occasion we may think of graphs with valence-$2$ vertices as living in Outer Space by considering them equivalent to the graphs obtained by unsubdividing at their valence-$2$ vertices.
\end{rk}

\begin{df}\label{simplexDefn}
The \emph{simplex} $\sig$ in $\os$ corresponding to the marked graph $(G,m)$ is
$$\sig_{(G,m)} = \{ (G,m,\ell) \in \os \mid vol(G) =1 \}.$$
By enumerating the edges of $G$, we can identify $\sig_{(G,m)}$ with the open simplex
$$\calS_{|E|} = \left\{ \overrightarrow{v} \in \RR_+^{|E|} ~ \left| ~ \sum_{i=1}^{|E|} v_i = 1 \right. \right\}.$$ 
Here $E = E(G)$. We denote this identification by $n \from \sig_{(G,m)} \to \calS_{|E|}$. 
We call the open simplex corresponding to $(R_r,id)$ the \emph{base simplex} and denote it by $\sig_0$. 
\end{df}

Outer Space has the structure of a simplicial complex built from  open simplices (see \cite{v02}), faces of $\sig_{(G,m)}$ arise by letting the edges of a tree in $G$ have length 0.

\begin{df}[Simplicial metric]
Given a simplex $\sig_{(G,m)}$ in $\mathcal{X}_r$, the \emph{simplicial metric} on $\sig_{(G,m)}$ is defined by  $d_s(\ell,\ell') = \sqrt{\sum_{e \in E(G)}(\ell(e) - \ell'(e))^2}$, 
for $\ell,\ell' \in \sig_{(G,m)}$.
We also denote by $d_s$ the extension of this metric to a path metric on $\mathcal{X}_r$.
\end{df}

\begin{rk}\label{sameTopology}
In \S \ref{SS:Lipschitz} we define the Lipschitz metric on $\os$. The simplicial metric and Lipschitz metric on Outer Space differ in important ways. However, open balls with respect to the Lipschitz metric (in either direction, see  Remark \ref{LipschitzBalls}) contain open balls of the simplicial metric). Therefore, a set dense with respect to the simplicial topology will also be dense with respect to the Lipschitz topology. 
\end{rk}

\begin{df}[Unprojectivized Outer Space]{\label{d:uos}} \cite{cv86}
The \emph{(rank-$r$) unprojectivized Outer Space} $\uos$ is the space of metric marked $F_r$-graphs where $vol(G)$ is not necessarily 1. 

There is a map from 
$\uos$ to $\os$ normalizing the graph volume, i.e. 
\begin{equation}{\label{e:Projectionq}}
\begin{array}{rcl}
q: \RR^m_+ &\to& \mS_{m}\\
q(x_1, \dots, x_m) &=&  \left( \frac{x_1}{\sum_{i=1}^{m}x_i},  \dots, \frac{x_{m}}{\sum_{i=1}^{m}x_i} \right)
\end{array}
\end{equation} 
and taking the point $(G,\mu, \ell)$ to the point $(G, \mu, q(\ell))$. 

We call the full preimage under $q$ of a simplex in $\os$ an \emph{unprojectivized simplex}.
\end{df}

\begin{df}[Topological Outer Space]{\label{d:os}} \cite{cv86}
The topological space consisting of the set of equivalence classes of marked metric $F_r$-graphs, endowed with the simplicial topology, is called the (\emph{rank-$r$}) \emph{Outer Space} and is also denoted by $\os$.
\end{df}

\begin{df}[$\out$ action]{\label{d:OutAction}}
If $\Phi \in \text{Aut}(F_r)$ is an automorphism, let $f_\Phi \colon R_r \to R_r$ be a homotopy equivalence corresponding to $\Phi$ via the identification of $E(R_r)$ with the chosen free basis $A$ of $F_r$. We define a \emph{right action of $\out$} on $\os$. An outer automorphism $[\Phi] \in \out$ acts by $[G,m,\ell]\cdot [\Phi] = [G, m \circ f_\Phi, \ell]$.
\end{df}

\begin{df}[Reduced Outer Space $\ros$ and $\ms$]{\label{d:ros}}
For each integer $r \geq 2$, the \emph{(rank-$r$) reduced Outer Space} $\ros$ is the subcomplex of $\os$ consisting of precisely those simplices $\sig_{(G,m)}$ such that $G$ contains no separating edges. This space is connected and an $\out$-equivariant deformation retract of $\os$.
\end{df}

Let $\ms$ denote the quotient space of $\mathcal{RX}_r$ by the $\out$ action. Hence, $\ms$ contains a quotient of a simplex for each graph (no longer marked). Note that, as a result of graph symmetries, simplices in $\os$ do not necessarily project to simplices in $\ms$. Thus, $\ms$ is no longer a simplicial complex but a union of cells which are quotients of simplices in $\os$.

\subsection{Train track structures}

Much of the following definitions and theory can be found in \cite{bh92} or \cite{b12}, for example. However, it should be noted that some of our definitions, including that of an illegal turn, are somewhat nonstandard.

\begin{df}[Regular maps]
We call a continuous map $f \colon G \to H$ of graphs \emph{regular} if for each edge $e \in E(G)$, we have that $f|_{int(e)}$ is locally injective and that $f$ maps vertices to vertices.
\end{df}

\begin{df}[Paths and loops]\label{defPaths}
Depending on the context an \emph{edge-path} in a graph $G$ will either mean  a continuous map $[0,n] \to G$ that, for each $1 \leq i \leq n$, maps $(i-1,i)$ homeomorphically to the interior of an edge, or if the graph $G$ is directed, a sequence of oriented edges $e_1, \dots, e_n$ such that $ter(e_i)=i(e_{i+1})$ for each $1 \leq i \leq n-1$. We may on occasion also allow for $e_1$ and $e_n$ to be partial edges.
Given any path $\gamma=e_1 \cdots e_n$, we will denote its initial vertex, i.e. $i(e_1)$, by $i(\gamma)$ and its terminal vertex, i.e. $ter(e_n)$ by $ter(\gamma)$.

A \emph{loop} $\al$ in $G$ is the image of an immersion $\al \colon \SS^1 \to G$. We will associate to each loop an edge-path unique up to cyclic ordering.

In a directed graph $G$, we will call a path \emph{directed} that either crosses all edges in a positive direction (a \emph{positive path}) or crosses all edges in a negative direction (a \emph{negative path}). 
The operation of path concatenation will be denoted by $*$.
\end{df}

\begin{df}[Illegal turns and gates]
Let $f \colon G \to H$ be a regular map. Let $e,e' \in E(G)$ be oriented edges with the same initial vertex. We call $\{e,e'\}$ a \emph{turn}. We say a turn $\{e,e'\}$ is an \emph{illegal turn} for $f$ if the first edge of the edge-path $f(e)$ equals the first edge of the path $f(e')$. The property of forming an illegal turn is an equivalence relation and the equivalence classes are called \emph{gates}. 
\end{df}

\begin{df}[Train track structures]
Let $f \from G \to H$ be a regular map. If every vertex of $G$ has $\geq 2$ gates, then we call the partition of the turns of $G$ into gates a \emph{train track structure} and say that $f$ induces a \emph{train track structure} on $G$. An immersed path $\al: I \to G$ will be considered \emph{legal} with respect to a given train track structure if it does not contain a subpath $e_ie_j$ where  $\{\overline{e_i}, e_j \}$ is an illegal turn.
\end{df}

\begin{rk}
The image of a legal path will be locally embedded.
\end{rk}

\begin{df}[Transition matrix]
The \emph{transition matrix} of a regular self-map $f \from G \to G$ is the square $|E(G)| \times |E(G)|$ matrix $(a_{ij})$ such that $a_{ij}$, for each $i$ and $j$, is the number of times $g(e_i)$ passes over $e_j$ in either direction.\footnote{This matrix is the transpose of the transition matrix as Bestvina and Handel define it in \cite{bh92}, but this definition will have a stronger relationship with the change-of-metric matrix we define later.}

We define the \emph{transition matrix} for an element $\Phi \in Aut(F_r)$ to be the transition matrix of $f_{\Phi}$ (see Definition \ref{d:OutAction}).
\end{df}

\subsection{Perron-Frobenius theory}{\label{s:PFdense}}

We are particularly interested in positive matrices (defined below) because of their known properties (due to Perron-Frobenius theory) of contracting, the \emph{positive cone} $\RR^d_{+}=\{ v\in \RR^d \mid v_i>0, ~ i=1,\dots d \}$.

\begin{df}[Positive matrices, Perron-Frobenius eigenvalues and eigenvectors]
We call a matrix $A=[a_{ij}]$ \emph{positive} if each entry of $A$ is strictly positive.
By Perron-Frobenius theory, we know that each such matrix has a unique eigenvalue of maximal modulus and that this eigenvalue is real.
This eigenvalue is called the \emph{Perron-Frobenius (PF) eigenvalue} of $A$. It has an associated eigenvector whose entries are each strictly positive. We call the eigenvector with strictly positive entries and such that all entries sum to one the \emph{Perron-Frobenius (PF) eigenvector}.
\end{df}

\begin{df}[Weak convergence]
A sequence $\{A_k\}_{k=1}^{\infty}$ of $d \times d$ matrices, restricted to vectors in $\RR^d_{+}$, \emph{converges weakly} if the sequence $\{A_k({\RR}^d_{+})\}_{k=1}^{\infty}$ converges projectively to a point.
\end{df}

\begin{rk}
Perron-Frobenius theory also tells us that, for a positive matrix $M$, the sequence $\{M^k \}_{i=1}^{\infty}$ weakly converges to the line spanned by the PF eigenvector.
\end{rk}

\subsection{The Lipschitz metric}{\label{SS:Lipschitz}}

\begin{df}[Difference in markings]
Let $x =(G,m,\ell)$ and $y = (G',m',\ell')$ be two points in $\os$. Denote by $h \colon G \to R_r$ a homotopy inverse of $m$. \emph{A difference in markings} is a linear map $f \colon G \to G'$ homotopic to $m' \circ h$.
\end{df}

\begin{df}[Stretch]
Let $\al$ be a conjugacy class in $F_r$, equipped with a free basis $A$. By abuse of notation we may think of $\al$ as an immersed loop $\al \colon \SS^1 \to R_r$ in $R_r$ via the identification of the edges of $R_r$ with $A$. For $x = (G,m,\ell) \in \os$, let $\al_x$ denote the unique immersed simplicial loop in $G$ homotopic to $m(\al)$.

Given a conjugacy class $\al$ in $F_r$ and $x \in \os$, we define $l(\al, x)$ as the length of $\al_x$. 
(Notice that since $\al_x$ is a simplicial loop in $x$, its length does not depend on the particular metric structure chosen for $x$, see Remark \ref{metricStructure}). For $x,y \in \os$ define the \emph{stretch} of $\al$ from $x$ to $y$ as $\st{\al}{x}{y} := \frac{l(\al,y)}{l(\al,x)}.$
\end{df}

The following theorem is attributed to either White or Thurston. It can be found in \cite{AlgomKfirAxes}.

\begin{thm}{\label{t:lipstretch}}
Given a continuous map $f$ of metric spaces, let $Lip(f)$ denote the Lipschitz constant for $f$. Then for each pair of points $x,y \in \os$, we have
\begin{equation}{\label{e:lipstretch}}
\inf \{ \lip{f} \mid f \colon x \to y \text{ is a difference in marking } \} =
\sup \{ \st{\al}{x}{y} \mid \al \in F_r\} \text{.}
\end{equation}
Moreover, both the infimum and supremum are realized.
\end{thm}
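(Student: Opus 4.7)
The plan is to establish $\inf \geq \sup$ essentially by definition, and then to produce a common optimizer on both sides with value $L$.

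\emph{Easy direction.} Any difference of markings $f \colon G \to G'$ sends the geodesic representative $\alpha_x$ of a conjugacy class $\alpha$ to a loop in $G'$ freely homotopic to $\alpha_y$. Its length in $G'$ is at most $\Lip(f) \cdot l(\alpha, x)$, and $l(\alpha, y)$ is no larger than the length of any such loop representative, so $\st{\alpha}{x}{y} \leq \Lip(f)$. Taking sup over $\alpha$ and inf over $f$ yields $\sup \leq \inf$.

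\emph{Realizing the infimum.} Within the homotopy class of a difference of markings $G \to G'$, we may restrict attention to maps that are linear on each edge of $G$; such a representative is determined by the images of vertices in $G'$ together with the combinatorial edge-path labels of the images of edges. Fix any feasible combinatorial labeling realizing some finite Lipschitz constant $L_0$. The set of placements of vertex images in $G'$ compatible with this combinatorial data is a compact product of closed edges, and $\Lip(f)$ depends continuously on these placements, so a minimizer exists. Among the finitely many combinatorial types of labelings with Lipschitz bound at most $L_0$, pick one minimizing $\Lip(f)$. Call this $f$ and its Lipschitz constant $L$.

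\emph{Realizing the supremum.} Let the tension subgraph $\Delta \subset G$ be the union of edges on which the local affine stretch of $f$ equals $L$. I claim $\Delta$ admits a legal loop for $f$. At every vertex $v \in \Delta$, the tensioned edges leaving $v$ must belong to at least two gates of $f$; otherwise all maximally stretched edges at $v$ leave with a common initial direction $\epsilon$ in $G'$, and sliding $f(v)$ a small distance $\delta > 0$ along $\epsilon$ decreases the image length of each tensioned edge at $v$ by $\delta$, while changing the image length of any non-tensioned edge at $v$ by at most $\delta$. For $\delta > 0$ small enough, every edge at $v$ now has local stretch $< L$, so no newly tensioned edge is created at $v$; edges of $G$ disjoint from $v$ are unaffected. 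Thus we produce a linear representative $f'$ with $\Lip(f') \leq L$ and with strictly fewer tensioned edges than $f$. Since $\Delta$ is finite, iterating this reduction either drops $\Lip$ strictly below $L$ (contradicting optimality of $f$) or yields an optimizer whose tension subgraph has $\geq 2$ gates at every vertex. In that case one constructs a legal edge-trajectory in $\Delta$ by always exiting into a gate distinct from the one just entered; by finiteness of $E(\Delta)$, an oriented edge repeats, producing a legal loop $\alpha \subset \Delta$. For this loop, $f(\alpha_x)$ is locally injective, hence already reduced and freely homotopic to $\alpha_y$, so $l(\alpha, y) = L \cdot l(\alpha, x)$ and $\st{\alpha}{x}{y} = L$. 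Combined with the easy direction, both extrema equal $L$ and are attained.

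\emph{Main obstacle.} The delicate step is the vertex-perturbation argument that turns failure of the two-gate condition into a strict reduction of the tension subgraph without raising the Lipschitz constant. One must carefully track how moving $f(v)$ along a common gate affects both tensioned edges at $v$ (whose image length strictly decreases) and the other edges at $v$ (whose image length may grow by at most $\delta$), and then iterate cleanly across all single-gate vertices until only $\geq 2$-gated vertices remain. Verifying uniform control on $\delta$ and ruling out the creation of new tensioned edges is the technical heart of the proof.
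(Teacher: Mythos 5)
The paper does not prove this theorem at all: it is quoted as a known result attributed to White/Thurston with a citation to \cite{AlgomKfirAxes}, so there is no internal proof to compare against. Your argument is the standard proof of White's theorem (easy direction by tightening images of loops; existence of an optimal edge-linear map by compactness over finitely many combinatorial types; then the tension-graph/gate perturbation to extract a legal witness loop), and it is essentially correct. Two small points to make airtight: when perturbing $f(v)$, handle loop edges based at $v$ (both of whose directions lie at $v$, so the image length changes by up to $2\delta$ rather than $\delta$ --- the conclusion is unaffected for small $\delta$); and when closing up the legal edge-trajectory at a repeated oriented edge $e_i=e_j$, note that the closing turn $\{\overline{e_{j-1}},e_i\}=\{\overline{e_{j-1}},e_j\}$ is a turn already taken by the trajectory, hence legal, so the cyclic word is genuinely a legal loop.
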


\begin{df}[Lipschitz metric]\label{d:LipMet}
The \emph{Lipschitz metric} $d(x,y)$ is defined as the log of either of the quantities in Equation \ref{e:lipstretch}.
This function is not symmetric but satisfies the other axioms of a metric \cite{b12}.
\end{df}

A difference in marking that achieves the minimum Lipschitz constant of (\ref{e:lipstretch}) is called an \emph{optimal map}. A loop that achieves the maximum stretch is called a \emph{witness}. 
For each $x,y \in \os$ there exist optimal maps and witnesses. 

\begin{remark}\label{LipschitzBalls}
An open ball based at $x$ with radius $R$ is either
\[ B_{\rightarrow}(x,R) = \{ y \in \os \mid d(x,y) <R \} \text{  or  } B_{\leftarrow}(x,R) = \{ y \in \os \mid d(y,x) <R \}. \]
In either case, the simplicial topology is finer (or equal) to the Lipschitz topology. 
\end{remark}

For a given difference of marking $f$, the subgraph of $G$ where the Lipschitz constant is achieved is called the \emph{tension graph}, usually denoted $\Delta_f$. Notice that $f$ induces a train track structure on $\Delta_f$.
Proposition \ref{witness} gives one way to identify witnesses. 

\begin{prop}\cite{bf11}{\label{witness}}
Let $x$ and $y$ be two points in $\os$, let $f \from x \to y$ be a map, and let $\Delta_f$ be the tension graph of $f$. 
If $\Delta_f$ contains a legal loop, then $f$ is an optimal map and any legal loop in $\Delta_f$ is a witness.  Conversely, if $\al \subset x$ is a witness, then it is a legal loop in $\Delta_f$.
\end{prop}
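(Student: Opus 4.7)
The plan is to combine Theorem~\ref{t:lipstretch} with a short combinatorial observation about the relationship between legal loops and backtracking under $f$. My first step would be to establish: if $\gamma = e_1 \cdots e_n$ is a simplicial edge-loop in $G$, then the concatenation $f(e_1) * \cdots * f(e_n)$ is already an immersed loop in $G'$ if and only if $\gamma$ is legal. Indeed, the only place cancellation can occur in this concatenation is between consecutive $f(e_i)$ and $f(e_{i+1})$, and it happens precisely when the last edge of $f(e_i)$ is the reverse of the first edge of $f(e_{i+1})$, which by definition is exactly the condition that $\{\overline{e_i}, e_{i+1}\}$ be an illegal turn. In particular, if $\gamma$ is legal then $\ell(f(\gamma)) = \ell(\gamma,y)$.

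For the forward direction, given a legal loop $\alpha \subset \Delta_f$, I would use that each edge of $\alpha_x$ lies in the tension graph to conclude $\ell(f(\alpha_x)) = \lip{f}\cdot \ell(\alpha,x)$; by the preliminary step this equals $\ell(\alpha,y)$, so $\st{\alpha}{x}{y} = \lip{f}$. Theorem~\ref{t:lipstretch} then supplies the squeeze
\[
\st{\alpha}{x}{y} \;\le\; \sup_{\beta} \st{\beta}{x}{y} \;=\; e^{d(x,y)} \;=\; \inf_{g} \lip{g} \;\le\; \lip{f},
\]
whose two ends coincide, forcing equality throughout. This delivers both conclusions at once: $f$ attains the infimum so is optimal, and $\alpha$ attains the supremum so is a witness.

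For the converse, I would note that one must assume $f$ is optimal, so that $\lip{f} = e^{d(x,y)}$; otherwise a witness can sit entirely outside $\Delta_f$ and the statement fails. Under this assumption, if $\alpha$ is a witness then $\st{\alpha}{x}{y} = \lip{f}$, and the chain
\[
\ell(\alpha,y) \;\le\; \ell(f(\alpha_x)) \;\le\; \lip{f}\cdot\ell(\alpha,x)
\]
collapses to equalities because the outer terms agree. Equality on the right forces every edge of $\alpha_x$ to be stretched by exactly $\lip{f}$, hence to lie in $\Delta_f$, so $\alpha \subset \Delta_f$. Equality on the left forces $f(\alpha_x)$ to be immersed, so by the contrapositive of the preliminary step $\alpha_x$ contains no illegal turn, i.e.\ $\alpha$ is legal.

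The main obstacle in the whole argument is really just the bookkeeping in the first step, namely identifying the combinatorial meaning of legality in terms of non-cancellation under $f$; once that equivalence is in hand, both implications reduce to tracking which of the inequalities produced by Theorem~\ref{t:lipstretch} must be sharp.
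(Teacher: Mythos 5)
Your argument is correct, and it is the standard proof of this statement (the paper itself gives no proof, citing \cite{bf11}; the argument there is the same one you give: legality of a loop is exactly non-cancellation of the images of consecutive edges, and then both directions follow by squeezing the chain of inequalities from Theorem~\ref{t:lipstretch}). Your remark that the converse needs $f$ to be optimal is a fair reading of the statement --- as written, the ``conversely'' clause is meant to be taken under the standing hypothesis that $\Delta_f$ contains a legal loop, which by the first part forces optimality, so your added assumption is equivalent and harmless.
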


\begin{prop}\label{geodTestProp}
Let $f \colon x \to y$ and $g \colon y \to z$ be difference in markings maps. Let $\al$ be a conjugacy class in $F_r$ satisfying that $\al_x$ is $f$-legal and contained in $\Delta_f$ and that $\al_y$ is $g$-legal and contained in $\Delta_g$. Then
$d(x,z) = d(x,y) + d(y,z)$.
\end{prop}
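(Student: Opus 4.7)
The plan is to sandwich $d(x,z)$ between $d(x,y)+d(y,z)$ and itself, using the triangle inequality in one direction and the witness characterization of the Lipschitz metric in the other. The triangle inequality (which holds for the asymmetric Lipschitz metric) immediately gives $d(x,z) \le d(x,y)+d(y,z)$, so the entire content is in the reverse inequality.

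For that, I would first observe that the stretch of a conjugacy class is multiplicative along a composition of difference-in-marking maps, because the length of $\alpha$ at each intermediate point is computed the same way regardless of the map used to get there:
\[
\st{\alpha}{x}{z} \;=\; \frac{l(\alpha,z)}{l(\alpha,x)} \;=\; \frac{l(\alpha,y)}{l(\alpha,x)}\cdot \frac{l(\alpha,z)}{l(\alpha,y)} \;=\; \st{\alpha}{x}{y}\cdot \st{\alpha}{y}{z}.
\]
Next, I would invoke Proposition \ref{witness} twice. Since $\alpha_x$ is $f$-legal and lies in the tension graph $\Delta_f$, it is a legal loop in $\Delta_f$, so $f$ is optimal and $\alpha$ is a witness from $x$ to $y$; by Theorem~\ref{t:lipstretch} this gives $\st{\alpha}{x}{y} = \Lip(f) = e^{d(x,y)}$. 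The same argument applied to $g$ and $\alpha_y$ yields $\st{\alpha}{y}{z} = e^{d(y,z)}$.

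Combining these, $\st{\alpha}{x}{z} = e^{d(x,y)+d(y,z)}$. Then by Theorem~\ref{t:lipstretch} again, $e^{d(x,z)} = \sup_{\beta} \st{\beta}{x}{z} \ge \st{\alpha}{x}{z} = e^{d(x,y)+d(y,z)}$, so $d(x,z) \ge d(x,y)+d(y,z)$, completing the proof. There is essentially no obstacle here: the hypothesis has been tailored precisely so that a single conjugacy class $\alpha$ serves as a witness for both legs, and multiplicativity of stretch then forces the two geodesic segments to concatenate into a geodesic from $x$ to $z$.
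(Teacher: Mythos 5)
Your proof is correct and follows essentially the same route as the paper: both arguments use the legality and tension-graph hypotheses to show $\alpha$ is stretched maximally on each leg (so that, via Proposition \ref{witness}, it is a witness for both $d(x,y)$ and $d(y,z)$), deduce $\st{\al}{x}{z} = e^{d(x,y)+d(y,z)}$ by multiplicativity, and close with the triangle inequality. The only cosmetic difference is that the paper phrases the multiplicativity as a direct computation $l(\al,z) = \lambda_f\lambda_g\, l(\al,x)$ rather than as a telescoping of stretch factors.
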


\begin{proof}
Since $\al_x$ is contained in $\Delta_f$, the map $f$ stretches each edge of $\al_x$ by $\lambda_f = \lip{f}$. Moreover, since $\al_x$ is $f$-legal, $l(\al,y) = \lambda_f \cdot l(\al,x)$. Similarly, if $\lambda_g = \lip{g}$, then
$l(\al,z) = \lambda_g \cdot l(\al,y) = \lambda_f\lambda_g \cdot l(\al,x)$. 
Thus, $\st{\al}{x}{z} = \lambda_f\lambda_g$ and therefore $d(x,z) \geq \log(\lambda_f\lambda_g)$. By Proposition \ref{witness}, $\al_x$ and $\al_y$ are both witnesses, hence $d(x,y) = \st{\al}{x}{y} =  \log \lambda_f$ and $d(y,z) = \st{\al}{y}{z}= \log \lambda_g$. Thus, we have $d(x,z) \geq \log\lambda_f + \log \lambda_g =  d(x,y) + d(y,z)$. The triangle inequality gives us an equality.
\end{proof}

\section{Fold paths and geodesics}{\label{S:foldpaths}}

In this section we introduce fold lines and prove results that will allow us to construct Lipschitz geodesics from certain infinite sequences of nonnegative matrices (unfolding matrices).
\bigskip

\begin{df}[Unparametrized geodesic]\label{D:UnparametrizedGeodesic} Let $I \subset \RR$ be a generalized interval in $\RR$. An \emph{unparametrized geodesic} in $\os$ is a map $\Gamma \from I \to \os$ satisfying that: 
\begin{enumerate}
\item for each $s<r<t$, we have $d(\Gamma(r), \Gamma(t)) = d(\Gamma(r), \Gamma(s))+ d(\Gamma(s), \Gamma(t))$ and
\item there exists no nontrivial subinterval $I' \subset I$ and point $x_0 \in \os$ such that $\Gamma (t)=x_0$ for each $t \in I'$.
\end{enumerate}
\end{df}

\begin{remark}
If $\Gamma$ is an unparametrized geodesic then there exists a generalized interval $I'$ and a homeomorphism $h \from I' \to I$ so that $\Gamma \circ h$ is an honest directed geodesic, i.e. for all $s<t$, we have that $d(\Gamma \circ h(s) , \Gamma \circ h(t)) = t-s$.
\end{remark}

Again $A=\{X_1, \dots X_r\}$ will denote a fixed free basis of $F_r$.

\begin{df}[Fold automorphism]\label{FoldAutomorphism}
By a \emph{fold automorphism} we will mean a ``left Nielsen generator," i.e. an automorphism of the following form ($i \neq j$):
\begin{equation}{\label{foldsFromMatrix}}
\Phi_{ij}(X_k) = \left\{
\begin{array}{ll}
 X_jX_k & \text{for } k=i \\
 X_k & \text{for } k \neq i.
\end{array} \right.
\end{equation}
By notation abuse $\Phi_{ij}$ will also denote the map $R_r \to R_r$ that corresponds to the above automorphism after identifying its positively oriented edges with the basis $A$.
\end{df}

To a fold automorphism one can associate a matrix.

\begin{df}[(Un)folding matrix]\label{dfFoldingMatrices}
Let $i \neq j \in \{1,\dots , r\}$. Then the $(i,j)$ \emph{folding matrix} $T_{ij}$ has entries
$t_{kl}$, where

\begin{equation}\label{eqTmatrix}
t_{kl} = \left\{
\begin{array}{rl}
1 & \text{if } k = l\\
-1 & \text{if } (k,l)=(i,j) \\
0 & \text{otherwise}
\end{array}
\right.
\end{equation}

Notice that $T_{ij}$ is \underline{not} the transition matrix of $f_{ij}$, though it will relate to the change-of-metric matrix coming from a folding operation. 
The matrix $T_{ij}$ is invertible and we call $M_{ij} := T_{ij}^{-1}$ the $(i,j)$ \emph{unfolding matrix}. Notice that the entries of $M_{ij}$ are $m_{kl}$, where

\begin{equation}\label{eqTinverse}
m_{kl} = \left\{
\begin{array}{rl}
1 & \text{if } k = l \text{ or } (k,l) = (i,j)\\
0 & \text{otherwise.}
\end{array}
\right.
\end{equation}
Notice also that the nonnegative \emph{matrix} $M_{ij}$ is the transition matrix of $\Phi_{ij}$. We hence sometimes write $M( \Phi_{ij})$ for this matrix.
\end{df}

\begin{df}[A combinatorial fold]\label{CombinatorialFolds}
Let $G$ be a graph whose oriented edges are numbered. Let $(e_k, e_j)$ be a pair of distinct oriented edges with $i(e_k) = i(e_j)$. A combinatorial fold is a tuple $(G, (e_k, e_j), G', f)$ where $G'$ is a graph and $f \from G \to G'$ is a quotient map that identifies an initial segment of $e_k$ with an initial sement of $e_j$. 
\begin{enumerate}
\item when $f$ identifies part of $e_k$ with all of $e_j$ we will call it a \emph{proper full fold}. We will call this tuple ``folding $e_k$ over $e_j$." 
\item when $f$ identifies all of $e_k$ with all of $e_j$ we will call it a \emph{full fold} and say that ``$e_k$ and $e_j$ are fully folded."
\item when $f$ identifies a proper segments of $e_k$ and $e_j$ we will call it a \emph{partial fold} and say that ``$e_k$ and $e_j$ are partially folded."
\end{enumerate}
\textbf{Notation} We sometimes repress some of the data depending on the contex. We will denote the combinatorial fold by $f$, or  $G \rightharpoonup G'$, or $(G, (e_i, e_j))$ depending on which data we want to emphasize. 
\end{df}

\begin{df}[Direction matching folds]\label{DirectionMatchingFolds}
Let $G$ be an oriented graph. A combinatorial fold $(G,(e_i, e_j))$ is \emph{direction matching} if $e_1$ and $e_2$ are either both negative edges (we then call $f$ \emph{negative}) or both positive edges (we then call $f$ \emph{positive}).
\end{df}

\begin{obs}\label{directionMatchingIndOrientation}
Let $G$ be an oriented graph and $f \colon G \far H$ a direction matching combinatorial fold, then $f$ induces an orientation on the edges of $H$. Moreover, $f$ maps each positive edge of $G$ to a positive edge-path in $H$ (of simplicial length $\leq 2$). 
\end{obs}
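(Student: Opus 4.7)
My plan is to describe the edges of $H$ explicitly in terms of the edges of $G$, use the direction-matching hypothesis to orient each, and then verify the statement about positive edges by a case analysis. First I would observe that the edges of $H$ fall into three classes: (i) edges unaffected by $f$, which are in canonical bijection with edges of $G$ disjoint from $\{e_i,e_j\}$; (ii) the \emph{identified segment}, namely the image under $f$ of the identified initial portions of $e_i$ and $e_j$; and (iii) \emph{tail} edges, the images of the unidentified terminal portions of $e_i$ and $e_j$. The number of tails is $0$ in a full fold, $1$ in a proper full fold, and $2$ in a partial fold.

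Next I would define the induced orientation on each class. Edges of type (i) keep the orientation they had in $G$. Each tail of type (iii) inherits the orientation of its parent edge $e_i$ or $e_j$. For the identified segment, the direction-matching hypothesis is exactly what I need: both $e_i$ and $e_j$ either both point away from the common vertex $v = i(e_i) = i(e_j)$ as positive edges, or both point away from $v$ as negative edges. In the first case I orient the identified segment as positive in the direction pointing away from $v$, and in the second case as negative in that direction. This is well-defined precisely because the two folded edges agree in direction along the identified portion.

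For the second assertion, let $E$ be a positive edge of $G$. If $E$ is disjoint from $\{e_i,e_j,\bar e_i,\bar e_j\}$ then $f(E)$ is a positive edge of $H$ of length $1$. If $f$ is a positive fold and $E \in \{e_i,e_j\}$, then $f(E)$ is either the identified segment alone (full or, for $e_j$, proper full fold) or the concatenation of the identified segment with the corresponding tail (partial fold or, for $e_k$, proper full fold); by construction both pieces are positive and concatenate correctly at the endpoint of the identified segment, so $f(E)$ is a positive edge-path of length $\leq 2$. If $f$ is a negative fold and $E \in \{e_i,e_j\}$ (so $\bar E$ is one of the folded edges), I would apply the positive-fold analysis to $\bar E$ to conclude that $f(\bar E)$ is a \emph{negative} path of length $\leq 2$; reversing yields that $f(E)$ is a positive path of length $\leq 2$.

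I do not expect any real obstacle here; the statement is essentially a bookkeeping exercise, and the only place where something needs to be verified rather than defined is the well-definedness of the orientation on the identified segment, which is immediate from the definition of direction matching. The main care is simply to keep track of all six combinations of fold type (full, proper full, partial) with fold sign (positive, negative) and to check that in each case the described pieces concatenate into a directed edge-path rather than one that reverses direction at a midpoint.
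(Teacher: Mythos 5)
Your argument is correct and complete: the decomposition of $E(H)$ into unaffected edges, the identified segment, and tails, the observation that direction matching is exactly what makes the orientation on the identified segment well-defined, and the reversal trick for negative folds are all sound, and the length bound of $2$ falls out as you describe. The paper states this as an Observation with no proof at all, so there is nothing to compare against; your write-up is precisely the bookkeeping the authors leave to the reader.
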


\begin{df}[Allowable folds]\label{AllowableFolds}
Let $x_0 = (G,m,\ell)$ be a point in Outer Space. The full fold $(G,(e,e'))$ is \emph{allowable} in $x_0$ if the following two conditions hold:
\begin{enumerate}
\item $\ell(e) \geq \ell(e')$.
\item If $ter(e) = ter(e')$, then $\ell(e) > \ell(e')$. In this case this is a \emph{proper full fold}. 
\end{enumerate}

Suppose $G$ is a rose with its edges enumerated. Let $n_\tau \from \sig_{(G, m)}\to \calS_{|E|}$ be the induced homeomorphism. The set where $\Phi_{ij}$ is  allowable will be denoted by $\sig_{(G,m)}^{(i,j)}$. The target graph $G'$ is also a rose and $\Phi_{ij}$ induces an enumeration of the edges of $G'$ by declaring the edge $\Phi_{ij}(e_k)$ to be the $k^{th}$ edge for each $k \neq i$ and calling the remaining edge of $G'$ the $i^{th}$ edge. 
\end{df}

\begin{lem}\label{EnuLemma}
Let $\hat\sig_0$ be the unprojectivized base simplex, and let $\Phi$ be a fold automorphism. Assume that the edges of $R_r$ have been enumerated and let $n_0 \from \hat\sig_0 \to \calS_r$ be the induced identification. $\Phi \from \hat\sig_0 \to \hat\sig_1$ induces a new enumeration of the edges of the target rose and we get a new identification $n_1\from \hat\sig_1 \to \calS_r$. Then for each $x \in \hat\sig_1$ we have $n_1(x) = n_0(x \cdot \Phi^{-1})$ (as defined in Definition \ref{d:OutAction}). 
\end{lem}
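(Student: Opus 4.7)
The plan is to compare the combinatorial fold map $g \from R_r \to R_r$ underlying the simplex map $\Phi \from \hat{\sigma}_0 \to \hat{\sigma}_1$ with the homotopy equivalence $f_\Phi$ representing the automorphism $\Phi$, and to check that under the new edge enumeration on the target rose these two maps agree, so that the right action of $\Phi^{-1}$ undoes the fold at the level of labels. First I would identify $\hat{\sigma}_1$ concretely as the simplex of marked metric graphs of the form $(R_r, g, \mu)$, where $R_r$ is the target rose of the fold, $g$ is the fold map viewed as the marking, and $\mu \in \RR^r_+$ is the length vector recorded via the new enumeration from Definition \ref{AllowableFolds}; by construction $n_1(x) = \mu$ for such an $x$.

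Next, applying the right action from Definition \ref{d:OutAction} gives $x \cdot \Phi^{-1} = (R_r, g \circ f_{\Phi^{-1}}, \mu)$, since only the marking is altered while the underlying graph and lengths are left untouched. The core of the argument is then to verify that $g \circ f_{\Phi^{-1}}$ is homotopic to the identity when the source $R_r$ carries the standard enumeration and the target $R_r$ carries the new enumeration. Writing $\Phi = \Phi_{ij}$ and using Definition \ref{AllowableFolds} --- which labels the edge $\Phi_{ij}(e_k)$ as the $k$-th edge of the target rose for each $k \neq i$ and labels the remaining edge (the terminal segment of the folded $X_i$) as the $i$-th edge --- a direct reading shows that $g$ sends each $X_k$ with $k \neq i$ to the $k$-th target edge and sends $X_i$ to the concatenation of the $j$-th and $i$-th target edges, which is precisely the formula \eqref{foldsFromMatrix} defining $f_{\Phi_{ij}}$. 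Hence $g \simeq f_\Phi$, and so $g \circ f_{\Phi^{-1}} \simeq f_{\Phi \circ \Phi^{-1}} = \mathrm{id}$, placing $x \cdot \Phi^{-1}$ in the base simplex $\hat{\sigma}_0$ with coordinate vector $\mu$ under $n_0$.

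The main obstacle I expect is keeping the two enumerations on the target rose straight: the content of the lemma is essentially that the enumeration prescribed in Definition \ref{AllowableFolds} is the unique one under which the topological fold map matches the algebraic automorphism on the nose, and once this dictionary is recorded the length-vector identity $n_1(x) = n_0(x \cdot \Phi^{-1})$ is immediate.
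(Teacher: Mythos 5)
The paper omits a proof of Lemma \ref{EnuLemma} entirely (it is evidently regarded as a routine unwinding of Definitions \ref{d:OutAction} and \ref{AllowableFolds}), so there is no paper proof to compare against. Your argument is correct and is the natural verification: identify the marking of a point of $\hat\sig_1$ with the fold map $g$, observe that under the enumeration prescribed in Definition \ref{AllowableFolds} one has $g \simeq f_\Phi$, and then note that the right action by $\Phi^{-1}$ precomposes the marking with $f_{\Phi^{-1}}$, returning the marking to the identity while leaving the length vector $\mu$ unchanged, whence $n_0(x \cdot \Phi^{-1}) = \mu = n_1(x)$.
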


If a fold $(G, (e,e'))$ is allowable at a point $x_0$ in Outer Space, one can construct a  path $\{\hat x_t\}_{t \in [0, 1]}$ in unprojectivized outer space, called a ``fold path." This is done by identifying initial segments of $e$ and $e'$ of length $t\ell_0(e')$, for $0 \leq t \leq 1$. The quotient map $f_{t,0} \from x_0 \to x_t$ is a homotopy equivalence, as are the quotient maps $f_{t,s}$ for $0 \leq s \leq t \leq 1$. By projectivizing we get a path $\{x_t\}$ in Outer Space. 

\begin{df}[Basic fold paths]\label{BasicFoldPath}
Given an allowable fold as above,
the path $ \mF \from [0,1] \to \os$ defined by
$t \mapsto x_t$ is the \emph{fold path in $\os$ starting at $x_0$ and defined by folding $e$ over $e'$}.
The path $t \to \hat x_t$ will be called the \emph{(unprojectivized) basic fold path}. We will not always use the hat notation when discussing unprojectivized paths but will mention whether the image lies in $\os$ or $\uos$, if it is otherwise unclear. 
\end{df}

\begin{df}[Change-of-metric matrix]
Let $G,G'$ be graphs and assume we have numbered their oriented edges. Let $\Psi$ be a linear map from a subset of  the unprojectivized simplex $\hat{\sig}_{(G, \mu)}$ to the unprojectivized simplex $\hat{\sig}_{(G', \mu')}$. Then $\Psi$ may be represented by an $|E(G')| \times |E(G)|$ matrix. This matrix will be called the \emph{change-of-metric} matrix.   
\end{df} 

\begin{lem}\label{roseChange}
Let $\Phi$ be an allowable fold automorphism on the point $x_0 = (R_r, m, \ell)$ and suppose the edges of $R_r$ have been numbered so that $\Phi = \Phi_{ij}$. Let $x_1$ be the folded graph and suppose $\hat\sig_0$ and $\hat\sig_1$ are unprojectivized open simplices containing respectively $x_0$ and $x_1$. 
The change-of-metric matrix for the fold operation from $n_0(\hat \sig_{(R,m)}^{(i,j)})$ to $n_1(\hat \sig_{(R,\Phi \circ m)})$ is  the matrix $T_{ij}$ of Definition \ref{dfFoldingMatrices}.
\end{lem}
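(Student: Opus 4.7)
My plan is to compute the change-of-metric matrix directly by unpacking the fold operation on edge lengths. The point $x_0 = (R_r, m, \ell)$ lies in $\sig_{(R_r,m)}^{(i,j)}$, and since $e_i$ and $e_j$ are loops at the rose's basepoint $v$ one has $\mathrm{ter}(e_i) = \mathrm{ter}(e_j)$, so the allowability conditions of Definition \ref{AllowableFolds} force the strict inequality $\ell(e_i) > \ell(e_j)$. The corresponding fold is thus a proper full fold in the sense of Definition \ref{CombinatorialFolds}: the map $f \from R_r \to R_r'$ identifies an initial segment of $e_i$ of length $\ell(e_j)$ with all of $e_j$ in a direction-matching way. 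The endpoint of $e_j$ is identified with the point on $e_i$ where the fold terminates, and that point is collapsed onto $v$ itself, so the quotient $R_r'$ is again a rank-$r$ rose, with the uncollapsed tail of $e_i$ forming a new loop at $v$.

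The next step is to read off the edge lengths of $x_1$ under the enumeration of Definition \ref{AllowableFolds}. I would first verify that this enumeration is consistent with the fold map on $\pi_1$: the old edge $e_i$ is traced by the concatenation $e_j' \ast e_i'$ in $R_r'$, matching $\Phi_{ij}(X_i) = X_j X_i$, while for $k \neq i$ the edge $e_k$ maps isomorphically to a new loop, matching $\Phi_{ij}(X_k) = X_k$. Given this, one reads immediately that $\ell'(e_k') = \ell(e_k)$ for $k \neq i$ and $\ell'(e_i') = \ell(e_i) - \ell(e_j)$. Collecting these into the linear equation $n_1(x_1) = M \cdot n_0(x_0)$ produces a matrix $M$ with $1$'s on the diagonal, $-1$ in position $(i,j)$, and $0$ elsewhere; this is exactly the matrix $T_{ij}$ of equation \ref{eqTmatrix}.

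The main obstacle I expect is purely bookkeeping: aligning the orientation convention on $R_r$, the direction of the fold (which edge is absorbed into which), and the new edge enumeration on $R_r'$ so that they are mutually consistent with the specific form $\Phi_{ij}(X_i) = X_j X_i$. The consistency check in the previous paragraph is the key sanity test, because an incorrect direction of folding or a different convention for the enumeration of $R_r'$ would change the sign or the location of the off-diagonal entry of $M$. Once these conventions are pinned down, no real computation remains, and the fact that the fold path is obtained by scaling the identified segment linearly in time ensures that the change-of-metric is indeed a single linear map with matrix $T_{ij}$.
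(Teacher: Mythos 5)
Your proof is correct, and it is exactly the routine verification the paper implicitly leaves to the reader (Lemma \ref{roseChange} is stated without proof, and the subsequent Lemma \ref{L:SequenceAllowable} is explicitly "left to the reader"). You correctly note that on a rose every edge is a loop, so condition (2) of Definition \ref{AllowableFolds} forces $\ell(e_i)>\ell(e_j)$ and the fold is a proper full fold producing another rose; your consistency check that old $e_i$ traverses $e_j'\ast e_i'$ (matching $\Phi_{ij}(X_i)=X_jX_i$ and the edge enumeration of Definition \ref{AllowableFolds}) pins down the conventions, after which $\ell'_i=\ell_i-\ell_j$ and $\ell'_k=\ell_k$ for $k\neq i$ give precisely $T_{ij}$.
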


\begin{df}[Fold paths]\label{FoldPath}
A \emph{fold path} $\mF \from [0,k] \to \os$ is a path in $\os$ that may be divided into a sequence of basic fold paths $\{\mathcal{F}_i \}_{i=1}^k$ as in Definition \ref{BasicFoldPath},
so that $\mF_i(1) = \mF_{i+1}(0)$ for each $i$.
As above, we will denote by $x_t = (G_t, m_t, \ell_t)$ the points of the path in $\os$. The maps $f_{t,s}$ for $s,t \in [0,k]$ will be defined similarly as above.
\end{df}

If $\mF$ is a fold path from $\mF(0) = x$ to $\mF(k) = y$ we sometimes denote it  by
$\mF\from x \far y$.

The following Lemma follows from Lemma \ref{roseChange} and its proof is left to the reader.

\begin{lem}\label{L:SequenceAllowable}
Let $f_1, \dots, f_k$ be a sequence of combinatorial proper full folds of the $r$-rose, with associated change-of-metric matrices $T_1, \dots, T_k$ having respective inverse matrices $M_1, \dots, M_k$. Suppose that $v \in M_1 \cdots M_k(\RR^r_+)$. 
Then the combinatorial fold $f_l$ is allowable in the metric graph $n_{l}^{-1}(T_l \cdots T_1(v))$, for each $2 \leq l \leq k$ (and any marking). 
Furthermore, applying $f_1, \dots, f_k$ to $x_0 \in \sig_0$ will result in the point $n_k^{-1}(T_{k} \cdots T_1(n_0(x_0)))$ of the simplex $\sig_{(R_r, f_{k} \circ \cdots \circ f_1 \circ m)}$.
\end{lem}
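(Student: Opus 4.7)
The plan is to proceed by induction on the number $k$ of folds, using Lemma \ref{roseChange} as the single-step engine. The key preliminary observation is a positivity statement: since each unfolding matrix $M_{ij}$ has nonnegative entries with $1$'s along the diagonal, one has $M_l(\RR^r_+) \subseteq \RR^r_+$ for every $l$. Thus, writing $v = M_1 \cdots M_k u$ for some $u \in \RR^r_+$ yields
\[
T_l \cdots T_1 v \;=\; M_{l+1} \cdots M_k u \;\in\; \RR^r_+
\]
for every $0 \le l \le k$. In particular, every intermediate ``length vector'' lies in the positive cone.

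The base case $k=1$ is handled directly by Lemma \ref{roseChange}. For the inductive step I would assume that $f_1, \dots, f_{l-1}$ are allowable at the respective intermediate metric graphs and that applying them to $x_0$ yields the point $n_{l-1}^{-1}(T_{l-1} \cdots T_1\, n_0(x_0))$ inside the simplex $\sig_{(R_r,\, f_{l-1} \circ \cdots \circ f_1 \circ m)}$. Because each intermediate graph is still a rose (proper full folds of a rose produce a rose of the same rank), the fold $f_l$ identifies an initial segment of some oriented edge $e$ with the entirety of another edge $e'$, and both $e$ and $e'$ have the same terminal vertex (the unique vertex of the rose). By Definition \ref{AllowableFolds}, allowability of $f_l$ therefore reduces to the strict inequality $\ell(e) > \ell(e')$. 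Identifying $f_l$ with a fold automorphism $\Phi_{ij}$ (so that $T_l = T_{ij}$), the explicit form of $T_{ij}$ in Definition \ref{dfFoldingMatrices} shows that the $i$-th coordinate of $T_l (T_{l-1} \cdots T_1 v)$ is precisely $\ell(e) - \ell(e')$. This coordinate is positive by the preliminary observation, so $f_l$ is allowable. One further application of Lemma \ref{roseChange} then shows that performing $f_l$ transports the length vector by $T_l$ (yielding $n_l^{-1}(T_l \cdots T_1 n_0(x_0))$) and post-composes the marking by $f_l$, completing the induction.

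The main obstacle is essentially conceptual rather than technical: recognizing that ``allowable'' at a rose is exactly the strict positivity condition extracted by $T_{ij}$ from the current length vector, and then propagating this positivity through every intermediate step via the hypothesis $v \in M_1 \cdots M_k(\RR^r_+)$. The remaining work is bookkeeping about how the enumerations $n_l$ and the cumulative marking evolve, and this is already encoded in Lemma \ref{EnuLemma} together with Lemma \ref{roseChange}.
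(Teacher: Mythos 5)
Your proof is correct and follows exactly the route the paper intends: the paper leaves this lemma to the reader with the remark that it follows from Lemma \ref{roseChange}, and your induction — propagating positivity via $T_l \cdots T_1 v = M_{l+1}\cdots M_k u \in \RR^r_+$ and reading off allowability at a rose as strict positivity of the $i$-th coordinate after applying $T_{ij}$ — is precisely that argument. The bookkeeping via Lemma \ref{EnuLemma} for the enumerations and markings is also as intended.
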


\begin{lemma}\label{l:DefiningMetric}
Let $\{D_i\}_{i=1}^\infty$ denote a sequence of nonnegative invertible matrices such that, for some $N \in \NN$, we have that $D_1 \dots D_i$ is strictly positive for all $i \geq N$. Then there exists a vector $w_0 \in \RR_+^r$ so that, if we define $w_{l+1} := D^{-1}_{l+1} w_l$, for each $l$, then each $w_l$ is a positive vector. 
 
\end{lemma}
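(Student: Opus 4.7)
The key reformulation is that $w_l = (D_1 D_2 \cdots D_l)^{-1} w_0$, so the sequence $\{w_l\}$ stays in $\RR_+^r$ if and only if $w_0 \in \bigcap_{l \geq 0} C_l$, where $C_l := D_1 \cdots D_l(\RR_+^r)$. The task thus reduces to showing that this intersection of nested open cones contains a strictly positive vector. The nestedness $C_{l+1} \subseteq C_l$ is immediate: for $v \in \RR_+^r$, the vector $D_{l+1} v$ has no zero entries (otherwise a row of $D_{l+1}$ would vanish, contradicting invertibility), so $D_{l+1}(\RR_+^r) \subseteq \RR_+^r$.

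Next I would exploit the strict positivity hypothesis. Multiplying a strictly positive matrix on the right by a nonnegative invertible matrix yields a strictly positive matrix, since each entry $(AB)_{pq} = \sum_k A_{pk} B_{kq}$ is a strictly positive combination of the entries of the $q$-th column of $B$, which column is nonzero by invertibility. Hence $A_l := D_1 \cdots D_l$ is strictly positive for every $l \geq N$. Because a strictly positive matrix sends $\overline{\RR_+^r} \setminus \{0\}$ into $\RR_+^r$, the set $K_l := q(A_l(\overline{\calS_r}))$, with $q$ the normalization from \refeq{e:Projectionq}, is a compact subset of the open simplex $\calS_r$ for $l \geq N$. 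Thus $\{K_l\}_{l \geq N}$ is a nested family of nonempty compacta whose intersection is nonempty.

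To select a suitable $w_0$ I would use the sequence of Perron--Frobenius eigenvectors. Each $A_l$ with $l \geq N$ has a PF eigenvector $v_l$ satisfying $A_l v_l = \lambda_l v_l$, and the identity $A_k^{-1} v_l = \lambda_l^{-1} D_{k+1} \cdots D_l v_l$ exhibits $v_l$ as an element of $C_k$ for every $N \leq k \leq l$, the right-hand side being strictly positive because each $D_j$ is nonnegative invertible and so preserves $\RR_+^r$. Passing to a convergent subsequence in $\overline{\calS_r}$ produces a candidate basepoint $[w_0]$, and I would fix a representative $w_0 \in \RR_+^r$.

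The main technical obstacle is verifying that $w_0$ lands in each open cone $C_l$ and not merely on its boundary, i.e.\ that $(A_l)^{-1} w_0$ has no vanishing coordinate. I expect this to follow from Birkhoff's contraction theorem: each strictly positive $A_l$ contracts the Hilbert metric on $\RR_+^r$ by a definite factor, so the projective diameters of the $K_l$ shrink and the strictly-interior eigenvectors $v_l$ converge to a limit strictly inside every $C_k$. For the finitely many indices $l < N$, positivity of $w_l$ is automatic from $w_0 \in C_N \subseteq C_l$ together with the fact that $D_1, \dots, D_N$ preserve $\RR_+^r$.
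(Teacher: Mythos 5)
Your reduction to finding $w_0\in\bigcap_l C_l$, the nestedness of the open cones, and the observation that each Perron--Frobenius eigenvector $v_l$ of $A_l$ lies in $C_k$ for all $k\le l$ are all correct, and up to that point you run parallel to the paper (which instead picks a point in the sets $q\bigl(D_1\cdots D_i(M(\ol{\RR_+^r}))\bigr)$ with $M=A_N$). The genuine gap is exactly where you flag it, and the Birkhoff step does not close it. First, the projective diameters of the $K_l$ need not shrink: the tails $D_{k+1}\cdots D_l$ are only nonnegative, hence merely non-expanding for the Hilbert metric, and the paper's own remark on Keane's examples shows that $\bigcap_l q(C_l)$ can fail to be a single point. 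Second, and more fatally, even when the $K_l$ do collapse to a single ray, that ray can lie on the boundary of \emph{every} open cone $C_k$; convergence of the $v_l$ only gives $w_0\in\bigcap_k\ol{C_k}$, and nothing in the hypothesis promotes this to $w_0\in C_k$. The point is that $\ol{C_l}\setminus\{0\}\subset C_k$ holds precisely when the tail product $D_{k+1}\cdots D_l$ is strictly positive, whereas the hypothesis controls only the initial products $D_1\cdots D_i$.

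In fact the statement as written is false, so no argument can succeed without strengthening the hypothesis. Take $r=2$, $D_1=\left(\begin{smallmatrix}2&1\\1&1\end{smallmatrix}\right)$ and $D_i=\left(\begin{smallmatrix}1&0\\1&1\end{smallmatrix}\right)$ for $i\ge 2$: every $D_i$ is nonnegative and invertible, and $A_l=\left(\begin{smallmatrix}l+1&1\\l&1\end{smallmatrix}\right)$ is strictly positive for all $l\ge 1$, yet $A_l^{-1}(x,y)^{T}=\bigl(x-y,\;y-l(x-y)\bigr)^{T}$ cannot have both entries positive for all $l$. (Here the $v_l$ do converge, to the ray through $(1,1)$, which sits on the boundary of every $C_l$ --- exactly the failure mode above.) The usable version, which is what the construction of the ray actually supplies since the fold sequence contains infinitely many disjoint blocks decomposing positive matrices $A_{v_i}^n$, assumes that for every $k$ there exists $l>k$ with $D_{k+1}\cdots D_l$ strictly positive; then $q(A_l(\ol{\mS_r}))=q\bigl(A_k(D_{k+1}\cdots D_l(\ol{\mS_r}))\bigr)$ is a compact subset of $q(C_k)$, and your compactness argument from the third paragraph finishes the proof with no need for eigenvectors or Birkhoff contraction. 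For comparison, the paper's own proof has the same soft spot: the sets $q\bigl(D_1\cdots D_i(M(\ol{\RR_+^r}))\bigr)$ it intersects are not in general nested, since that would require $M^{-1}D_{i+1}M$ to be nonnegative.
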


\begin{proof}
Let $\ol{\RR_+^r}$ denote the set of vectors with nonnegative entries, and recall the map $q$ from Equation \ref{e:Projectionq}.
Let $M = D_1 \cdots D_N$, by the assumption it is a  nonnegative matrix. Consider $\mathcal{I} = \bigcap_{i=1}^\infty D_1 \dots D_i(\RR_+^r)$.
Note that 
\[ \mathcal{I} \supset \bigcap_{i=1}^\infty D_1 \dots D_i(M(\ol{\RR_+^r})) 
\supset \bigcap_{i=1}^\infty q \left(D_1 \dots D_i\left( M(\ol{\RR_+^r})\right) \right)  \] 
and the latter is nonempty since it is an intersection of nested compact sets. 
Moreover, $\mathcal{I} \subset M(\RR^r_+) \subset \RR^r_+$. 
Choose $w_0 \in \mathcal{I}$.
Then $w_0 \in \mathcal{I} \subset D_1 \cdots D_l(\RR_+^r)$ implies that $w_{l} = D^{-1}_{l} \cdots D^{-1}_1 w_0 \in \RR_+^r$ for each $1 \leq l < \infty$.
\end{proof}

\begin{cor}\label{CorInitialPoint}
Let $\{f_i\}_{i=0}^{\infty}$ be a sequence of combinatorial proper full folds of the $r$-rose, with associated change-of-metric matrices $\{T_i\}_{i=0}^{\infty}$ having respective inverse matrices $\{M_i\}_{i=0}^{\infty}$. 
If there exists some $N \in \NN$, so that for all $i>N$ the matrix $D_1 \dots D_i$ is strictly positive, then there exists a vector $w_0 \in \RR_+^r$
so that the infinite fold sequence $\{f_i\}_{i=0}^{\infty}$ is allowable in the rose $x_0 = n_0^{-1}(w_0)$.
\end{cor}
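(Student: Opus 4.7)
The plan is to combine the two preceding results in an essentially mechanical way. By Definition \ref{dfFoldingMatrices}, each unfolding matrix $M_i$ is nonnegative and invertible, and by hypothesis there exists $N$ such that $M_1 \cdots M_i$ is strictly positive for every $i > N$. Thus, setting $D_i := M_i$, the sequence $\{M_i\}$ satisfies the hypotheses of Lemma \ref{l:DefiningMetric}.

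First, I would invoke Lemma \ref{l:DefiningMetric} to produce a vector $w_0 \in \RR_+^r$ such that
\[
w_l \; := \; M_l^{-1} \cdots M_1^{-1} w_0 \; = \; T_l \cdots T_1 w_0
\]
lies in $\RR_+^r$ for every $l \geq 1$. Equivalently, $w_0 \in M_1 \cdots M_l(\RR_+^r)$ for every $l$. Set $x_0 := n_0^{-1}(w_0)$, the metric rose with length vector $w_0$.

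Next, I would apply Lemma \ref{L:SequenceAllowable} iteratively. For each fixed $k$, since $w_0 \in M_1 \cdots M_k(\RR_+^r)$, that lemma asserts that the fold $f_l$ is allowable in $n_l^{-1}(T_l \cdots T_1 w_0)$ for every $l$ in the appropriate range. The case $l=1$ is immediate from the construction: allowability of $f_1 = \Phi_{ij}$ at $x_0$ is, by Definition \ref{AllowableFolds} for a rose, precisely the strict inequality $(w_0)_i > (w_0)_j$, which is equivalent to the positivity of $T_1 w_0 = w_1$ that we already have. Letting $k$ range over all positive integers, we conclude that every $f_l$ is allowable at the corresponding stage, so the infinite fold sequence $\{f_i\}_{i=0}^\infty$ is allowable at $x_0$, as desired.

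The real content of the proof is already concentrated in Lemma \ref{l:DefiningMetric}, whose construction of $w_0$ relies on a nested family of closed cones having nonempty intersection after projectivization; the eventual strict positivity of $M_1 \cdots M_N$ is what allows one to cut down to a compact subsimplex and apply the finite intersection property. The corollary itself is then just the packaging of that existence statement together with Lemma \ref{L:SequenceAllowable}, and I do not anticipate any further obstacle.
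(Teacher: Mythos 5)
Your proposal is correct and is exactly the argument the paper intends: the corollary is stated without proof as an immediate consequence of Lemma \ref{l:DefiningMetric} (applied with $D_i := M_i$, which is how one must read the typo ``$D_1\dots D_i$'' in the corollary's hypothesis) combined with Lemma \ref{L:SequenceAllowable}. Your added remark handling the first fold directly via Definition \ref{AllowableFolds} is a harmless and correct patch for the off-by-one range in Lemma \ref{L:SequenceAllowable}.
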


\begin{prop}\label{foldLineGeodesics}
Let $\{\mF_i \from x_i \far x_{i+1}\}_{i=0}^k$ be a sequence of fold paths with fold maps $\{f_{s,t}\}_{s\geq t\geq 0}$.
Suppose there is a conjugacy class $\al$ in $F_r$ satisfying that, for each $i$, the realization $\al_{x_i}$ of $\alpha$ in $x_i$ is legal with respect to the train track structure induced by $f_{i+1,i}$. Then the corresponding fold path $\im (\mF) = \{ x_t \}_{t \in [0,k]}$ is an unparametrized geodesic, i.e. for each $r\leq s \leq t$ in $[0,k]$, we have $d(x_r,x_t) = d(x_r, x_s) + d(x_s, x_t)$.
\end{prop}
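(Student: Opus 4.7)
The plan is to apply Proposition \ref{geodTestProp} directly to any triple $r \leq s \leq t$ in $[0,k]$. The conclusion follows once we verify, for every pair $s \leq t$, that $\alpha_{x_s}$ lies in the tension graph $\Delta_{f_{t,s}}$ and is legal with respect to the train track structure induced by $f_{t,s}$. I would handle these two ingredients separately as independent claims holding uniformly over all $s \leq t$.

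For the tension graph claim, each unprojectivized basic fold map is an isometric quotient of the graph: it identifies two initial edge segments of equal length, so every edge of $\hat{x}_s$ maps isometrically onto an edge-path in $\hat{x}_t$ of the same length. This property is preserved under composition, so after normalizing to $\os$, the projectivized map $f_{t,s}$ stretches every edge uniformly by the factor $\mathrm{vol}(\hat{x}_s)/\mathrm{vol}(\hat{x}_t)$. In particular $\Delta_{f_{t,s}} = x_s$, so containment of $\alpha_{x_s}$ in the tension graph is automatic. For the legality claim, the hypothesis supplies it at the integer vertex times. I would first extend it within each basic fold interval: throughout $\mathcal{F}_i$ the illegal turn is the single turn being folded, so since $\alpha_{x_i}$ avoids that turn and the partial fold $f_{s,i}$ only narrows what remains to fold, $\alpha_{x_s}$ is legal for $f_{t,s}$ for all $i \leq s \leq t \leq i+1$. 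Second, I would check that legality composes across basic folds: if $\{e,e'\}$ is a legal turn for $f_{s,r}$, the images begin with distinct edges $e''$ and $e'''$ at $f_{s,r}(v)$, and then $\{e,e'\}$ is legal for $f_{t,r} = f_{t,s}\circ f_{s,r}$ if and only if $\{e'',e'''\}$ is legal for $f_{t,s}$. Applying this observation iteratively—using the hypothesis at each integer time to bridge across consecutive basic folds, and the within-interval argument between integer times—yields legality of $\alpha_{x_s}$ for $f_{t,s}$ at every pair $s \leq t$.

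Once both ingredients are in place for every pair, Proposition \ref{geodTestProp} gives $d(x_r, x_t) = d(x_r, x_s) + d(x_s, x_t)$, which is exactly the unparametrized geodesic condition. I expect the main technical obstacle to be the composition-of-legality step: one must keep track of how the train track structure of $f_{t,s}$ refines the structures of the individual basic folds, and the illegal turn can jump discontinuously to an entirely different turn at each integer time. The hypothesis that $\alpha_{x_i}$ is legal for $f_{i+1,i}$ at every integer $i$ is precisely what bridges these discontinuities, so the real content of the proof is to phrase the composition carefully so that the per-interval legality statements chain together cleanly.
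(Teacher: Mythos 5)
Your proposal is correct and takes essentially the same approach the paper intends: the paper's own proof of this proposition is just the remark that it ``uses Propositions \ref{witness} and \ref{geodTestProp} and is left to the reader,'' and your argument is precisely the fleshed-out version of that. Both of your ingredients---the uniform stretch factor $\mathrm{vol}(\hat{x}_s)/\mathrm{vol}(\hat{x}_t)$ making the tension graph all of $x_s$, and the propagation of legality within each basic fold interval and across integer times via composition of legal turns---are sound.
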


\begin{proof}
The proof uses Propositions \ref{witness} and \ref{geodTestProp} and is left to the reader.
\end{proof}

Suppose $y_0$ is a graph and $f_0 \colon y_0 \far y_1$ is direction matching, then by Observation \ref{directionMatchingIndOrientation}, we have that $y_1$ inherits an orientation such that the image of each edge is a positive edge-path (see Definition \ref{defPaths}). 

\begin{cor}\label{PositiveFoldLineGeodesics}
Let $y_0$ be a directed metric $r$-rose graph with length vector $v$ and let $\{f_i \colon y_i \far y_{i+1} \}^{i=k}_{i=0}$ be an allowable sequence of proper full folds. Suppose that for each $i=0, \dots, k-1$ the fold $f_{i+1}$ is direction matching with respect to the orientation of $y_{i+1}$ inherited by $f_i$. For each $0 \leq i \leq k$, let $\{\mathcal{F}_i \colon y_i \far y_{i+1} \}$ denote the fold path determined by $\{f_i\}$. Then the corresponding fold path $Im (\mathcal{F})=\{y_t\}_{t \in [0,k]}$ is an unparametrized geodesic.
\end{cor}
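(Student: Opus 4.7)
The strategy is to apply Proposition \ref{foldLineGeodesics}: it suffices to exhibit a single conjugacy class $\alpha \in F_r$ whose realization $\alpha_{y_i}$ in each $y_i$ is legal with respect to the train track structure induced by $f_i$. My candidate is $\alpha = X_1 X_2 \cdots X_r$, the cyclic positive word in the fixed free basis; its realization $\alpha_{y_0}$ is the loop that traverses each positive edge of the rose $y_0$ exactly once in the positive direction.

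The key combinatorial observation is that at the single vertex $v$ of a rose, any cyclic positive edge-loop uses only mixed-direction turns, of the form $\{\overline{E_k}, E_l\}$ with $\overline{E_k}$ a negative direction (the incoming positive edge $E_k$ viewed outward) and $E_l$ a positive direction. By definition, however, the illegal turn of a direction matching fold consists of two edges in the \emph{same} direction: both positive if the fold is positive, both negative if it is negative. Hence every positive loop is automatically legal for $f_i$. Moreover, a rose remains a rose after a proper full fold of two of its loops, since both the identified segment and the remainder of the longer edge end up as loops at $v$; thus the single-vertex turn analysis persists throughout the sequence.

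It remains to prove by induction on $i$ that $\alpha_{y_i}$ is a positive edge-loop. The base case is by construction. For the inductive step, $\alpha_{y_i}$ is $f_i$-legal by the observation above, so $f_i(\alpha_{y_i})$ is already edge-reduced. Observation \ref{directionMatchingIndOrientation} implies that $f_i$ sends each positive edge of $y_i$ to a positive edge-path in $y_{i+1}$, so $f_i(\alpha_{y_i})$ is a positive edge-path; and consecutive positive edges of a rose cannot cancel (a positive edge reverses to a negative edge, distinct from the next positive one), so the image is also cyclically reduced. Therefore $\alpha_{y_{i+1}} = f_i(\alpha_{y_i})$ is a positive loop in $y_{i+1}$, and the induction closes. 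Proposition \ref{foldLineGeodesics} now gives the result. The only points that require genuine bookkeeping are verifying that the rose structure is preserved at each step (so that the single-vertex turn analysis continues to apply) and checking cyclic reducedness at the ``seam'' of the cyclic word $\alpha_{y_{i+1}}$; both are elementary, and I do not expect any deeper obstacle.
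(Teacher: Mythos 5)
Your proof is correct and follows essentially the same route as the paper: reduce to Proposition \ref{foldLineGeodesics} by exhibiting a conjugacy class whose realization stays positive (hence legal, since direction matching folds have same-sign illegal turns while positive loops only cross mixed-sign turns) under every fold. The only difference is cosmetic — the paper takes the witness to be the single generator $X_1$ rather than $X_1X_2\cdots X_r$, which spares the bookkeeping about cyclic reducedness at the seam.
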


\begin{proof}
Without generality loss we can assume that the marking on $R_r$ is the identity. Then, by Proposition \ref{foldLineGeodesics}, it suffices to show that there exists a conjugacy class $\alpha$ in $F_r$ satisfying that, for each $i$, the realization $\alpha_{y_i}$ of $\alpha$ in $y_i$ is legal with respect to the train track structure induced by the map $f_i$. We claim that this holds for the conjugacy class $\alpha$ of a positive generator $X_1$. By induction suppose that $\alpha_{y_i}$ is a positive loop. Since $f_i$ maps positive edges to positive edge-paths, $\alpha_{y_{i+1}}$ is also a positive loop.
Since $f_{i+1}$ is direction matching, $\al_{y_{i+1}}$ is $f_{i+1}$-legal and positive. 
\end{proof}

\section{Brun's algorithm and density of Perron-Frobenius eigenvectors}{\label{s:BrunsAlgorithm}}

We introduce fibered systems so that we can use an algorithm of Brun to prove, in Theorem \ref{T:ConvergenceInAngle}, that the sequence of Brun matrices converge weakly for a full measure set of points. These matrices are unfolding matrices, which will be significant for proving Theorems A, B, and C in the next sections.

\subsection{Fibered Systems}{\label{ss:FiberedSystems}}
The following definitions are taken from \cite{s00}.

\begin{df}[Fibered systems]{\label{d:FiberedSystems}}
A pair $(B,T)$ is called a \emph{fibered system} if $B$ is a set, $T \from B \to B$ is a map, and there exists a partition $\{ B(i) \mid i \in I\}$ of $B$ such that $I$ is countable and $T|_{B(i)}$ is injective. The sets $B(i)$ are called \emph{time-1 cylinders}.
\end{df}

\begin{df}[Time-$s$ cylinder]{\label{d:Cylinders}}
For each $x \in B$, one can define a sequence
$\Phi(x) = (i_1(x), i_2(x) , \dots ) \in I^{\NN}$ by letting
$i_s(x) = i \iff T^{s-1}x \in B(i).$ 
In other words, $i_s(x)$ tells us which cylinder $T^{s-1}x$ lands in. Then a \emph{time-$s$ cylinder} is a set of the form
\[ B(i_1, \dots ,  i_s) = \{ x \in B \mid i_1(x) = i_1,  \dots,  i_s(x) = i_s \}.\]
\end{df}

From the definitions we have $B(i_1, \dots , i_{s+1}) =  B(i_1, \dots , i_s)  \cap T^{-s} B(i_{s+1})$.

\subsection{The unordered Brun algorithm in the positive cone}{\label{SS:BrunUnordered}}

The following algorithm (commonly referred to as \emph{Brun's algorithm}) was introduced by Brun \cite{b57} as an analogue of the continued fractions expansion of a real number in dimensions 3 and 4. It was later extended to all dimensions by Schweiger in \cite{sch82}.

\begin{df}[Brun's (unordered) algorithm]\label{D:BrunUnordered} \emph{Brun's unordered algorithm} is the fibered system $(\pc{n},T)$ defined on \[ \pc{n} = \RR^n_+ = \{(x_1, \dots, x_n) \in \RR_+^n \mid x_i > 0 \text{ for all } 1 \leq i \leq n \}\] by
\begin{eqnarray*}
T \colon  \pc{n} & \to & \pc{n} \\
(x_1, \dots , x_n) & \mapsto & (x_1, \dots , x_{m-1}, x_m-x_s, x_{m+1}, \dots , x_n),
\end{eqnarray*} 
where 
$$m(x) := \min \left\{ i \left| x_i = \max_{1 \leq j \leq n} x_j  \right\}, \right.
\quad
s(x) := \min \left\{ i \neq m(x) \left| x_i = \max_{
1 \leq j\neq m(x) \leq n} x_j  \right\}. \right.$$
In other words, for each $x=(x_1, \dots, x_n) \in \pc{n}$, we have that $m(x)$ is the first index that achieves the maximum of the coordinates and $s(x)$ is the first index that achieves the maximum of all of the coordinates except for $m(x)$.
\end{df}

Notice that, letting $(i,j)=(m(x),s(x))$, the transformation $T$ is just left multiplication by the matrix $T_{i,j}$ from Definition \ref{dfFoldingMatrices}. Then, given a vector $v_0=(x_1, \dots, x_n) \in C^n$ with rationally independent coordinates, one obtains an infinite sequence $$\{v_k=(x_1^k, \dots, x_n^k)\}_{k=1}^\infty \subset C^n,$$ 
where $v_{k+1}$ is recursively defined by $v_{k+1}=T_{m(v_k),s(v_k)}v_k$.

\begin{df}[Brun sequence]
In light of the above, the sequence for Brun's unordered algorithm (which we call the \emph{Brun sequence}) will consist of the ordered pairs $\overrightarrow{k_s}=(i_s,j_s)$, where $(i_s,j_s)=(m(v_s),s(v_s))$. Further, the sequence $\Theta(x)= \{\overrightarrow{k_s}\}_{s=1}^\infty$ will determine a sequence of folding matrices, which we denote by
$\{ T_{s}^x \}_{s=1}^\infty$, where $T_s^{x} = T_{\overrightarrow{k_s}(x)}$.
We let $\{M_s^x\}_{s=1}^\infty$ denote the corresponding inverses, i.e. the unfolding matrices. We denote their finite products by
\begin{equation}\label{A_kDef}
A^{x}_s = M_1^{x} M_2^{x} \cdots  M_{s}^{x} \text{ for each } s \in \NN.
\end{equation}
\end{df}

\smallskip

\noindent Then, as above, for each $v_0 \in \pc{n}$, we have a sequence of vectors $\{v_s\}_{s=1}^{\infty} \subset \pc{n}$, where $ v_{s+1} = T v_s = T^{v_0}_{s+1} v_s$.
Thus, $v_s=M_{s+1}^{v_0} v_{s+1}$, and hence
$v_0 = A^{v_0}_s v_{s}$. So
\begin{equation}\label{eqIntOfCones}
v_0 \in \bigcap_{s=1}^{\infty} A_s^{v_0} ( \pc{n}).
\end{equation}
This fact will become particularly important in the proof of Theorem \ref{thm_PF_dense}.

\begin{df}[Brun matrix]
For each vector $v_0 \in C^r$, we call each matrix
\begin{equation}\label{eqA}
A_n = M^{v_0}_{i_1,j_1} \cdots M^{v_0}_{i_n,j_n}
\end{equation}
a \emph{Brun matrix}. We let $\B_r$ denote the set of $r$ x $r$ Brun matrices. 
\end{df}

\begin{rk}
When it is clear from the context, we may leave out reference to the starting vector $v_0$ and simply write $A_s$, $T_s$, $M_s$, etc.
\end{rk}

\begin{prop}\label{BecomesPos}
Let $\dss$ be the set of rationally independent vectors in $\pc{r}$, then for each $x \in \dss$ there exists an $N \in \NN$ so that $A_n^x$ is a positive matrix for each $n>N$.
\end{prop}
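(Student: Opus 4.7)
The plan is to track the supports of the columns of $A^x_n$ combinatorially. Since each unfolding matrix $M_{i,j}$ equals $I + E_{ij}$, where $E_{ij}$ is the matrix whose only nonzero entry is a $1$ in position $(i,j)$, the update $A^x_{n+1} = A^x_n M^x_{n+1}$ replaces column $j_{n+1}$ of $A^x_n$ with the sum of columns $j_{n+1}$ and $i_{n+1}$, while leaving all other columns unchanged. Consequently each column's support is nondecreasing in $n$, and the entry $A^x_n[k,l]$ is strictly positive precisely when there exists a time-ordered chain $k = k_0, k_1, \ldots, k_s = l$ with $(k_{p-1}, k_p) = (i_{t_p}, j_{t_p})$ for some $t_1 < t_2 < \cdots < t_s \leq n$. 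It therefore suffices to show that in the infinite Brun sequence $\Theta(x)$, such a chain eventually exists for every ordered pair $(k,l)$ with $k \neq l$; since there are only finitely many such pairs, this yields a uniform $N$ beyond which $A^x_n$ is strictly positive.

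The main technical step will be to prove that for $x \in \dss$, every ordered pair $(i,j)$ with $i \neq j$ occurs as $(i_n, j_n) = (m(v_n), s(v_n))$ for infinitely many $n$. I would first establish the weaker statement that each index $i_0$ appears as $m(v_n)$ infinitely often, by contradiction: if $i_0$ were never maximal for $n > N$, then $(v_n)_{i_0}$ would be constant and positive for $n \geq N$. A direct computation using the form of the Brun step shows that $\max_l (v_n)_l$ is strictly decreasing in $n$, and one verifies --- either by a subadditivity argument, or by invoking the ergodicity of $(\pc{r}, T)$ (referenced in the acknowledgements) together with the fact that rationally independent orbits never enter a coordinate hyperplane --- that $\max_l (v_n)_l \to 0$, contradicting $\max_l (v_n)_l \geq (v_n)_{i_0} > 0$. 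Running the same argument along the subsequence of times with $m(v_n) = i_0$ should then upgrade this to the statement that every pair $(i_0, j)$ occurs infinitely often.

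Once every pair is known to occur infinitely often, assembling a time-ordered chain for a given $(k,l)$ becomes bookkeeping: walk along any fixed directed path $k = k_0, k_1, \ldots, k_s = l$ in the complete graph on $\{1,\dots, r\}$, and at stage $p$ pick the next occurrence of the edge $(k_{p-1}, k_p)$ strictly after the time chosen at stage $p-1$. Maximizing the resulting end times over the finitely many pairs $(k,l)$ produces the required $N$.

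The step I expect to be the main obstacle is the refined pair statement, not the single-index statement: rational independence gives distinct coordinates at every time but does not immediately exclude, for instance, that $j_0$ is always strictly below some other coordinate at the precise times when $i_0$ is maximal. The cleanest route is likely to apply the Birkhoff ergodic theorem to the cylinders $B(i,j)$ and then verify that the measure-zero exceptional set can be avoided for every (not merely almost every) rationally independent $v_0$, by pulling this exceptional set back through the Brun dynamics.
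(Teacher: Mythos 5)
Your combinatorial reduction is right: $A^x_n[k,l] > 0$ exactly when there is a time-ordered chain $k = k_0, k_1, \ldots, k_s = l$ with $(k_{p-1},k_p) = (i_{t_p}, j_{t_p})$ and $t_1 < \cdots < t_s$, and it suffices to find one such chain per pair $(k,l)$. But you then set as the crux the claim that \emph{every} ordered pair $(i,j)$ occurs as a Brun pair infinitely often, and this is both more than you need and very likely more than is true. The paper deliberately never asserts this; rational independence alone gives you no control over which index is second-largest at the instants when a given index is largest, and the ergodic theorem only gives a full-measure statement in which the exceptional set, being merely null, can perfectly well contain rationally independent vectors. Your proposed remedy (``pulling the exceptional set back through the Brun dynamics'') does not close this gap: $T$-invariance of the exceptional set gives you nothing, since ``rationally independent'' is also $T$-invariant and is not a full-measure-forcing condition pointwise.

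What you are missing is that a chain from $k$ to $l$ comes for free once you know each index is eventually maximal, because consecutive Brun pairs automatically concatenate. If $(i_n, j_n) = (a,b)$, then after subtracting the second-largest coordinate $b$ from the largest $a$, the new largest coordinate is either still $a$ or has become $b$; moreover if it is still $a$, the second-largest is still $b$. Hence $(i_{n+1}, j_{n+1})$ is either $(a,b)$ again or $(b,c)$ for some $c$. So the Brun sequence between the first time $a$ after which $i$ is maximal and the next time $b$ at which $j$ is maximal has the block form $(i,c_1)^{N_1}(c_1,c_2)^{N_2}\cdots(c_{t-1},j)^{N_t}$, which is precisely a chain from $i$ to $j$ in your sense. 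That is the paper's argument, and it dispenses with the hard (and probably false) claim that the single edge $(i,j)$ ever appears. Your proof of the single-index statement (each $i_0$ is maximal infinitely often) is in the right direction, though your route through $\max_l (v_n)_l \to 0$ and ergodicity is heavier than needed: the paper just notes that while $i_0$ is never maximal after time $h$, each Brun step subtracts at least $(v_h)_{i_0}$ from the sum of coordinates, so this can happen only finitely often.
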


\begin{proof}
We fix $v_0$ and omit the index $v_0$ from the notation below.
We first prove that, for each $i$, and for each $h \in \NN$, there exists some $m>h$ and some $j$ such that $\overrightarrow{k_m} = (i,j)$.
Starting with $v_h$ and until $(v_m)_i$ becomes the largest coordinate, at each step one subtracts a number $\geq (v_m)_i=(v_h)_i$ from some coordinate $\geq (v_m)_i$.
This can only happen a finite number of times before each coordinate apart from $(v_m)_i$ becomes less than $(v_m)_i = (v_h)_i$.

\smallskip

Consider $A_n$ as in (\ref{A_kDef}). To prove the proposition, it suffices to show that, for each
$(i,j)$, there exists a large enough $N$ so that for all $n>N$ the $(i,j)$-th entry of $A_n$ is positive.
In fact, it is enough to show that this entry is positive for some $A_n$.
Indeed $A_{n+1}$ is obtained from $A_n$ by adding one of its columns to another one of its
columns, so that an entry positive in $A_n$, will be positive in
$A_{n+1}$.

Fix $i, j$.
Let 
\[ \begin{array}{lcr}
a = \min\{ \text{ } t \text{ } \vert \text{ } \overrightarrow{k_{t}} = (i, c) \text{ for some } c\},  \text{ } 
b = \min\{ \text{ } t>a \text{ } \vert \text{ } \overrightarrow{k_{t}} = (j, d) \text{ for some } d\}.
\end{array} \]

Let $c_1$ be such that $\overrightarrow{k_{a}} = (i, c_1)$.
Observe that, since $c_1$ is the second largest coordinate in $v_{a}$, in the next vector $v_{a+1}$, either $i$ is still the largest coordinate or $c_1$ becomes the largest coordinate. 
There is some $N_1\geq 1$ and some index $c_2$ so that
\[ A_{a+N_1}= A_{a-1} M_{(i,c_1)}^{N_1} M_{(c_1,c_2)}.\]
We continue in this way,
$A_{n} = A_{a-1} M_{(i,c_1)}^{N_1} M_{(c_1,c_2)}^{N_2} M_{(c_2,c_3)}^{N_3} \cdots M_{(c_{t},c_{t+1})}^{N_t}$.
When $n=b$, we have that $c_t = j$. Thus, for $n=b-1$, we have
\[ A_{b-1} = A_{a-1} M_{(i,c_1)}^{N_1} M_{(c_1,c_2)}^{N_2} M_{(c_2,c_3)}^{N_3} \cdots M_{(c_{t},j)}^{N_t}.\]
It is elementary to see that the $(i,j)$-th entry of $M_{(i,c_1)}^{N_1} M_{(c_1,c_2)}^{N_2} M_{(c_2,c_3)}^{N_3} \cdots M_{(c_{t},j)}^{N_t}$ is positive.
This implies that the $(i,j)$-th entry of $A_{b-1}$ is positive.
\end{proof}

\subsection{Other versions of Brun's algorithm}{\label{SS:BrunOther}}

To use the results of Schweiger's books, we must give two other different, but related, versions of Brun's algorithm.

\begin{df}[Brun's ordered algorithm]\label{D:BrunOrdered} \emph{Brun's ordered algorithm} is the fibered system $(\Delta^{n},T')$ defined on
$\Delta^n := \{x \in \pc{n} \mid x_1 \geq \dots \geq x_n  \}$ by
\[ \begin{array}{rl}
T' \colon \Delta^n & \to \Delta^n \\
(x_1, \dots , x_n) & \mapsto (x_2, \dots, x_{i-1} , x_1 - x_2 , x_{i} , \dots , x_n),
\end{array}\]
 where $i=i(x) \geq 2$ is the first index so that $x_1-x_2 \geq x_{i}$ and, if there is no such index, we let $i(x) = n$.
The time-1 cylinders are $\Delta^n(i) = \{ x \mid x_{i-1}> x_1-x_2 \geq x_i\}$.
\end{df}

Notice that the transformation $T'$ is just left multiplication by the matrix $T_{1,2}$, followed by a permutation matrix that we denote $P_i$, 
determined by the cylinder $\Delta^n(i)$. (We denote $P_iT_{1,2}$ by $T_i'$ and its inverse by $M_i'$.) Hence, given a sequence of indices $\omega(x) = (i_1, i_2, \dots)$ with $1 \leq i_j \leq n$ for each $i_j$, one obtains a sequence 
of matrices $\{T_{i_j}'\}_{j=1}^{\infty}$. Given $v_0 \in \Delta^n(i_1, \dots, i_m)$, this gives a sequence of points $v_0, \dots, v_m \in \Delta^n$ such that $v_{k+1}=T_{i_j}'v_k$ for each $1 \leq k \leq m-1$. The fibered system 
sequence here will be $\omega(x)=(i_1, i_2, \dots)$ when 
$$x \in \bigcap_{m=1}^{\infty} \Delta^n(i_1, i_2, \dots, i_m).$$ 
If $\omega(x)=(i_1, i_2, \dots)$, we define
\begin{equation}
A^x_k=M_{i_1}' \cdots M_{i_k}'
\end{equation}
for each $k \in \NN$.

\begin{df}[Brun's homogeneous algorithm]\label{D:BrunHomogeneous} \emph{Brun's homogeneous algorithm} is the fibered system $(B_n,\overline{ T'})$ defined on 
\[ B_n = \{ x \in \RR^n \mid 1 \geq x_1 \geq x_2 \geq \dots \geq x_n  \geq 0 \}\] and where $\ol{T'} \from B_n \to B_n$ is such that the following diagram commutes:
\[ \xymatrix{
\Delta^{n+1} \ar[d]^p \ar[r]^{T'} &\Delta^{n+1} \ar[d]^p\\
B_n \ar[r]^{\overline{T'}} &B_n} \]
where $p\from \Delta^{n+1} - \{0\} \to B_n$ is defined by
\begin{equation}
p(x_1, \dots , x_{n+1}) = \left(
\frac{x_2}{x_1}, \dots , \frac{x_{n+1}}{x_1} \right).
\end{equation}

We denote the time-1 cylinders by $B_n(i)$.
\end{df}

\subsection{Relating the algorithms}{\label{SS:BrunRelations}}

\begin{df}
Let $\mathcal{O}\colon C^n \to \Delta^n$ be defined by 
$$(x_1, \dots, x_n) \mapsto (x_{i_1}, \dots, x_{i_n})$$ where 
$x_{i_1} \geq x_{i_2} \geq \dots \geq x_{i_n}$. Note that for a particular $x$, $\calO(x)$ is a permutation.
\end{df}

\begin{lem}\label{RelatingAlgorithms}
For each $x \in C^n$ and for each $m \in \NN$, there exist permutation matrices 
$P_{i_1}, P_{i_2}$ so that 
\[ A^x_m = P_{i_1} (A^{\mathcal{O}(x)}_m)' P_{i_2}. \]
\end{lem}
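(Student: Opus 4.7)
The plan is to establish a single-step compatibility between Brun's unordered algorithm applied to $x$ and the ordered algorithm applied to $\mathcal{O}(x)$, and then telescope the resulting one-step matrix identity over $m$ iterations. Set $v_s := T^s x$, let $Q_s$ denote the permutation matrix that sorts the coordinates of $v_s$ into weakly decreasing order (so $Q_s v_s = \mathcal{O}(v_s) \in \Delta^n$), and let $w_s := (T')^s \mathcal{O}(x)$.

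The first step is to verify that sorting commutes with a Brun step: for each $v \in C^n$ with distinct coordinates,
\[
\mathcal{O}(Tv) \;=\; T'(\mathcal{O}(v)).
\]
This holds because both sides equal the decreasing-order sort of the multiset obtained from $\{v_1, \dots, v_n\}$ by replacing the largest coordinate $v_{m(v)}$ with the difference $v_{m(v)} - v_{s(v)}$. Inducting on $s$ then yields $w_s = Q_s v_s$ for every $s \geq 0$.

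The second step is to upgrade this vectorial identity to the matrix identity
\[
Q_{s+1}\, T_{s+1}^x\, Q_s^{-1} \;=\; (T_{s+1}^y)',
\]
where $y = \mathcal{O}(x)$. Both sides are of the ``subtract one row from another, then permute rows'' type, and Step~1 guarantees that the underlying row-subtraction and re-sorting operations correspond under the relabeling by $Q_s$ and $Q_{s+1}$. Conjugating $T_{s+1}^x = T_{m(v_s), s(v_s)}$ by $Q_s$ produces $T_{1,2}$ (since $Q_s$ relabels rows $m(v_s)$ and $s(v_s)$ as rows $1$ and $2$), so the left-hand side equals $(Q_{s+1}Q_s^{-1})\,T_{1,2}$; meanwhile the right-hand side is $P_{i(w_s)}\,T_{1,2}$, and $P_{i(w_s)} = Q_{s+1}Q_s^{-1}$ because both re-sort $T_{1,2}\,w_s$ into $w_{s+1}$.

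Finally, inverting the one-step identity gives $M_{s+1}^x = Q_s^{-1}(M_{s+1}^y)'\,Q_{s+1}$, and substituting into $A_m^x = M_1^x M_2^x \cdots M_m^x$ produces, after cancellation of the interior factors $Q_{s+1}Q_{s+1}^{-1}$,
\[
A_m^x \;=\; Q_0^{-1}\,(M_1^y)'(M_2^y)'\cdots(M_m^y)'\, Q_m \;=\; Q_0^{-1}\,(A_m^y)'\, Q_m,
\]
so the lemma holds with $P_{i_1} := Q_0^{-1}$ and $P_{i_2} := Q_m$. The main obstacle is the bookkeeping in the single-step matrix identity, particularly the boundary case in Definition~\ref{D:BrunOrdered} where $x_1 - x_2 < x_n$; restricting to vectors with rationally independent coordinates (as in Proposition~\ref{BecomesPos}) ensures that $m(v_s)$, $s(v_s)$, $i(w_s)$, and the sorting permutations $Q_s$ are all unambiguously defined.
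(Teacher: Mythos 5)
Your proof is correct and takes essentially the same approach as the paper: the paper's proof consists of just asserting the commutative diagram $\mathcal{O} \circ T = T' \circ \mathcal{O}$, and your argument is precisely the elaboration needed to turn that observation into the stated matrix identity, via the one-step conjugation $Q_{s+1}\,T_{s+1}^x\,Q_s^{-1} = (T_{s+1}^{\mathcal{O}(x)})'$ and telescoping. The caveat you raise about rationally independent coordinates is a fair observation the paper glosses over, since the sorting permutations $Q_s$ are only uniquely determined when coordinates are distinct, and the lemma is in any case only applied to such vectors.
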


\begin{proof} This follows from the following commutative diagram:
\[ \xymatrix{
C^r \ar[d]^{\mathcal{O}} \ar[r]^{T} &C^r \ar[d]^{\mathcal{O}}\\
\Delta^r \ar[r]^{T'} &\Delta^r}. \]
\end{proof}

\begin{cor}\label{Positive}
For each $x \in C^n$ and for each $m \in \NN$, 
we have that $A^x_m$ is a positive matrix if and only if $ (A^{\mathcal{O}(x)}_m)'$ is a positive matrix.
\end{cor}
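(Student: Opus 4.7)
The plan is to derive this corollary directly from Lemma \ref{RelatingAlgorithms}, which already establishes the equality $A^x_m = P_{i_1} (A^{\mathcal{O}(x)}_m)' P_{i_2}$ for suitable permutation matrices $P_{i_1}, P_{i_2}$. The only content left is the observation that conjugating (or more precisely, pre- and post-multiplying) a matrix by permutation matrices preserves the property of having all entries strictly positive, since such multiplications simply reorder the rows and columns.

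In more detail, I would first invoke Lemma \ref{RelatingAlgorithms} to write $A^x_m = P_{i_1} (A^{\mathcal{O}(x)}_m)' P_{i_2}$. Next I would note that a permutation matrix $P$ is nonsingular with $P^{-1}$ again a permutation matrix, so that $(A^{\mathcal{O}(x)}_m)' = P_{i_1}^{-1} A^x_m P_{i_2}^{-1}$ as well. Then I would observe that for any matrix $B$ and permutation matrices $Q_1, Q_2$, the $(k,l)$ entry of $Q_1 B Q_2$ is exactly the $(\pi_1(k), \pi_2(l))$ entry of $B$ for the permutations $\pi_1, \pi_2$ associated to $Q_1, Q_2$. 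In particular, every entry of $Q_1 B Q_2$ equals some entry of $B$, and vice versa.

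It follows that every entry of $A^x_m$ is strictly positive if and only if every entry of $(A^{\mathcal{O}(x)}_m)'$ is strictly positive, which is precisely the claim. No serious obstacle is anticipated here; the corollary is essentially a bookkeeping consequence of the lemma, and I expect the entire argument to fit in one short paragraph.
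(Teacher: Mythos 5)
Your proposal is correct and is exactly the intended argument: the paper states Corollary \ref{Positive} as an immediate consequence of Lemma \ref{RelatingAlgorithms}, with the (unwritten) justification being precisely that pre- and post-multiplication by permutation matrices only permutes rows and columns and therefore preserves strict positivity of all entries. Your write-up just makes explicit what the paper leaves implicit.
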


\begin{cor}\label{OrdPos}
For each irrational $x \in \Delta^n$ there exists an $N$ so that $(A^x_n)'$ is positive for all $n > N$.  
\end{cor}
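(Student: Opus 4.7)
The plan is to view this as a straightforward consequence of Proposition \ref{BecomesPos} combined with the relationship between the ordered and unordered Brun algorithms established in Corollary \ref{Positive}.

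First, I would unpack the hypothesis: an irrational $x \in \Delta^n$ should be interpreted as a vector in $\Delta^n$ with rationally independent coordinates, so in particular $x \in Y_n$, the set of rationally independent vectors in $C^n$. Since $x$ already lies in $\Delta^n$, its coordinates are weakly decreasing, so the ordering map $\mathcal{O}$ fixes $x$; that is, $\mathcal{O}(x) = x$.

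Next I would invoke Proposition \ref{BecomesPos}, applied to $x \in Y_n$, to produce an integer $N \in \NN$ such that $A^x_n$ is a positive matrix for every $n > N$. Finally, I would apply Corollary \ref{Positive} with this $x$ and each such $n$: since $A^x_n$ is positive, the matrix $(A^{\mathcal{O}(x)}_n)'$ is also positive, and because $\mathcal{O}(x) = x$ this is exactly the statement that $(A^x_n)'$ is positive for all $n > N$.

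There is no real obstacle here; the only conceptual point to be slightly careful about is making explicit that the hypothesis of rational independence is preserved under $\mathcal{O}$ (trivially, since $\mathcal{O}$ just permutes coordinates), so that Proposition \ref{BecomesPos} is applicable. The content of the corollary is therefore essentially a translation, via Lemma \ref{RelatingAlgorithms}, from the unordered algorithm to the ordered one.
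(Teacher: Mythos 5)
Your proposal is correct and is essentially identical to the paper's proof, which simply cites Proposition \ref{BecomesPos} and Corollary \ref{Positive}; you have merely spelled out the (correct) details that $\mathcal{O}(x)=x$ for $x\in\Delta^n$ and that ``irrational'' means rationally independent coordinates.
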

\begin{proof}
This follows from Proposition \ref{BecomesPos} and Corollary \ref{Positive}. 
\end{proof}

\subsection{Weak convergence and consequences}{\label{SS:Convergence}}

Recall the definitions of the $(r-1)$-dimensional simplex $\mS_r$ in Definition \ref{simplexDefn} and the projection map $q \from \pc{r} \to \mS_r$ of Equation \ref{e:Projectionq}. We will show that for each $r \geq 2$, the set of Perron-Frobenius eigenvectors for the transition matrices of positive automorphisms in $Aut(F_{r})$ is dense in the simplex $\mS_{r}$.

It is proved in \cite[Theorem 21, pg. 5]{s00} that Brun's ordered algorithm is ergodic, conservative, and admits an absolutely continuous invariant measure. The proof uses R\'{e}nyi's condition, which further says that the measure is equivalent to Lebesgue measure. We are very much indebted to Jon Chaika for pointing out to us that we could use the ergodicity of Brun's algorithm  to prove the following theorem. 

\begin{thm}\label{T:ConvergenceInAngle}
There exists a set $K \subset \mS_r$ of full Lebesgue measure such that for each $x \in K$
\begin{equation}\label{eqConvInAngle} 
\bigcap_{j=1}^\infty A^x_j(\RR^r_+) = \span_{\RR_+}\{x\}. 
\end{equation}
\end{thm}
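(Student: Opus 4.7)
The plan is to show that, for almost every $x$, the Hilbert projective diameter of $A_n^x(\RR^r_+)$ tends to $0$ as $n \to \infty$. Since $x \in \bigcap_n A_n^x(\RR^r_+)$ by Equation \ref{eqIntOfCones}, the nested intersection of decreasing convex cones whose projective diameters shrink to zero must collapse to the single ray $\span_{\RR_+}\{x\}$. The two key tools will be the Birkhoff contraction principle for nonnegative matrices on the Hilbert metric of $\RR^r_+$ and Schweiger's ergodicity of Brun's ordered algorithm.

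First I would pass to the ordered algorithm via Lemma \ref{RelatingAlgorithms}. Permutation matrices act isometrically on the Hilbert metric of $\RR^r_+$, so the projective diameter of $A_n^x(\RR^r_+)$ equals that of $(A_n^{\calO(x)})'(\RR^r_+)$, and $\calO$ sends Lebesgue-null subsets of $\mS_r$ to Lebesgue-null subsets of the ordered simplex. It therefore suffices to prove the statement almost everywhere in $\Delta^r$. Now, by Corollary \ref{OrdPos}, for every irrational $x \in \Delta^r$ some $(A_k^x)'$ is strictly positive; since this matrix depends only on the first $k$ symbols of $\omega(x)$, there is a time-$k$ cylinder $C = \Delta^r(i_1, \dots, i_k)$ of positive Lebesgue measure on which the product $B := M_{i_1}' \cdots M_{i_k}'$ is strictly positive. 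The Birkhoff contraction coefficient of such a $B$ satisfies $\kappa(B) < 1$, and one has the standard bounds $\kappa(AA') \leq \kappa(A)\kappa(A')$ and $\kappa(A) \leq 1$ for all nonnegative $A, A'$.

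Next I would invoke the ergodicity of $T'$ together with the absolute continuity of its invariant measure, and apply the Birkhoff ergodic theorem: for Lebesgue-almost every $x \in \Delta^r$, the orbit $\{(T')^n x\}_{n \geq 0}$ enters $C$ with positive frequency, from which I extract infinitely many visit times $n_1 < n_2 < \cdots$ satisfying $n_{i+1} \geq n_i + k$. At each such visit, the next $k$ symbols of $\omega(x)$ equal $(i_1, \dots, i_k)$, so $(A_{n_i+k}^x)' = (A_{n_i}^x)' \cdot B$, and the Birkhoff coefficient is multiplied by a factor of $\kappa(B)$; between visits it cannot increase. Hence $\kappa((A_n^x)') \leq \kappa(B)^i$ for all $n \geq n_i + k$, so $\kappa((A_n^x)') \to 0$, which forces the projective Hilbert diameter of $(A_n^x)'(\RR^r_+)$ to tend to $0$. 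Transferring back through $\calO$ gives the desired conclusion.

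The hard part will be the bookkeeping in the contraction argument: the matrices $M_j'$ are never strictly positive individually, so the contraction only kicks in in chunks of length $k$, and one must be careful to extract a disjoint subsequence of visits so that the factors of $B$ genuinely compose. Verifying that Lemma \ref{RelatingAlgorithms} transfers the almost-sure conclusion cleanly back to $\mS_r$ (where the permutations depend on both $x$ and the step) is the other subtle point, but it reduces to the observation that permutation matrices are Hilbert isometries of $\RR^r_+$ and preserve Lebesgue null sets.
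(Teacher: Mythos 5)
Your proposal is correct and follows essentially the same route as the paper: both arguments fix a cylinder on which the Brun product is strictly positive (Corollary \ref{OrdPos}), use the ergodicity of Brun's ordered algorithm to return to that cylinder infinitely often along well-separated times, and transfer back to the unordered algorithm via the permutation conjugation of Lemma \ref{RelatingAlgorithms}. The only difference is that the paper delegates the final weak-convergence step (a product of nonnegative invertible matrices containing infinitely many copies of a fixed positive block converges projectively) to \cite[Corollary 7.9]{f09}, and carries out the ergodic-theoretic step on the homogeneous system $(B_r,\ol{T'})$ via the projection $p$, whereas you prove the contraction by hand with the Birkhoff coefficient on the Hilbert metric.
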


\begin{rk}
Before proceeding with the proof, we explain what we saw as an impediment to proving that the PF eigenvectors are dense in a simplex. It is possible to have a sequence of invertible positive integer $d$ x $d$ matrices $\{M_i\}_{i=1}^{\infty}$ so that 
$$\bigcap_{k=1}^{\infty} M_1 \cdots M_k (\RR^d_{+})$$ 
is more than just a single ray. The existence of such sequences of matrices was proved in the context of non-uniquely ergodic interval exchange transformations. There are several papers on the subject (including \cite{kn76}, \cite{k77}, \cite{v82}, \cite{m82}). Because it may not be straightforward to the reader outside of the field, we briefly explain how \cite{k77} implies the existence of such a sequence. 

We consider a sequence of pairs of positive integers $\{(m_k, n_k)\}_{k=1}^{\infty}$ satisfying the conditions of \cite[Theorem 5]{k77}. We look at
$$\bigcap_{k=1}^{\infty} A_{m_1,n_1}A_{m_2,n_2} \cdots A_{m_k,n_k} (\RR^d_{+}),$$ 
as defined on pg. 191. Keane explains on pg. 191 that the product of any two successive $A_{m_i,n_i}$ is strictly positive and that always $det(A_{m_k,n_k})=1$. We let $B_k=A_{m_1,n_1}A_{m_2,n_2} \cdots A_{m_k,n_k}$. \cite{k77} projectivizes $B_k$ to $\tilde B_k$, so that $\tilde B_k$ is a map of the 3-dimensional simplex $\mathcal{S}_4$. By Lemma 4, $\{\tilde{B}_k(0,1,0,0)\}_{k=1}^\infty$ is a sequence of vectors converging to a vector whose $2^{nd}$ entry is at least $\frac{1}{3}$. By Lemma 3 (when Theorem 5(ii) holds), $\{\tilde{B}_k(0,0,1,0)\}_{k=1}^\infty$ is a sequence of vectors converging to a vector whose $3^{rd}$ entry is at least $\frac{7}{10}$. But $\frac{1}{3} + \frac{7}{10} >1$ and we have assumed that we are in $\mathcal{S}_4$. So these limits must be distinct vectors.
\end{rk}

\begin{proof}[Proof of Theorem \ref{T:ConvergenceInAngle}]
Choose any $N$-cylinder $\Omega := \Delta^r(i_1, \dots i_N)$ such that the corresponding matrix $Z:=A_N'$ is positive (see Corollary \ref{OrdPos}). Let $\oOmega: = p(\Omega)$. Then $\mu(\oOmega)>0$ where $\mu$ is the Lebesgue measure on $B_r$. 
Since $\oT': B_r \to B_r$ is ergodic with respect to the Lebesgue measure, by Birkhoff's Theorem, there exists a set $\oK \subset B_r$ such that $\mu(\oK)=1$ and so that for each $\bar x \in \oK$ the following set is infinite:
\[ J(\bar x) := \{ n \in \NN \mid \oT'^n(\bar x) \in \oOmega \}. \]
 We let $K' := p^{-1}(\oK)$. Then for each $x' \in K'$ the set $I(x') := \{ n \in \NN \mid (T')^n(x') \in \Omega \}$ is infinite, as $n \in I(x')$ if and only if $n \in J(p(x'))$.

Let $K'' := \mathcal{O}^{-1}(K') \subset \RR^r_+$. If $x \in K''$, then $\mathcal{O}(x) \in K'$, and hence for each $n \in I(\mathcal{O}(x))$ we have
\[ (T')^n(\mathcal{O}(x)) \in \Omega. \]
Let $s \in \NN$ be arbitrary. Consider the first $s$ integers $\{j_1, \dots, j_s\}$ in $I(\mathcal{O}(x))$ satisfying that any difference between two of these numbers is $>N$ (where $N$ came from the original $N$-cylinder we started with). 
Let $N_1 \geq j_s + N + 1$. Then, for each $m>N_1$,
\[ (A^{\mathcal{O}(x)}_m)' = D_1 \cdots D_{j_1-1} Z D_{j_1+N} \cdots D_{j_2-1}  \cdots D_{j_s-1} Z D_{j_s+N} \cdots D_m , \]
where $Z$ is the positive matrix that we started with and the $D_i$ are the $M'_i$'s of Brun's ordered algorithm.
The matrix $Z$ appears in this product $s$ times.  
By Lemma \ref{RelatingAlgorithms}, for each $x \in K''$ and each $m \in \NN$, there exist permutation matrices $P_{i_1}, P_{i_2}$ so that 
\[ A^x_m = P_{i_1} (A^{\mathcal{O}(x)}_m)' P_{i_2}. \]
Hence, for this arbitrary $s$ we have found an $N_1(s) \in \NN$ so that for all $m>N_1$, 
\[ A^{x}_m = P_1 D_1 \cdots D_{j_1-1} Z D_{j_1+N} \cdots D_{j_2-1}  \cdots D_{j_s-1} Z D_{j_s+N} \cdots D_m P_2, \]
where $Z$ appears in this product at least $s$ times and the other matrices in this product are all invertible nonnegative integer matrices. 
Then, by ~\cite[Corollary 7.9]{f09}, Equation \ref{eqConvInAngle} holds true for each $x \in K''$. 

Let $q:\RR^r_+ \to \mS_r$ be the projection to the simplex in the positive cone (see (\ref{e:Projectionq})) and let $\mu'$ be the Lebesgue measure on $\mS_r$. Denote $K= q(K'')$. Then, since $\mu(\oK)=1$, we arrive at $\mu'(K)= 1$, as desired. 
\end{proof}

\begin{df}\label{PFPos}
We let $\mP_r$ be defined as:
$$\mP_r  :=\{ v_{PF} \in \mS_r \mid v_{PF}  \text{ is the PF eigenvector for some positive Brun matrix } M \in \B_r\}.$$ 
\end{df}

\begin{df}
Suppose 
\begin{equation}\label{eqA1}
A=M_{i_1,j_1} \cdots M_{i_n,j_n} \in \B_r.
\end{equation}
Then each $M_{i_k,j_k}$ is an unfolding matrix as in (\ref{eqTinverse}) and we can associate to it the fold-automorphism $f_{i_kj_k}$ (see Definition \ref{FoldAutomorphism}). Notice that $M_{i_k,j_k}$ is in fact the transition matrix for $f_{i_kj_k}$. To each $A \in \B_r$ as in (\ref{eqA1}) we associate the automorphism
\begin{equation}\label{mautoFoldDecomp}
\mauto_A = f_{i_n,j_n} \circ \dots \circ f_{i_1,j_1},
\end{equation}
whose transition matrix is $A$. We also call this automorphism $\mauto_v$, where $v$ is the PF eigenvector of $A$.
\end{df}

\begin{mainthmD}\label{thm_PF_dense}
For each $r \geq 2$, let $S_{l_1}^r$ be the set of unit vectors according to the $l_1$ metric in $\RR^r_+$. The set of Perron-Frobenius eigenvectors of matrices arising as the transition matrices of a positive automorphisms in $\text{Aut}(F_r)$ is dense in the $S_{l_1}^r$.
\end{mainthmD}

\begin{proof}
It will suffice to shows that $\mP_r$ is dense. By Theorem \ref{T:ConvergenceInAngle}, we know that Brun's algorithm converges in angle on a dense set of points $K \subset \mS_r$. Thus, given any $x \in \mS_r$ and $\eps > 0$, there exists some $x' \in B(x, \frac{\eps}{2}) \cap K$ on which Brun's algorithm weakly converges. Hence, there exists some $N$ such that, for each $n \geq N$, we have that $q(A^{x'}_nC^r) \subset B(x', \frac{\eps}{2})$. By possibly replacing $N$ with a larger integer, we can further assume that the $A^{x'}_n$ are positive (see Proposition \ref{BecomesPos}), so have PF eigenvectors. And the PF eigenvector $v_n$ for each $A^{x'}_n$ is contained in $A^{x'}_nC^r$ and hence is in $B(x', \frac{\eps}{2})$. Hence, there exists a $v_i \in\mP_r$ such that $d(v_i, x) < \eps$. 
\end{proof}

\section{Dense Geodesics in theta complexes}{\label{S:dg}}

In this section we construct a geodesic ray dense in the theta subcomplex whose top-dimensional simplex has underlying graph as in the right-hand graph of Figure \ref{thetaGraph1}. One could consider this a warm-up to the proof of Theorem B or interesting in its own right, as this is the minimal subcomplex containing the projection of the Cayley graph for $\out$.

\subsection{Construction of the fold ray}{\label{ss:SnakePath}}
We enumerate the vectors in $\mP_r$ from Definition \ref{PFPos} as $\{v_i\}_{i=1}^\infty$. For each $i$ there exists a positive matrix $A_{v_i}$ in $\B_r$ so that $v_i$ is the PF eigenvector of $A_{v_i}$. Further, there exists an automorphism $\mauto_{v_i}$ in $\A_r$ corresponding to $A_{v_i}$ (Equation \ref{mautoFoldDecomp}).
We also enumerate all possible fold automorphisms, as in (\ref{foldsFromMatrix}), by $h_1, \dots, h_{r(r-1)}$. 

We construct a sequence that contains each $\mauto_{v_i}^k \circ h_j$ with $i,k,j \in \NN$:
\begin{equation}{\label{e:snake}}
\mauto_{v_1} \circ h_1,~
\mauto_{v_2} \circ h_1,~ 
\mauto_{v_1}^2 \circ h_2,~
\mauto_{v_1}^3 \circ h_3,~
\mauto_{v_2}^2 \circ h_2,~
\mauto_{v_3} \circ h_1,~
\mauto_{v_4} \circ h_1 \dots
\end{equation}

Decompose each $\mauto_{v_i}$ in (\ref{e:snake}) according to (\ref{mautoFoldDecomp}), to obtain an infinite sequence of fold automorphisms 
%\inlineequation[f_auto_of_ray]{\{\Phi_k\}_{k=1}^{\infty}}. \quad 
$\{\Phi_k\}_{k=1}^{\infty}~\refstepcounter{equation}(\theequation)\label{f_auto_of_ray2}$.
Orienting $R_r$ and identifying its positive edges with the basis, $\Phi_1$ can be represented by the combinatorial fold $f_1$. The new graph $R_r$ inherits an orientation and an enumeration of edges and thus we may continue inductively to define the combinatorial fold $f_{k} 
\from G_{k-1} \to G_{k}$. The enumeration of edges induces a homeomorphism $n_k \from \hat \sig_{(G_k, f_k \circ \dots \circ f_1)} \to \mS_r$.

\begin{lem}\label{BasePoint}
Let $\rsimp$ be the base simplex, then 
there exists a point $x_0 \in \rsimp$ so that, for each $k \geq 0$, the fold $f_{k+1}$ is allowable in the folded rose after performing the sequence of folds $f_1, \dots , f_k$.
Moreover, this folded rose is
$x_k = n_k^{-1}(T_k \circ \dots \circ T_1 (n_0(x_0)))$ in the simplex $\sig_{(G, f_k \circ \dots \circ f_1 \circ m)}$.
\end{lem}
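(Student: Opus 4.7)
The plan is to reduce the assertion to Corollary \ref{CorInitialPoint}. That corollary supplies a starting vector $w_0 \in \RR^r_+$ making the entire infinite fold sequence $\{f_k\}$ allowable at $x_0 = n_0^{-1}(w_0)$, provided the partial products of the associated unfolding matrices $\{M_k\}$ become and remain strictly positive from some index onward. Once such a $w_0$ is produced, the point $x_0$ lies in $\rsimp$, and the ``moreover'' clause identifying $x_k = n_k^{-1}(T_k \cdots T_1(n_0(x_0)))$ is a direct consequence of Lemma \ref{L:SequenceAllowable}. So the real content of the lemma is verifying the positivity hypothesis for $\{M_k\}$.

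For this I would first pin down the index $N \in \NN$ at which the fold decomposition of the first automorphism of the snake sequence (\ref{e:snake}), namely $\mauto_{v_1} \circ h_1$, terminates inside (\ref{f_auto_of_ray2}). Using the identity $M(g \circ f) = M(f) \cdot M(g)$ for transition matrices of compositions, applied iteratively along the fold decomposition, the partial product $M_1 M_2 \cdots M_N$ coincides with the transition matrix of $\mauto_{v_1} \circ h_1$, which equals $M(h_1) \cdot A_{v_1}$. The Brun matrix $A_{v_1}$ is strictly positive by construction, while the unfolding matrix $M(h_1)$ has $1$'s on its diagonal, so each row of $M(h_1)$ contains a positive entry; consequently every entry of $M(h_1) \cdot A_{v_1}$ is a strictly positive sum of positive numbers.

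Next I would verify persistence of positivity: that $M_1 \cdots M_k$ remains strictly positive for every $k \geq N$. This reduces to the elementary observation that for any unfolding matrix $M_{ij}$, each column contains the diagonal entry $1$, so right-multiplying a strictly positive matrix $P$ by $M_{ij}$ gives another strictly positive matrix (each column of $P M_{ij}$ is a nonnegative combination of the strictly positive columns of $P$ with at least one coefficient equal to $1$). Inducting on $k \geq N$ establishes the claim.

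The only conceptual subtlety is keeping the order of matrix multiplication consistent with the order of fold application, and in particular noting that the composition convention $\mauto_A = f_{i_n,j_n} \circ \cdots \circ f_{i_1,j_1}$ (with transition matrix $A = M_{i_1,j_1} \cdots M_{i_n,j_n}$) matches the convention that successive folds in (\ref{f_auto_of_ray2}) correspond to right multiplication of partial products. Beyond this bookkeeping there is no substantive difficulty; the proof is an assembly of Corollary \ref{CorInitialPoint} and Lemma \ref{L:SequenceAllowable} once the positivity of $M_1 \cdots M_N$ (and hence of all later partial products) has been checked as above.
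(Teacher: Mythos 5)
Your proposal is correct and follows essentially the same route as the paper, which simply cites Corollary \ref{CorInitialPoint} and notes that the positivity hypothesis follows from the positivity of the Brun matrices $A_{v_i}$. You have merely filled in the details the paper leaves implicit (locating the index $N$ at the end of the first block, checking that right-multiplication by unfolding matrices preserves strict positivity, and invoking Lemma \ref{L:SequenceAllowable} for the ``moreover'' clause), all of which check out.
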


\begin{proof} 
This follows from Corollary \ref{CorInitialPoint}. Note that the positivity condition follows from the positivity of the matrices $A_{v_i}$.
\end{proof}

\smallskip

\begin{rk}
~\cite[Corollary 7.9]{f09} implies the metric on $x_0$ is unique, as the same positive matrix occurs in infinitely many of the products $M_{i_1} \cdots M_{i_k}$.
\end{rk}

\begin{df}[$\mR$]{\label{d:RayR}}
We let $\mR$ denote the infinite fold ray (Definition \ref{FoldPath})  initiating at $x_0$ and defined by the sequence of folds $\{ f_i\}$ as constructed above.
\end{df}
 
\begin{mainthmC}
For each $r \geq 2$, there exists a fold line in $\mathcal{T}_r$ that projects to a Lipschitz geodesic fold ray in $\mathcal{T}_r/\out$.
\end{mainthmC}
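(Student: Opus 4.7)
The plan is to verify three properties of the ray $\mR$ from Definition \ref{d:RayR}: (i) it is a well-defined fold ray in $\mathcal{T}_r$, (ii) it is a Lipschitz geodesic, and (iii) its projection to $\mathcal{T}_r/\out$ is dense. Property (i) follows almost directly from the construction: Lemma \ref{BasePoint} supplies a basepoint $x_0$ so that the entire fold sequence is allowable, and the intermediate graphs during each rose-fold (two vertices, $r+1$ edges, with one vertex of valence three and the other of valence $2r-1$) are precisely the underlying graphs of the non-rose simplices of $\mathcal{T}_r$. Property (ii) is a direct application of Corollary \ref{PositiveFoldLineGeodesics}: every $\mauto_{v_i}$ and every $h_j$ is a composition of left Nielsen generators $X_a\mapsto X_bX_a$, all positive automorphisms, so by Observation \ref{directionMatchingIndOrientation} each $\Phi_k$ in the fold decomposition is direction matching with respect to the inherited orientation, and the conjugacy class of $X_1$ provides a legal witness throughout $\mR$.

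For property (iii), the key input is the density of $\mP_r$ in the $l_1$-simplex (Theorem D) together with the projective contraction of positive matrices. Given a target point $\bar p$ in the rose simplex of $\mathcal{T}_r/\out$ with length vector $\ell$ and $\eps>0$, I would pick $v_i\in\mP_r$ with $\|\ell-v_i\|<\eps/2$ and then $k$ so large that the projective diameter of the cone $A_{v_i}^k(\RR^r_+)$ is below $\eps/2$; since $A_{v_i}$ is positive with Perron--Frobenius eigenvector $v_i$, this cone shrinks to the ray spanned by $v_i$ as $k\to\infty$. By construction, $\mauto_{v_i}^k\circ h_1$ appears in the enumeration (\ref{e:snake}), and by the order convention of (\ref{mautoFoldDecomp}) its fold realization performs the $h_1$-fold first and then the $k$ iterated blocks of $\mauto_{v_i}$-folds. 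The rose $y_1$ that $\mR$ visits between the $h_1$-fold and the first $\mauto_{v_i}$-fold has length vector in $A_{v_i}^k(\RR^r_+)$, by Lemma \ref{L:SequenceAllowable} applied to the tail of the fold sequence, hence its projection is within $\eps$ of $\bar p$. A parallel argument handles a point in the interior of a theta simplex of $\mathcal{T}_r/\out$: every such point arises as an intermediate graph during a single combinatorial fold on a pre-rose, so approximating the pre-rose by the same mechanism and choosing $j$ so that $h_j$ is the targeted fold, the continuous one-parameter family of thetas traced out by $\mR$ during that fold passes within $\eps$ of the target.

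The main obstacle I anticipate lies in the theta-simplex step: while the rose-density argument pins down precisely which rose $y_1$ appears on $\mR$, a theta approximation requires simultaneous control of the pre-rose and of the specific fold $h_j$ to be performed there, and the pre-rose depends on the entire subsequent tail of the fold sequence. The fact that every triple $(i,k,j)$ occurs in (\ref{e:snake}) should suffice, but care is needed to extract enough control from the nested-cone structure. A secondary but essential step is verifying that the projection of $\mR$ is a Lipschitz geodesic (not merely a path whose image is geodesic upstairs); this follows because the witness for Corollary \ref{PositiveFoldLineGeodesics} is a conjugacy class whose length is invariant under the $\out$-action, so the stretch computation, and hence the Lipschitz distance, descends faithfully to $\mathcal{T}_r/\out$.
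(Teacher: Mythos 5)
Your proposal follows the paper's proof essentially verbatim: the same ray $\mR$, geodesicity via Corollary \ref{PositiveFoldLineGeodesics} with the positive loop $X_1$ as witness, and density via Theorem D together with the projective contraction of the positive matrices $A_{v_i}$.

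The one place you hesitate --- the theta-simplex step --- is not a genuine gap, and the paper resolves it by exactly the mechanism you describe, just anchored at the post-fold rose rather than the pre-fold rose. A generic theta point $a$ (perturb to rationally independent edge lengths) lies on a basic fold segment $[x,z]$ of some combinatorial type, say $\Phi_{1,2}$, and this segment admits an explicit linear parametrization $\hat\gamma_{1,2}(z,t)$ in terms of its \emph{terminal} rose $z$ alone; this parametrization is continuous in $z$ for $t$ bounded away from $1$, so there is an $\eps'$ such that any $\Phi_{1,2}$-segment terminating $\eps'$-close to $z$ meets $B(a,\eps)$. Now choose $v_i\in\mP_r$ with $v_i\in n_0(B(z,\eps'))$ and $k$ large. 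In $\mR$, the rose $z_k$ reached immediately after the $h_j=\Phi_{1,2}$ fold and immediately before the $\mauto_{v_i}^k$ block has length vector in $q\bigl(A_{v_i}^k(\RR^r_+)\bigr)$ \emph{regardless of the subsequent tail} --- this is precisely the nested-cone observation you already invoke for the rose case, via Lemma \ref{L:SequenceAllowable} --- hence, after translating by $\Psi_k^{-1}$ (Lemma \ref{EnuLemma}), it is $\eps'$-close to $z$. The preceding theta segment $[x_k,z_k]\cdot\Psi_k^{-1}$ therefore passes through $B(a,\eps)$. In particular no independent control of the pre-rose is required: it is determined by the terminal rose and the combinatorial fold, so the worry that it "depends on the entire subsequent tail" dissolves.
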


\begin{proof}
We recall $\mR$ from Definition \ref{d:RayR}. It is clear that $\mR$ is contained in $\calT_r$. $\mR$ is a geodesic ray by Corollary \ref{PositiveFoldLineGeodesics}.

First recall that the simplicial and Lipschitz metrics on $\os$ induce the same topology on $\os$. Hence, it suffices to prove density in the simplicial metric.

Let $\bar a \in \mathcal{M}_r$ and let $\eps>0$ be arbitrary. Lift $\bar a$ to a point $a \in \os$ in the interior of a top dimensional simplex $\tau$.
Let $y \in \tau$ be a point such that $d_s(a,y)<\eps$, and so that its coordinates are rationally independent. The point $y$ lies on a fold line $\calF_{i,j}$ from a point $x$ on one face of $\tau$ to a point $z$ in another face. Without generality loss assume $z \in \sig_0$, the base simplex. 
Moreover, without generality loss assume the combinatorial fold is $f_{1,2}$. 
Let $e_1' , \dots , e_r'$ denote the edges of $R_r$ as numbered in $\sig_0$. Enumerate the edges of $G_\tau$, the underlying graph of $\tau$, as $e_1, \dots e_{r+1}$, so that if  $c \from G \to R$ is the map collapsing $e_2$ in $G$, then $c(e_1) =e'_1$ and $c(e_i) = e_{i-1}'$ for each $i \geq 3$. 
We parameterize the path $[x,z]$ as an unfolding path $\hat \gamma_{1,2}(z,t)$ in unprojectivized Outer Space as follows. Let $\zeta = n_0(z) \in \calS_r$, then 
\[ n_{\tau}(\hat \gamma_{1,2}(z,t)) = (\zeta_1+t, t, \zeta_2-t, \zeta_3, \dots, \zeta_r).   \]
We note that $z = \hat \gamma_{1,2}(z,0)$, $x = \frac{1}{1-\zeta_2} \hat \gamma_{1,2}(z,\zeta_2)$, and for some $0<t_0 \leq \zeta_2$ we have $y = \frac{1}{1-t_0}\hat \gamma_{1,2}(z,t_0)$. 
Since the function $\gamma_{1,2}(z,t) = \frac{1}{1-t}\gamma_{1,2}(z,t)$ is continuous when $t$ is bounded away from 1, there exists an $\eps'>0$ so that for each $w \in B(z, \eps')$, the path $\gamma_{1,2}(w,t)$ passes through  $B(a,\eps)$. 

Now, by the density of PF eigenvectors (Theorem $D$), there exists a vector $v_i \in \mP_r$ such that $v_i \in n_0(B(z,\eps'))$. Let $\eps''>0$ be such that $n_0(B(z, \eps'))$ contains $B(v_i, \eps'')$. 
Let $K$ be large enough so that $q(A_{v_i}^k (\RR^{r}_+)) \subset B(v_i,\eps'')$ for all $k>K$.

Let $\Psi_k$ be the composition of $\Phi_1, \Phi_2, \dots$ from (\ref{f_auto_of_ray2}) up to the first fold automorphism in the decomposition of $g_{v_i}^k$. 
Choose $k>K$ so that the last fold automorphism in $\Psi_k$ is $\Phi_{1,2}$. 
Let $z_k$ be the rose-point in $\mR$ directly after performing the fold sequence of $\Psi_k$. Let $x_k$ be the rose-point directly before $z_k$. 

We claim that $\mR \cdot \Psi_k^{-1}$ is $\eps$-close to $a$. To see this recall that directly after $z_k$ in $\mR$ we perform the folds corresponding to $g_{v_i}^k$. Let $w_k$ be the rose-point in $\mR$ directly after these folds and let $n_s, n_m$ be the appropriate idenitifications of the simplices containing $z_k$ and $w_k$ with $\mS_r$. Then $n_s(z_k) = A_{v_i}^k(n_m(w_k))$. Hence $n_s(z_k)$ is $\eps''$-close to $v_i$. Thus  $n_s(z_k)$ is $\eps'$-close to $n_0(z)$. 

The point $z_k$ is in $\sig_0 \cdot \Psi_k$. Hence, by Lemma \ref{EnuLemma}, $n_0(z_k \cdot \Psi_k^{-1}) = n_s(z_k)$. Thus the fold path $[x_k, z_k]\cdot \Psi_k^{-1}$, which is a path induced by the fold $\Phi_{1,2}$, satisfies that its endpoint, $z_k\cdot \Psi_k^{-1}$, is $\eps'$-close to $z$. Thus, this fold line intersects $B(a,\eps)$, as desired. 

\end{proof}

%%%%%%%%%%%%%%%%%%%%%%%
%%%%%%%%%%%%%%%%%%%%%%

\section{Finding a rose-to-graph fold path terminating at a given point}\label{S:dgAll}

This section is the first step in our expansion of our methods of Section \ref{S:dg} to obtain a dense geodesic ray in the full quotient of reduced Outer Space. 

In this section and in the next one, we find, for a dense set of points $y$ in reduced Outer Space, roses $x,z$ so that $y \in [x,z]$ and the difference in marking map $x \to z$ is positive. The path $[x,z]$ will be called a positive rose-to-rose fold line. 
It will replace our basic fold lines $\mathcal{F}_{i,j}$ in the proof of Theorem C. 

A rose-to-rose fold path will have two parts: a rose-to-graph part $[x,y]$ and a graph-to-rose part $[y,z]$. We begin in (Subsection \ref{graphDecompToLoops}) with decomposing the graph of $y$ into a union of positive loops (Lemma \ref{OrientEdges}). This allows us to find the rose $x$. 

\subsection{Decomposing a top graph into a union of directed loops}\label{graphDecompToLoops}

\begin{df}[Paths and distance in trees]
Let $T$ be a tree. Then for each pair of points $p,q$ in $T$ there is a unique (up to parametrization) path from $p$ to $q$. We denote its image by $[p,q]_T$ and, when there is no chance for confusion, we drop the subscript $T$. Given a tree $T$, let $d_T(\param, \param)$ denote the distance in $T$.
\end{df}

\begin{df}[Rooted trees]
A \emph{rooted tree} is a tree $T$ with a preferred vertex $v_0$ called a \emph{root}. 
A rooted tree can be thought of as a finite set with a partial order that has a minimal element - which is the root.
We will refer to the partial ordering induced by the pair $(T,v_0)$ as $\leq_T$, i.e. $w \leq_T w' \implies w \in[v_0,w']_T$.
\end{df}

\begin{remark}
In the figures to follow the root will always appear at the bottom.
\end{remark}

We use special spanning trees to guide us in finding the loop decomposition of $G$:

\begin{df}[Good tree]
Let $T$ be a rooted tree in $G$ and $e=(v,w)$ an edge. We call $e$ \emph{bad} if $v \nleq_T w$ and $w \nleq_T v$. Let $B(T)$ be the number of bad edges in $G$ with respect to $T$. When $B(T)=0$ we call $T$ \emph{good} (sometimes elsewhere called \emph{normal}).
\end{df}

We prove a somewhat stronger version of ~\cite[Proposition 1.5.6]{Diestel}.

\begin{prop}\label{goodTree}
For each $G \in \os$, and for each $E \in E(G)$ that is not a loop, there exists a rooted spanning tree $(T, v_0)$ so that $B(T)=0$ and $E \in E(T)$ and $v_0 = ter(E)$. Moreover, when $G$ is trivalent with no separating edges, $T$ can be chosen so that $deg_T(v_0)=1$.
\end{prop}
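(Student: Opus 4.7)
My plan is to construct $T$ by a depth-first search (DFS) from $v_0 = ter(E)$, forced to traverse $E$ as its very first tree edge. Since $E$ is not a loop, $i(E) \neq v_0$ is an unvisited neighbor of $v_0$, so this first step is legal; thereafter I let DFS proceed in the usual fashion. The resulting spanning tree $T$ contains $E$, has root $v_0$, and enjoys the standard DFS property that every non-tree edge is a ``back edge,'' i.e.\ has one endpoint on the tree path from $v_0$ to the other. This is precisely the condition $B(T) = 0$, settling the first assertion.

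For the ``moreover'' claim, let $G^*$ denote the subgraph of $G$ obtained by deleting every edge incident to $v_0$ other than $E$; then $v_0$ has degree $1$ in $G^*$. I will show $G^*$ is connected, after which DFS on $G^*$ from $v_0$ starting along $E$ produces a good spanning tree $T$ of $G^*$, and hence of $G$, with $deg_T(v_0) = 1$. The tree $T$ remains good for the ambient graph $G$ as well: every edge I deleted is either a loop at $v_0$ (trivially comparable via $v_0 \leq_T v_0$) or runs from the root $v_0$ to some vertex $u$ with $v_0 \leq_T u$, so no deleted edge is bad.

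It remains to verify that $G^*$ is connected. Since $E$ contributes $1$ to $deg_G(v_0) = 3$, the other edges at $v_0$ form either a single loop $L$ (whence $G^* = G - L$ is trivially connected) or two distinct non-loop edges $E_1, E_2$. In the latter case I argue by contradiction: if $G - \{E_1, E_2\}$ splits into components $C_1 \ni v_0$ and $C_2$, then since $G$ has no separating edge both $ter(E_1)$ and $ter(E_2)$ must lie in $C_2$ (otherwise exactly one of $E_1, E_2$ would be the only edge crossing the cut, hence a bridge of $G$). Consequently every edge of $G$ between $C_1$ and $C_2$ is incident to $v_0$, and removing the single edge $E$ disconnects $C_1 \setminus \{v_0\}$ (which contains $i(E)$ and is nonempty) from $\{v_0\} \cup C_2$, exhibiting $E$ as a separating edge of $G$ and contradicting the hypothesis. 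The main step requiring care is precisely this cut-versus-bridge argument, which is what converts the two-edge-connectivity assumption into the single fact I need; everything else follows from standard properties of DFS trees.
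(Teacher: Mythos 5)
Your proof is correct, and it takes a genuinely different route from the paper's. The paper proves the first assertion by an edge-swap argument: it starts from an arbitrary spanning tree containing $E$ (built componentwise on $G-\{v_0\}$, with $deg_{T_1}(v_0)=1$ arranged from the outset), defines a lexicographic complexity counting bad edges by their depth, and shows each swap $T \mapsto T\cup\{e\}\setminus\{e_2\}$ strictly decreases it while preserving $E\in E(T)$ and $deg_T(v_0)=1$. You instead invoke the standard fact that a DFS tree is normal (all non-tree edges are back edges), which is exactly $B(T)=0$, and forcing the first DFS step along $E$ is legitimate since $E$ is not a loop; this is shorter and avoids the complexity bookkeeping (it is essentially the textbook proof of \cite[Proposition 1.5.6]{Diestel} that the paper says it strengthens). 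For the ``moreover'' clause the two arguments diverge more substantially: the paper shows the tripod at $v_0$ is non-separating, so $G-\{v_0\}$ is connected and the initial choice $deg_{T_1}(v_0)=1$ survives the swaps; you delete the two extra edges at $v_0$, prove the remainder $G^*$ is connected by a cut-versus-bridge argument, and observe that the deleted edges are automatically good because the root is $\leq_T$ every vertex. Your bridge argument is sound; the only point worth making explicit is that $G-\{E_1,E_2\}$ could a priori have three components, in which case each of $C_2,C_3$ would be joined to the rest by a single one of $E_1,E_2$, again producing a bridge — this is covered by the reason you give but is stated as if there were exactly two components. (The loop subcase you treat is in fact vacuous under the no-separating-edge hypothesis, as the paper notes, but handling it does no harm.)
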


\begin{proof}
For an edge $e=(v,w)$ in $G$,
the union $[v_0,v]_T\cup[v_0,w]_T$ forms a tripod. Denote the middle vertex of this tripod by $q_e$,
i.e. $q_e$ satisfies $[v_0, q_e]_T = [v_0,v]_T \cap [v_0,w]_T$.
Let $d(e) = d(v_0, q_e)$.
We define the complexity $C(T) = (n(T),m(T))$ of $T$ by defining $n(T)$ and $m(T)$ as follows: 
\begin{equation*}
n(T) = \left\{
\begin{array}{ll}
- \min\{d(e) \mid e\text{ is bad} \} & \text{if } B(T) \neq 0\\
-\infty & \text{otherwise}
\end{array}
\right.
\end{equation*}
and let 
\[ m(T) = \# \{ e \in G \mid  e \text{ is bad and } d(e) = -n(T)\}. \] 
In particular, $m(T)=0$ if there are no bad edges, so that, if there are no bad edges, $C(T) = (- \infty, 0)$.

The complexity is ordered by the lexicographical ordering of the pairs.
Note that for $T$ bad, the complexity is bounded from below.
Indeed, when $G \in \os$, we have $|V(G)| \leq 2r-2$, so $n(T) \geq 2-2r$, hence $C(T) \geq (2-2r,1)$.
We will show that we can always decrease the complexity, so that $T$ can no longer have a bad edge.

Given an edge $E$ in $G$, we let $v_0 = ter(E)$. Let $G_1, \dots , G_N$ denote the components of $G-\{v_0\}$ with $\{v_0\}$ added back to each component separately. Thus $G= \cup_{i=1}^N G_i/\{v_0\}$. We will construct a good tree $T_i$ (rooted at $v_0$) in each $G_i$. Then $T = \cup_{i=1}^N T_i$ will also be a good tree rooted at $v_0$, since all edges $e \in E(G-T)$ have endpoints inside some $G_i$. 

Let $G_1$ be the component
containing the edge $E$. For $i \neq 1$ we choose a spanning tree $T_i$ in $G_i$ arbitrarily. For $i=1$ we construct a spanning tree $T_1$ in $G_1$ such that $deg(v_0)_{T_1}=1$: Denote by $v_1$ the endpoint of $E$ distinct from $v_0$. We define $T_1$ to be the union of $E$ with a spanning tree in $G_1-E$. 

We will modify each $T_i$ separately to make it a good tree in $G_i$. 
Suppose $e=(v,w)$  is a bad edge in $G_i$ realizing the minimal distance $-n(T_i)$.
Let $e_1$ be the first edge of $[v,q_e]_{T_i}$, and let $e_2$ be the last edge of $[v,q_e]_{T_i}$.
Let $T_i' = T_i \cup \{e\} \setminus \{e_2\}$.
We claim that $e_2 \neq E$. If $G_i \neq G_1$, then this is obvious. Otherwise, $deg_{T_1}(v_0)=1$ and $deg(q_e) \geq 2$, so that $v_0 \neq q_e$ and hence $e_2 \neq E$. Therefore, $E \in E(T_1')$ after the move and still $deg_{T_1'}(v_0)=1$. 
Next, notice that some bad edges of $T_i$ have become good in $T_i'$, for example $e$ is no longer bad, as is any edge from a vertex in $[v,q_e]_{T_i}$ to a vertex in $[q_e,w]_{T_i}$. 
Some bad edges remain bad. But the only edges that were good and became bad are edges with one endpoint in $[v,q_e]_{T_i}$ and one endpoint in a component of $T_i \setminus \{v\}$ that does not contain $e_1$ or $w$. For such an edge $f$, we have that $q_f = v$, so $d(f) = d(v, v_0)_{T_i'} > d(q_e, v_0)_{T_i'} = d(e)$ and the complexity has decreased.

If $G$ is trivalent with no separating edges, then no edge is a loop. This implies that there are three edges $E,E', E''$ incident at $v_0$. If the tripod $E\cup E' \cup E''$, was separating then each of its edges would be separating - a contradiction. Therefore, $G=G_1$ in this case and the proof is complete.
\end{proof}

\begin{lem}\label{OrientEdges}
If $G$ is a trivalent graph with no separating edges and $\{E,E'\}$ is a turn at the vertex $v_0$, then there exists an orientation on the edges of $G$ so that,
\begin{enumerate}
\item $G = \cup_{i=1}^r \al_i$ where each $\al_i$ is a positive embedded loop, 
\item $\al_i \cap ( \cup_{j=1}^{i-1} \al_j )$ is a connected arc containing $v_0$ for each $i$, and
\item $v_0$ is the terminal point of both $E$ and $E'$.
\end{enumerate}
\end{lem}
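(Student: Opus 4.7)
The plan is to prove the lemma by induction on the rank $r$ of $G$, simultaneously constructing the orientation on $E(G)$ and the loops $\al_1, \ldots, \al_r$. A preliminary observation: $G$ has no loops at any vertex, since a loop at $v$ would force the unique non-loop edge at $v$ (by trivalence) to be a cut edge, contradicting 2-edge-connectedness. In particular, $v_0$ has three distinct incident edges $E, E'$, and a third edge $E''$, all non-loops.

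In the base case $r=2$, $G$ is the theta graph on two vertices $v_0, v_1$ joined by the three parallel edges $E, E', E''$. Orient $E$ and $E'$ to terminate at $v_0$, and orient $E''$ to start at $v_0$. Take $\al_1 = E'' \cdot E$ and $\al_2 = E'' \cdot E'$ (positive loops in the new orientation). These are positive embedded loops through $v_0$ covering $G$, and $\al_2 \cap \al_1 = E''$ is a connected arc containing $v_0$. Conditions (1)--(3) are immediate.

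For the inductive step $r \geq 3$, the strategy is to pass to a smaller trivalent 2-edge-connected graph $G'$ of rank $r-1$, apply induction, and then adjoin a new loop $\al_r$. Apply Proposition \ref{goodTree} to obtain a good rooted spanning tree $T$ at $v_0$ with $\deg_T(v_0) = 1$ and with $E''$ as the unique tree edge at $v_0$; orient all tree edges of $T$ away from $v_0$, so $E''$ starts at $v_0$ while $E, E'$ are non-tree edges whose orientations will be determined below. Since $r \geq 3$, there are at least $r - 2 \geq 1$ non-tree edges off $v_0$; choose $e$ among them so that the graph $G'$ obtained by deleting $e$ and un-subdividing the two resulting degree-$2$ vertices is still trivalent and 2-edge-connected. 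The edges $E$ and $E'$ (possibly each merged with a neighboring edge if that neighbor happens to be incident to an endpoint of $e$) remain incident to $v_0$ in $G'$, so the turn $\{E, E'\}$ descends to a turn at $v_0$ in $G'$. Apply the inductive hypothesis to $(G', v_0, \{E, E'\})$ to obtain an orientation on $E(G')$ and loops $\al'_1, \ldots, \al'_{r-1}$ satisfying (1)--(3). Lift the orientation back to $E(G) \setminus \{e\}$ by splitting each merged edge into its two original constituents with consistent directed orientation, and define $\al_i$ for $i<r$ as the corresponding lifts. Finally, construct $\al_r$ as an embedded positive loop through $v_0$ consisting of $e$ together with two arcs in $G'$ from $v_0$ to each endpoint of $e$, concatenated at $v_0$ into a single embedded arc. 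Orient $e$ so that $\al_r$ is positive; then $\al_r \cap \bigcup_{j<r}\al_j$ is exactly this concatenated arc, verifying condition (2).

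The main obstacle is establishing the existence of the non-tree edge $e$ off $v_0$ such that $G'$ is trivalent and 2-edge-connected. Two issues can arise: (a) if $e$ lies in a 2-edge-cut of $G$, removing $e$ creates a bridge in $G'$, destroying 2-edge-connectedness; (b) if both endpoints of $e$ are adjacent to $v_0$ via parallel edges (so that un-subdivision merges $E$ with $E'$), a loop appears at $v_0$ in $G'$, which by the preliminary observation contradicts 2-edge-connectedness. The needed existence statement follows from a structural analysis of the 3-edge-connected components of $G$ together with the local structure at $v_0$, using $r \geq 3$ to rule out the pathological configurations. A secondary technical point is choosing the two arcs comprising $\al_r$ so that their union is a single embedded arc through $v_0$; this is arranged by exploiting the inductive connected-arc structure around $v_0$ provided by $\al'_1, \ldots, \al'_{r-1}$.
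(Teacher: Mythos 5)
Your overall strategy (induction on the rank, deleting a non-tree edge $e$ away from $v_0$, suppressing the valence-2 vertices, applying the inductive hypothesis to the smaller cubic graph, and then adjoining one new loop through $e$) is genuinely different from the paper's argument, which fixes a good spanning tree once and inducts over the non-tree edges of the same graph. In principle an ear-decomposition argument of this kind could work, but as written your proof has two gaps, and the second one is exactly where the real content of the lemma lies.

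First, the existence of an edge $e$ not incident to $v_0$ such that $G-e$, after suppression, is again trivalent with no separating edges is only asserted (``follows from a structural analysis of the 3-edge-connected components''). This is a nontrivial claim --- an edge lying in a 2-edge-cut cannot be deleted, and you must also avoid creating a loop --- and it needs an actual argument. Second, and more seriously, your construction of $\al_r$ requires a positive (directed) arc from $v_0$ to one endpoint of $e$ and a positive arc from the other endpoint back to $v_0$ whose union is an \emph{embedded} arc through $v_0$. The only candidates are initial and terminal segments of the already-constructed loops $\al_1,\dots,\al_{r-1}$, and conditions (1)--(3) of the lemma alone do \emph{not} guarantee that some such pair is disjoint away from $v_0$: two loops may share a long arc through $v_0$, and the initial segment of one can overlap the terminal segment of another. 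This is precisely why the paper strengthens the induction hypothesis with an extra invariant (condition (5) of Lemma \ref{OrientEdges2}: the ``highlighted subpaths'' of aligned tree edges share no half-edges), which is what makes the new loop embedded at each step. Your phrase ``exploiting the inductive connected-arc structure around $v_0$'' defers exactly this difficulty; to complete your route you would have to formulate and carry an analogous disjointness invariant through your rank induction, and verify that it survives both the suppression/lifting of edges and the addition of $\al_r$.
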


To prove Lemma \ref{OrientEdges}, we use Proposition \ref{goodTree} to find a good tree $(T,v_0)$ containing $E$ and so that $v_0$ has valence 1 in $T$. Denote by $e_1$ the third edge at $v_0$ distinct from $E,E'$. Since $E \in E(T)$, we have $E', e_1 \notin E(T)$. 
Lemma \ref{OrientEdges} now follows from:

\begin{lem}\label{OrientEdges2}
Let $G$ be a trivalent graph with no separating edges and let $(T,v_0)$ be a good spanning tree in $G$. Let $e_1 \in E(G-T)$ so that $i(e_1)=v_0$.
Then one can enumerate $E(G-T)-\{e_1\}$ as
$e_2, \dots , e_r$ and orient all of the edges of $G$ so that
\begin{enumerate}
\item there exist positive embedded loops $\al_1, \dots , \al_r \subset G$,
\item for each $i$, we have that $i$ is the smallest index such that $\al_i$ contains $e_i$, and
\item for each $i$, we have that $\al_i \cap ( \cup_{j=1}^{i-1} \al_j )$ is a connected arc containing $e_1$.
\end{enumerate}
\end{lem}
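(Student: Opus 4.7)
The plan is to build the loops $\al_1,\dots,\al_r$ and orient the edges of $G$ inductively. I begin by taking $\al_1$ to be the fundamental cycle $e_1 \cup [w_1,v_0]_T$, oriented cyclically so that $e_1$ is traversed from $v_0$ to $w_1$; this assigns orientations to $e_1$ and to every tree edge on $[w_1,v_0]_T$.

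For the inductive step, suppose $\al_1,\dots,\al_{i-1}$ have been built, and set $G_{i-1} := \bigcup_{j<i}\al_j$. Because a trivalent graph with no separating edges is automatically 2-vertex-connected (any cut vertex would force a bridge at the third edge, since trivalence leaves only a 2/1 split of edges at the cut vertex), and since 2-vertex-connectivity is preserved under ear-addition from a cycle, $G_{i-1}$ is also 2-vertex-connected. I then select a non-tree edge $e_i \in E(G-T) \setminus (\{e_1\}\cup\{e_j : j<i\})$ and grow it into an ear $P_i \subset G$ by appending, at each endpoint of $e_i$ not already in $V(G_{i-1})$, the initial portion of its unique tree path toward $v_0$ that lies outside $V(G_{i-1})$, stopping at the first vertex of $V(G_{i-1})$. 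A counting argument (there are $r-1$ ears to build and $r-1$ remaining non-tree edges, and any ear must contain at least one non-tree edge, else its interior would be a tree-path with both endpoints on $G_{i-1}$, contradicting the uniqueness of tree paths in $T$) guarantees that I may arrange so that each ear contains exactly one non-tree edge, which I take to be $e_i$.

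Next, I choose a simple path $Q_i$ in $G_{i-1}$ between the two endpoints of $P_i$ passing through $e_1$; this exists because subdividing $e_1$ preserves 2-vertex-connectivity, and in a 2-vertex-connected graph any two vertices lie on a common simple path through any prescribed third vertex. I set $\al_i := P_i \cup Q_i$ and give it the cyclic orientation under which $e_1$ is traversed from $v_0$ to $w_1$; this orients the new edges of $P_i$, while the edges of $Q_i$ retain their earlier orientations. Viewing $G_{i-1}$ as the edge $e_1$ together with the directed paths $\al_j \setminus \{e_1\}$ from $w_1$ to $v_0$ for $j < i$, I select which endpoint of $P_i$ is traversed first by $\al_i$ so that $Q_i$ concatenates a directed path from one endpoint of $P_i$ to $v_0$, then $e_1$, then a directed path from $w_1$ to the other endpoint of $P_i$, all respecting the pre-assigned orientations.

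The three conditions then follow: (1) each $\al_i$ is a positive embedded loop by construction; (2) since $e_i \in E(P_i)$ and $E(P_i) \cap E(G_{i-1}) = \emptyset$, the index $i$ is the smallest with $e_i \in \al_i$; and (3) $\al_i \cap G_{i-1} = Q_i$ is a simple path through $e_1$, hence a connected arc containing $e_1$. The principal obstacle I anticipate is the verification of orientation consistency in the inductive step, namely that the return path $Q_i$ can always be chosen as a directed simple path compatible with the orientations fixed on $G_{i-1}$. The key structural observation supporting this is that every previous cycle traverses $e_1$ in the same direction, so the shared arcs $\al_i \cap \al_j$ all contain $e_1$ and are traversed consistently near $e_1$, and this compatibility propagates along the entire arc $Q_i$ by the flow-like structure of $G_{i-1}$ as a union of directed $w_1$-to-$v_0$ paths glued to $e_1$.
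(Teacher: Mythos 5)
Your overall strategy (start from the fundamental cycle of $e_1$ and attach one ear per remaining non-tree edge, closing each ear through $e_1$) is the same in spirit as the paper's induction, but two of your steps have genuine gaps. First, the ear $P_i$ is not well-defined as a simple path with two distinct attaching points. Since $T$ is a \emph{good} tree, the endpoints of $e_i$ are comparable, so $v_-(e_i)$ lies on the tree path $[v_+(e_i),v_0]_T$. Hence the two tree segments you append at the endpoints of $e_i$ are nested rather than disjoint: whenever $v_-(e_i)\notin V(G_{i-1})$, the segment grown from $v_+(e_i)$ passes through $v_-(e_i)$ and then coincides with the segment grown from $v_-(e_i)$, so $P_i$ is a cycle with a tail (and both ``endpoints'' of the ear collapse to one vertex of $G_{i-1}$). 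To repair this you must show that when $e_i$ is processed, $v_-(e_i)$ --- in fact the tree edge of $[v_-(e_i),v_+(e_i)]_T$ incident to $v_-(e_i)$ --- already lies in $G_{i-1}$. This is not automatic: it requires choosing the enumeration of $E(G-T)-\{e_1\}$ so that $k<j$ implies $v_-(e_k)\ngeq_T v_-(e_j)$ and then an argument using trivalence and the absence of separating edges (this is exactly the paper's claim that the edge $t_1$ is already oriented, together with its invariant that orientedness propagates downward in $T$). Your counting argument about ears containing one non-tree edge does not address this.

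Second, and more seriously, the existence of the return path $Q_i$ is asserted rather than proved. Undirected $2$-connectivity of $G_{i-1}$ does give a simple path between the two attaching vertices through the midpoint of $e_1$, but condition (1) requires $\al_i$ to be \emph{positive} for the single global orientation already fixed on $G_{i-1}$. So $Q_i$ must decompose as a directed simple path $a\to v_0$, then $e_1$, then a directed simple path $w_1\to b$, with the two directed segments internally disjoint; none of this follows from undirected connectivity, and your ``flow-like structure'' remark is precisely the point that needs proof. Concretely, $a$ lies on some earlier loop $\al_j$ and $b$ on some $\al_k$, and the terminal segment of $\al_j$ from $a$ to $v_0$ may meet the initial segment of $\al_k$ from $w_1$ to $b$ in edges or half-edges, in which case $\al_i$ fails to be embedded. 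Controlling this is the heart of the paper's proof: it maintains the invariant that for aligned tree edges the ``left'' highlighted subpath of one loop and the ``right'' highlighted subpath of another share no half-edges, and the choice of which side of each previous loop to traverse (the $L$ versus $R$ splitting, depending on whether $t_1$ points up or down in $T$) is made exactly so that this invariant persists. Without an argument of this kind your induction step does not go through.
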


We will need the following definitions in our proof of Lemma \ref{OrientEdges2}.
\begin{df}[$v_-(e)$ and $v_+(e)$]
Let $T$ be a good tree in $G$. Given an edge $e \in E(G)$, one of its endpoints is closer (in $T$) to $v_0$ than the other. We denote by $v_-(e)$ the vertex closer to $v_0$  and by $v_+(e)$ the vertex further from $v_0$. 
\end{df}

\begin{df}
Let $\al$ be an embedded oriented path in the graph $G$ and let $x, y$ be two points in the image of $\al$, i.e $x=\al(s)$ and $y=\al(t)$, for some $s,t$. If $s<t$, then we denote by $[x,y]_\al$ the image of the subpath of $\al$ initiating at $x$ and terminating at $y$, i.e. $\al([s,t])$. \end{df}

\begin{df}[Left-right splitting]
Let $v_0$ be a vertex of a graph $G$ and let $\al$ be an embedded directed loop based at $v_0$. Let $e$ be an edge of $\alpha$, and $m$ the midpoint of $e$. Then the \emph{left-right} splitting of $\al$ at $e$ is
\[ L^e_\al = [v_0, m]_{\al}, \quad   R^{e}_{\al} = [m,v_0]_{\al}. \]
\end{df}

\begin{df}[Aligned edges]
Let $G$ be a graph and $(T,v_0)$ a spanning good tree in $G$. Suppose $\b_1, \b_2 \in E(T)$ satisfy that the vertices 
$\{ v_+(\b_1), v_+(\b_2), v_0 \}$ span a line in $T$. 
Then we call $\b_1, \b_2$ \emph{aligned}. 
If $v_+(\b_1) < v_+(\b_2)$ we say $\b_1$ \emph{lies below} $\b_2$.
\end{df}

\begin{df}[Highlighted subpaths]
Let $G$ be a graph and $(T, v_0)$ a spanning good tree in $G$.
Let $\b_1, \b_2$ be aligned edges in $T$ such that $\b_1$ lies below $\b_2$.
For each $i$, let $\al_{f_i}$ be an embedded loop containing $\b_i$ and $v_0$.
We define $H^{\b_1, \b_2}_{\b_1}$ and $H^{\b_1, \b_2}_{\b_2}$, the \emph{highlighted} subpaths of $\al_{\b_1}$ and $\al_{\b_2}$, respectively, as follows: 
\begin{eqnarray}
\label{eqCase1}H^{\b_1, \b_2}_{\b_1} = L^{\b_1}_{\al_{\b_1}} \text{ and} & H^{\b_1, \b_2}_{\b_2} = R^{\b_2}_{\al_{\b_2}} &  \text{when } i(\b_1) <_T ter(\b_1) \\
\label{eqCase2}H^{\b_1, \b_2}_{\b_1} = R^{\b_1}_{\al_{\b_1}} \text{ and} & H^{\b_1, \b_2}_{\b_2} = L^{\b_2}_{\al_{\b_2}} & \text{when } i(\b_1) >_T ter(\b_1)
\end{eqnarray}
\end{df}

\begin{proof}[Proof of Lemma \ref{OrientEdges2}]
Enumerate $E(G-T)-\{e_1\}$ so that $k<j$ implies that $v_-(e_k) \ngeq v_-(e_j)$.
We prove this lemma by induction on $i$ for $1 \leq i \leq r$. We will define a loop $\al_i$ and orient its previously unoriented edges so that items (1), (2), and (3) of the lemma hold and moreover the following items (4) and (5) hold.
Denote by $\al_e$ the first loop that contains $e$ (for example when $e = e_j \notin E(T)$ then $\al_e = \al_j$). 
\begin{enumerate}\setcounter{enumi}{3}
\item If $\b \in E(T)$ and $\b' \in E(G)$ are such that $v_+(\b) \leq v_+(\b')$ and $\b'$ is oriented, then $\b$ is oriented. 
\item\label{RLconsistant} 
Let $\b_1, \b_2 \in E(T)$ be aligned and $H^{\b_1, \b_2}_{\b_1}, H^{\b_1, \b_2}_{\b_2}$ the corresponding highlighted paths with respect to $\al_{\b_1}, \al_{\b_2}$, then 
$H^{\b_1, \b_2}_{\b_1} \cap H^{\b_1, \b_2}_{\b_2}$
contains no half-edges. 
\end{enumerate}

We include (\ref{RLconsistant}) to ensure that the loop in the induction step is embedded.
\smallskip

\textbf{The base case:} We begin the base of the induction with the edge $e_1 \in E(G-T)$, which  is adjacent with $v_0$.
Define the directed circle $\al_{e_1}= e_1*[t(e_1), v_0]_T$, this is clearly an embedded loop.
We orient the edges of $\al_{e_1}$ accordingly, i.e. $e_1$ is directed away from $v_0$,
and the edges $\b \in [t(e_1), v_0]_T$ are directed toward $v_0$.
Clearly items (1)-(4) hold at this stage, i.e. in the subgraph consisting of precisely $\alpha_1$.

\smallskip

\textbf{The induction hypothesis:} We now assume that we have oriented some subset of $G$ so that:
for each $j \leq i-1$ we have that $e_j$ is oriented. We call $G' = \cup_{j=1}^{i-1} \al_j$ the oriented subgraph. We let $e :=e_i$ and note that $e \notin G'$.

\textbf{The induction step:} Let  $I = [v_-(e),v_+(e)]_T$.
 Let $t_1$ be the edge of $I$ adjacent at $v_-(e)$. 
We claim as follows that $t_1$ is oriented, 
see Figure \ref{partlyOriented}. Indeed, if $v_-(t_1)=v_0$ this follows from the base case. Otherwise, there is an edge $t_3 \in E(T)$ adjacent to $v_-(t_1)$. 
Since $t_3$ is nonseparating, there is an edge $e' \in E(G-T)$ so that $v_-(e') \leq v_-(t_3)$ and $v_+(e') \geq v_+(t_3)$. But since the graph is trivalent, $v_+(e') \geq v_+(t_1)$. 
Now, since $v_-(e') < v_-(e)$, we have that $e'$ is oriented. 
Hence, by item (4) in the induction hypothesis, $t_1$ is oriented. 

\begin{figure}[h]
\begin{center}
\includegraphics[width=4in]{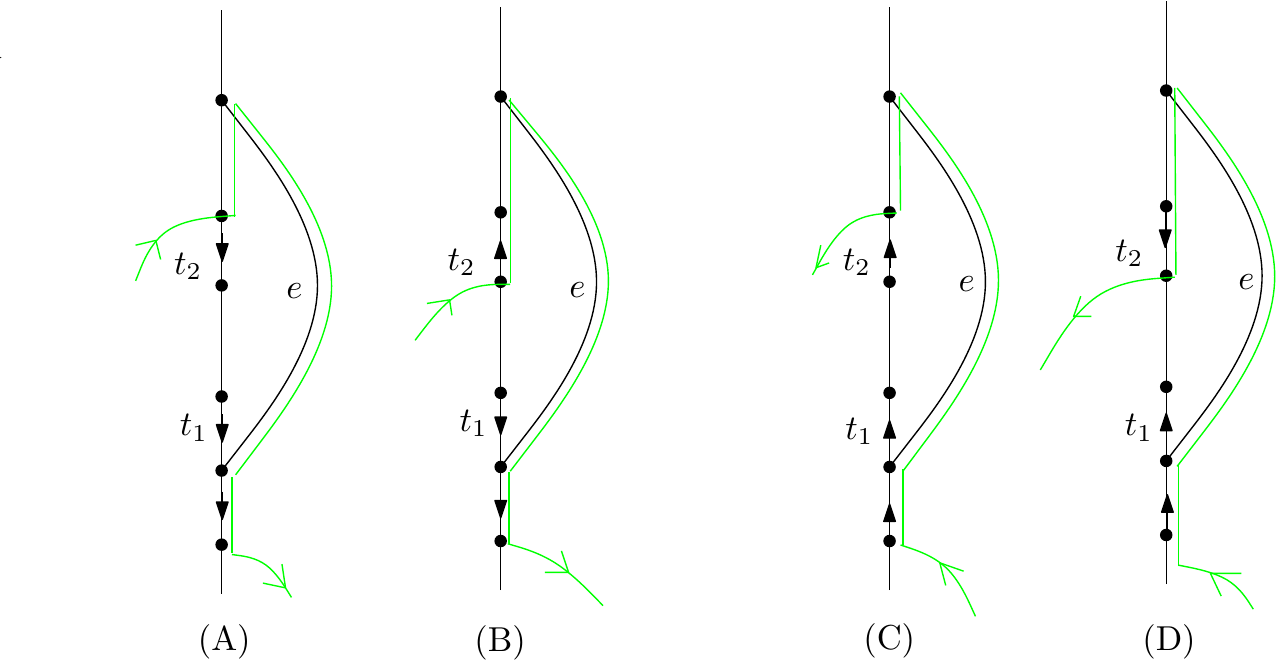}
\caption{The green line indicates the path $\al_{e}$ takes near $e$. The edge $t_1$ is the first edge in $I$ and $t_2$ is the last oriented edge in $I$.\label{partlyOriented}}
\end{center}
\end{figure}

Let  $t_2$ be the last oriented edge of $I$. Denote $J = [v_+(t_2), v^+(e)]_T$. Note that we allow $t_1=t_2$ and leave the adjustments of this case, from cases A and C of Figure \ref{partlyOriented}, to the reader. The loop $\al_e$ is constructed as in Figure \ref{partlyOriented} from the following segments by adding or removing a half-edge of $t_2$ or removing a half-edge of $t_1$ or $t_2$
\begin{equation}
H^{t_1, t_2}_{t_2}, ~ J, ~ e, ~ H^{t_2, t_1}_{t_1}. 
\end{equation} 
The orientation of $\al_e$ is chosen according to the direction of $t_1$. When $i(t_1)>_T t(t_1)$, cases (A) and (B) of Figure \ref{partlyOriented}, we orient $\al_e$ so that $H^{t_1, t_2}_{t_2}$ is the left (first) segment and when $i(t_1)<_T t(t_1)$, cases (C) and (D), we orient so that $H^{t_2, t_1}_{t_1}$ is the first segment. 

We observe that $\al_e$ is indeed embedded as follows. The paths $H^{t_1, t_2}_{t_2}, H^{t_2, t_1}_{t_1} \subset G'$ while $J, e \subset G-G'$, hence they are edge-disjoint. Moreover, by item (\ref{RLconsistant}) for $G'$, the paths $H^{t_1, t_2}_{t_2}, H^{t_2, t_1}_{t_1}$ are half-edge disjoint so that even if we add $t_1$ or $t_2$ they remain edge disjoint. Therefore the path $\al_e$ does not self-intersect in an edge. It cannot self-intersect at a vertex since the graph is trivalent. This proves item (1). 

Note that $\al_e$ is contained in $G' \cup T \cup \{e\}$, hence it does not contain $e_j$ for $j>i$. This proves item (2). Item (3) is also clear. Item (4) is satisfied since for $\b' = e$ and for all $\b \in E(T)$ so that $v_+(\b) \leq v_+(e)$ we have that $\b$ is oriented. 

We are left with proving item (\ref{RLconsistant}) 
for each pair of aligned edges $\b_1, \b_2 \in E(T)$ such that at least one of them is oriented in the $i^{th}$ step, i.e. $\al_\b = \al_i$.
It is less difficult to check that the claim holds when both edges are newly oriented.
We leave this case to the reader and check the case where $\b_1 \subset G' \cap T$ and $f_2 \subset T$ is newly oriented, i.e. $f_2 \subset J$.  
 We illustrate the different cases in Figure \ref{caseA}.

\begin{figure}[h]
\begin{center}
\includegraphics[width=3.5in]{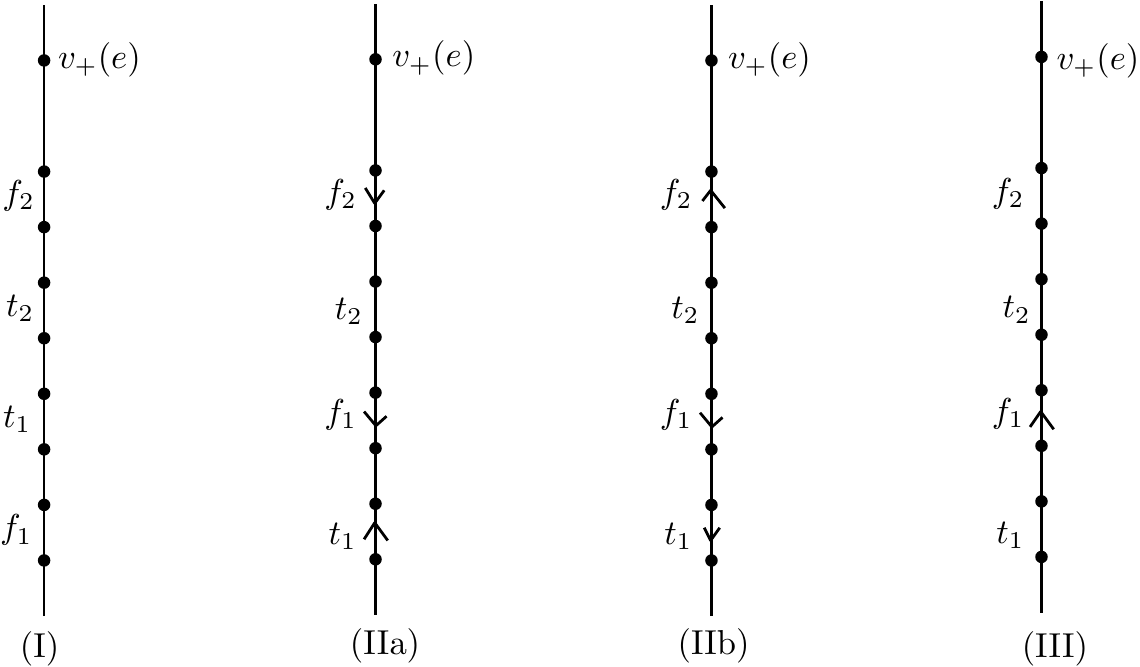}
\caption{Checking Claim \ref{RLconsistant} for $\b_1, \b_2$ depending on their location. \label{caseA} 
In Case I, $\b_1$ lies below $t_1$, and in the other cases $t_1$ lies below $\b_1$. In Cases IIa, IIb, $\b_1$ is pointing down and in Case III, $\b_1$ is pointing up.}
\end{center}
\end{figure}

Suppose $\b_1 \notin I$ (Case I in Figure \ref{caseA}), then for each $\b \in \{\b_2, t_1, t_2\}$, we have $\b_1 <_T \b$. Thus, 
$H_{\b_1}^{\b_1,\b} = H_{\b_1}^{\b_1,\b'} \text{ for } \b,\b' \in \{\b_2, t_1, t_2\}$.

Moreover, by our construction of $\al_{\b_2} = \al_e$, we have $H^{\b_1, \b_2}_{\b_2} \subset H^{\b_1, t_k}_{t_k} \cup J \cup \{t_2\}$ for either $k=1$ or $k=2$. Thus, for the same $k$ we have (see Figure \ref{partlyOriented}) :
\[ 
H_{\b_1}^{\b_1,\b_2} = H_{\b_1}^{\b_1,t_k}\quad \quad  \text{ and } \quad  \quad H^{\b_1, \b_2}_{\b_2} \subset  H^{\b_1, t_k}_{t_k} \cup \{e\} \cup J  \cup \{t_2\}.
\]
Since $J$ is newly oriented, $H_{\b_1}^{\b_1,t_k}$ and $J$ are edge-disjoint. 
By the induction hypothesis, $H_{\b_1}^{\b_1,t_k} \cap  H^{\b_1, t_k}_{t_k}$ contains no edges or half-edges. 
Moreover, if $H^{\b_1, \b_2}_{\b_2} \not\subset H^{\b_1, t_k}_{t_k}\cup J \cup \{e\}$ then $k=2$ and a half of $t_2$ must be in $H^{\b_1, t_2}_{t_2}$. 
Since $H_{\b_1}^{\b_1,t_k} \cap  H^{\b_1, t_k}_{t_k}$ contains no half-edges, then $t_2$ is not contained in $H_{\b_1}^{\b_1,t_k}$. This implies that $H_{\b_1}^{\b_1,\b_2} \cap  H^{\b_1, \b_2}_{\b_2}$.

If $\b_1 \in I$, then there are two classes of cases: $i(\b_1) \geq_T ter(\b_1)$ (Cases IIa, IIb of Figure \ref{caseA}) and $i(\b_1) \leq_T ter(\b_1)$ (Case III).
We will prove Case IIa and leave the others to the reader.
If $\b_1$ is pointing down, then $H^{\b_1, \b_2}_{\b_1} = R^{\b_1}_{\al_{\b_1}}$,  $H^{\b_1, \b_2}_{\b_2} = L^{\b_2}_{\al_{\b_2}}$. 
There are two subcases: $\b_2$ is
pointing down (Case IIa) or up (Case IIb). 
If $\b_2$ is pointing down, then $t_1$ is pointing up (see Figure \ref{partlyOriented}), thus
$L^{\b_2}_{\alpha_e} \subset L^{t_1}_{\alpha_{t_1}} \cup e \cup J$.
In this configuration,
\[ \begin{array}{lll}
H_{\b_1}^{\b_1,t_1} = R^{\b_1}_{\alpha_{\b_1}} & & H_{t_1}^{\b_1,t_1} = L^{t_1}_{\alpha_{t_1}} \\[0.2 cm]
H_{\b_1}^{\b_1,\b_2} = R^{\b_1}_{\alpha_{\b_1}} & & H_{\b_2}^{\b_1,\b_2} = L^{\b_2}_{\alpha_{e}} \subset L^{t_1}_{\alpha_{t_1}}  \cup \{e\} \cup J
\end{array}\]
Hence, the fact that $ H_{\b_1}^{\b_1, t_1} \cap H_{t_1}^{\b_1, t_1}$ contains no half-edges implies that $ H_{\b_1}^{\b_1, \b_2} \cap H_{\b_2}^{\b_1, \b_2}$ contains no half-edges.
The other cases are similar.
\end{proof}

\subsection{Rose-to-graph fold line}\label{roseToGraphSec}
Given a point $x$ whose underlying graph is trivalent with no separating edges, we wish to find a rose-point $x_0$ and a line in $\os$ from $x_0$ to $x$. This is done by simultaneous folding as defined below. 

\begin{df}[Rose-to-graph fold line $\calF(x_0, \{s_{ij}\})$]
Let $x_0 = (R, \mu, \ell_0)$ be a point in $\uos$ whose underlying graph is a rose with $r$ petals. There are $K = r(r-1)$ turns  and we enumerate them in any way $\{\tau_i\}_{i=1}^K$. Let $\overrightarrow{s} \in \RR^{K}$ be a nonnegative vector so that  $s_i$ is no greater than the length of each edge in the turn $\tau_i$. 
Given the data $(x_0, \ov{s})$ we construct a continuous family of graphs $\{x_t\}$ for $0 \leq t \leq T = \sum s_i$, and maps $f_{t,0} \from x_0 \to x_t$ as follows. In the $i^{th}$ step let $\tau_i = \{e_j, e_m\}$ and fold initial segments of length $s_i$ in $f_{t,0}(e_j)$ and $f_{t,0}(e_m)$. 
We caution that these $f_{t,0}$ are not always homotopy equivalences. However, if $f_{T,0}$ is a homotopy equivalence, then for each $t<T$ the map $f_{t,0}$ is a homotopy equivalence. In this case we get a path 
\[ \begin{array}{rl}
\calF(x_0,\ov{s}) \from [0,T] &\to \uos \\ [0.2 cm]
\calF(x_0, \ov{s})(t) &= x_t.
\end{array} \] 
We denote its projectivization by $\ol{\calF} = q(\calF)$.
\end{df}

\begin{lem}\label{RtoRfoldlineLemma}
Let $G$ be a trivalent graph such that $\pi_1(G) \cong F_r$. Then for each $x \in \uos$ with underlying graph $G$, there exists a point $x_0$ whose underlying graph is a rose and there exists a nonnegative vector  $\ov{s}$ in $\RR^{K}$, where $K=r(r-1)$, so that
\[ x = \calF(x_0, \ov{s})(T). \]
Additionally, $x_0, \ov{s}$ are linear functions of the lengths of $E(G)$ as they vary throughout the unprojectivized simplex. (As above, $T = \sum s_{i}$.)
\end{lem}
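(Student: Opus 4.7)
The plan is to invoke Lemma~\ref{OrientEdges} to decompose $G$ into positively oriented embedded loops and then translate that decomposition into a rose together with a prescription of simultaneous-fold amounts.

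\textbf{Step 1.} Pick any vertex $v_0\in V(G)$ and any turn $\{E,E'\}$ at $v_0$, and apply Lemma~\ref{OrientEdges}. This produces an orientation on $G$ and positively oriented embedded loops $\al_1,\dots,\al_r$ based at $v_0$ with $\al_i\cap\bigcup_{j<i}\al_j$ a connected arc containing $v_0$ for every $i$.

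\textbf{Step 2.} Take $x_0$ to be the $r$-petaled rose whose $i$-th petal $p_i$ has length equal to $\sum_{e\subset\al_i}\ell(e)$. Parametrize each $p_i$ from $v_0$ in the direction in which $\al_i$ departs from $v_0$, and equip $x_0$ with the marking in which $p_i$ is the loop whose image in $G$ will be $\al_i$. This already forces $\ell_0(p_i)$ to be a linear functional of the edge-length vector of $G$.

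\textbf{Step 3.} We now specify $\vec s$. Process the loops in the order $i=2,3,\dots,r$. For each $i$, the overlap arc $\al_i\cap\bigcup_{j<i}\al_j$ begins at $v_0$ and splits at its internal vertices (all trivalent, so at each internal vertex exactly one earlier loop joins or leaves the arc). Traversing the arc outward from $v_0$, decompose it into maximal consecutive sub-arcs $\beta_1,\beta_2,\dots$ where $\beta_\ell$ is characterized by having all of its edges contained in $\al_{k_\ell}$ for a single fixed earliest index $k_\ell<i$. For each $\beta_\ell$, assign to the turn in $x_0$ between the appropriate oriented half-edges of $p_i$ and $p_{k_\ell}$ (those whose initial directions at $v_0$ correspond, after folds $1,\dots,i-1$, to the currently-shared initial portion of $p_i$ and $p_{k_\ell}$) an $s$-value equal to $\sum_{e\subset\beta_\ell}\ell(e)$. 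All remaining turns receive $s_i=0$. Choosing any enumeration of the turns that respects this processing order produces the required $\vec s\in\RR_{\ge 0}^{K}$.

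\textbf{Step 4 (induction and linearity).} By induction on $i$: after the folds coming from loops $1,\dots,i$, the current graph equals $\bigcup_{j\le i}\al_j\subset G$ with its $G$-metric, the image of $p_j$ is exactly $\al_j$ for $j\le i$, and $f_{t,0}$ remains a homotopy equivalence because the partial subgraph has free fundamental group of rank $i$ generated by the images of $p_1,\dots,p_i$. At $i=r$ this gives $x_T=x$. Each $s_i$ is a sum of lengths of edges of $G$ lying in a sub-arc $\beta_\ell$, so it is a linear functional of the edge-length vector of $G$ (valid on the open simplex on which the combinatorial sub-arc decomposition of Step~3 does not change).

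\textbf{Main obstacle.} The delicate point is Step~3: one must verify that (i) at each stage the turn chosen in the rose is well-defined (the two ``sides'' on which one folds in the rose genuinely correspond to the current initial segments at $v_0$ of $p_i$ and $p_{k_\ell}$, which requires that the connectedness of $\al_i\cap\bigcup_{j<i}\al_j$ keeps the still-to-be-folded part of $p_i$ attached at $v_0$ throughout the $i$-th batch), and (ii) the length constraint $s_i\le\min\{\ell(e_j),\ell(e_m)\}$ in $\calF(x_0,\vec s)$ is respected, which follows because $\sum_\ell\ell(\beta_\ell)\le\min(\mathrm{length}(\al_i),\mathrm{length}(\al_{k_\ell}))$ as each $\beta_\ell$ sits inside both loops. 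Once these two points are in place, the factorization $R_r\xrightarrow{\mu}x_0\xrightarrow{f_{T,0}}x$ is a homotopy equivalence by construction, completing the proof.
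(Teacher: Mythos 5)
Your overall strategy matches the paper's: invoke Lemma~\ref{OrientEdges} to get the loop decomposition $G=\cup_{i=1}^r\al_i$, take $x_0$ to be the rose with petal lengths $\ell_G(\al_i)$, read off $\ov{s}$ from the overlap arcs, and observe that each quantity depends linearly on the edge lengths. Steps~1, 2 and the linearity claim are essentially identical to the paper's proof. There are two genuine differences, one a virtue and one a bug.

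\textbf{A point in your favor.} You derive $\ov{s}$ from the cumulative overlap arcs $\al_i\cap\bigcup_{j<i}\al_j$, which Lemma~\ref{OrientEdges}(2) guarantees are connected. The paper instead asserts that each pairwise intersection $\al_i\cap\al_j$ is a connected arc containing $v_0$ and sets $s$-values from that; but Lemma~\ref{OrientEdges} only gives connectedness of the cumulative intersections, and pairwise connectedness can in fact fail (e.g.\ for $r=3$ one can realize $\al_1=e_1T_3T_2T_1$, $\al_2=e_2T_3T_2T_1$, $\al_3=e_1T_3e_3T_1$, where $\al_2\cap\al_3=\{T_3,T_1\}$ is disconnected). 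So your instinct to work only with the cumulative arcs is sound.

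\textbf{The gap.} Your $s$-value in Step~3 is wrong. You assign to the turn between $p_i$ and $p_{k_\ell}$ the value $\sum_{e\subset\beta_\ell}\ell(e)$, i.e.\ the length of the $\ell$-th sub-arc alone. But in Definition~\ref{BasicFoldPath}/the definition of $\calF(x_0,\ov{s})$, the fold at a turn $\tau$ identifies the \emph{initial} segments of length $s_\tau$, measured from $v_0$. The sub-arc $\beta_\ell$ sits at distance $\sum_{m<\ell}\ell(\beta_m)$ from $v_0$, so the fold with $s$-value $\ell(\beta_\ell)$ identifies the wrong pieces (or does nothing if $\ell(\beta_\ell)\le\sum_{m<\ell}\ell(\beta_m)$). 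Concretely, take the positive side of $\al_3\cap(\al_1\cup\al_2)$ to be $e_ae_be_c$ with $e_a,e_b\in\al_1\cap\al_2\cap\al_3$ and $e_c\in\al_2\cap\al_3\setminus\al_1$. Your scheme gives $s=\ell(e_c)$ for the turn $\{p_3,p_2\}$, but after the folds $\{p_1,p_2\}$ and $\{p_3,p_1\}$ with $s=\ell(e_ae_b)$ the petals $p_3,p_2$ already share $[0,\ell(e_ae_b)]$, and folding their initial $\ell(e_c)$-segments does not identify the portion corresponding to $e_c$. The correct value is the cumulative length $\sum_{m\le\ell}\ell(\beta_m)$, i.e.\ the distance from $v_0$ to the far endpoint of $\beta_\ell$; with that replacement your argument goes through, the inductive claim in Step~4 becomes valid, and (being again a sum of edge lengths) it remains a linear functional.
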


\begin{proof}
Let $E_1, \dots E_{3r-3}$ be the edges in $G$. Lemma \ref{OrientEdges} provides a decomposition of $G$ as
$G = \cup_{i=1}^r \al_i$.
Let $R = \sqcup_{i=1}^r \al_i/\{v_0\}$, then $R$ is an $r$-petaled rose. Let $e_1, \dots e_r$
be the edges of $R$. Let $\ell_R(e_i) = \ell_G(\al_i)$. We write $\al_i$ as a sequence of edges
$\al_i = E_{m(i,1)} \cdots E_{m(i,k_i)}$.
Then
\begin{equation}\label{eqLengths}
\ell_R(e_i) = \ell_G(\al_i) = \sum_{j=1}^{k_i} \ell_G(E_{m(i,j)}).
\end{equation}
Let $x_0$ denote $R$ with these edge lengths. 
There is a natural map $\theta \from x_0 \to x$ defined by the inclusion of $\al_i$ in $G$.
This is a quotient map and, moreover, $\theta|_{e_i}$ is an isometry.

Recall that the intersection of $\al_i$ and $\al_j$ is an arc containing $v_0$. Let $a,b$ be the endpoints of the arc. For $\tau_k = (e_i, e_j)$ and $\tau_m = (\overline{e_i}, \overline{e_j})$ we define 
\begin{equation}\label{eqLengths2}
s_{k} = l_G([v_0,a]_{\al_i}) \quad \textup{and} \quad s_{m} = l_G([b,v_0]_{\al_i}) . 
\end{equation}
Consider the folding line $\calF(x_0, \ov{s})$. By the definitions, $\theta$ is precisely the map $f_{T,0} \from x_0 \to x_T$, since the points identified by $\theta$ are precisely those that are identified in the folds. Therefore, $x_T$ equals the point $x$ that we started with.

Moreover, $\theta$ is a homotopy equivalence by Lemma \ref{OrientEdges2}(2). Therefore, $\calF(x_0, \ov{s})$ is a path in unprojectivized Outer Space.  Equations \ref{eqLengths}, \ref{eqLengths2} show that the dependence of $l_i$ and $s_{k}$ on edge lengths in $x$ is linear.
\end{proof}

\begin{lem}\label{ContOfRtRfoldingLingLemma}
Suppose that $x = \calF(x_0, \ov{s})(T)$. Then there exists a neighborhood of $(x_0, \ov{s})$ so that for each $(y_0, \ov{u})$ in this neighborhood, the endpoint $y:=\calF(y_0, \ov{u})(T')$ of the fold line $\calF(y_0, \ov{u})$ lies in the same unprojectivized open simplex in $\uos$ as $x$.

Additionally, the edge lengths of $y$ are linear combinations of the edge lengths of $y_0$ and $\ov{u}$.
\end{lem}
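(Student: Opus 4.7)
The plan is to combine a step-by-step linearity analysis of the fold operations with an openness argument. First, I would observe that, once the combinatorial schedule of identifications is fixed (i.e., which edges at each step get subdivided, which get fully folded, which vertices collide), each fold step acts by an affine linear transformation on the edge-length vector of the intermediate graph. Explicitly, at step $i$ the operation $\ell_{i-1} \mapsto \ell_i$ amounts to subtracting $s_i$ from the current lengths of the edges participating in $\tau_i$ and inserting new edges of length $s_i$, which is affine linear in $(\ell_{i-1}, s_i)$. Composing the $K = r(r-1)$ such maps produces an affine linear function $(\ell_0, \ov{s}) \mapsto \ell_T$, which, evaluated at $(y_0, \ov{u})$ in place of $(x_0, \ov{s})$, yields the linearity assertion at the end of the lemma.

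Next I would establish stability of the combinatorial schedule under perturbation. Since $x$ lies in an open unprojectivized simplex of $\uos$, each of its edges has strictly positive length, giving finitely many strict linear inequalities on $(\ell_0, \ov{s})$. Moreover, at each intermediate step $i$ of $\calF(x_0, \ov{s})$ the fold parameter $s_i$ must be strictly less than the current length of each edge in the turn $\tau_i$: if equality held, a full fold would occur, forcing either a zero-length edge in the final graph or a coalescence of vertices that places $x$ on a lower-dimensional face, contradicting the hypothesis. Each such fold-allowability condition is again a strict linear inequality in $(\ell_0, \ov{s})$, so the finite collection of all such inequalities cuts out an open neighborhood $U$ of $(x_0, \ov{s})$ on which they all continue to hold.

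For $(y_0, \ov{u}) \in U$, the same combinatorial fold schedule is well-defined, every intermediate fold is allowable, and all intermediate graphs remain in their respective open simplices. Consequently, $y = \calF(y_0, \ov{u})(T')$ with $T' = \sum u_i$ lies in the same open unprojectivized simplex of $\uos$ as $x$, and its edge lengths are obtained by evaluating the affine linear formulas from the first step at $(y_0, \ov{u})$. The main obstacle I anticipate is the combinatorial verification that all intermediate fold-allowability inequalities at $(x_0, \ov{s})$ are indeed strict — that is, ruling out hidden full folds that occur mid-sequence yet still leave $x$ in an open simplex. This would be handled by an induction on the step index $i$: once a full fold happens at step $i$, the induced merger of vertices or identification of edges propagates through all subsequent (linear) steps, ultimately forcing either a collapsed edge or a coalesced vertex in $x$, in contradiction with the top-dimensionality of its simplex.
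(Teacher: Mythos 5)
Your proof is correct in outline and rests on the same two pillars as the paper's argument: once the combinatorial pattern of identifications is fixed, the terminal edge lengths are linear in $(\ell(x_0), \ov{s})$, and the conditions guaranteeing that the pattern persists are finitely many strict inequalities, hence cut out an open set. The implementations differ, though. You run an induction through the fold schedule and therefore must certify at each intermediate step that no unplanned full fold occurs; you correctly identify this as the delicate point but leave it as a sketch. The paper never examines the intermediate graphs: it uses the loop decomposition of Lemma \ref{OrientEdges} --- in particular item (3), that pairwise intersections of the loops are arcs through $v_0$ --- to write each terminal edge length in closed form (Equation \ref{eqOppLengths}) as a signed combination of at most two fold parameters and one loop length, and then takes the neighborhood to be the locus where all of these expressions are positive. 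The closed-form route buys explicit linearity and dispenses with your induction, at the price of leaning on the structure of the decomposition. One caveat affects both arguments equally: some coordinates of $\ov{s}$ are necessarily zero (the $2r$ directions at the rose vertex fall into only three gates at the trivalent vertex $v_0$), and perturbing such a coordinate to a positive value inserts a new fold and a new edge, taking the endpoint out of the open simplex of $x$; the neighborhood must therefore be understood inside the parameter locus actually produced by Lemma \ref{RtoRfoldlineLemma}, which is how the lemma is applied later in the paper.
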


\begin{proof}
Consider the positive edges $E_1, \dots E_m$ in $G$ and let $G = \cup_{i=1}^r \al_i$ be the decomposition guaranteed by Lemma \ref{OrientEdges}. The edge $E_i$ is contained in a loop $\al_{j(i)}$. Since 
$G$ is a trivalent graph, at each endpoint $\{v,w\}$ of $E_i$ there is an edge, $E_k, E_d$ resp., not 
contained in $\al_i$. The edges $E_k, E_d$ are contained in $\al_{j(k)},\al_{j(d)}$ resp. 
Now $v_0, v \in \al_{j(i)} \cap \al_{j(k)}$. Thus, by Lemma \ref{OrientEdges}(3), either
$[v_0, v]_{\al_{j(i)} } \subseteq \al_{j(i)}  \cap \al_{j(k)}$ or
$[v, v_0]_{\al_{j(i)} } \subseteq \al_{j(i)}  \cap \al_{j(k)}$. This situation is similar for $v_0,w$. Therefore,
\begin{equation}\label{eqOppLengths}
\ell_G(E_i) = 
\begin{cases}
s_{m}- s_{n}  & \text{when }\tau_m = (e_{j(d)}, e_{j(i)}) \text{ and } \tau_n=(e_{j(k)}, e_{j(i)}) \\[0.1 cm]
s_n - s_m & \text{when } \tau_m = (\overline{e_{j(d)}}, \overline{e_{j(i)}}) \text{ and } \tau_n=(\overline{e_{j(k)}}, \overline{e_{j(i)}}) \\[0.1 cm]
 |\ell(\al_{j(i)})-(s_n +s_m)|  & \text{otherwise }\end{cases}
\end{equation}
Note this dependence of the $\ell_G(E_i)$ on the variables $s_{m}$ will be the same for all points in the same unprojectivized simplex since they only depend on the loop decomposition. We also get that the dependence of $\ell_G(E_i)$ on $s_{ij}$ and $\ell(e_i)$ is linear.
Let $U$ be the open subset of the unprojectivized rose simplex cross $\RR_+^{r(r-1)}$ so that each expression in the right-hand side of (\ref{eqOppLengths}) is positive for each $i$. 
This is an open set containing $(x_0, \ov{s})$. 
For any point $y$ in the unprojectivized simplex of $x$ one can use (\ref{eqLengths}) and (\ref{eqLengths2}) 
to get $y_0$ and $\ov{u}$ so that 
$y = \calF(y_0, \ov{u})$. Thus $(y_0, \ov{u})$ solves the equations in (\ref{eqOppLengths}) for the edge lengths of $y$, hence $(y_0, \ov{u}) \in U$. 
Thus, there exists a solution of (\ref{eqOppLengths}) in $U$ for each choice of edge lengths of $G$. We see that the dependence of the edge lengths $l_G(E_i)$ on $\ov{s}$ and $\ell(e_i)$ is linear. 
\end{proof}

\subsection{Folding a transitive graph to a rose}

\begin{df}
A \emph{transitive} graph $G$ is a directed graph $G$ with the following property: for any two vertices $w,w'$ there exists a directed path from $w$ to $w'$.

Note that it is enough to check that for any choice of preferred vertex $v$, there exists a directed path to and from each other vertex $v'$.
\end{df}

The proof of the following is left to the reader.
 
\begin{obs}\label{ObsContInd}
Let $G$ be a directed graph and let $f \from G \to G'$ be a direction matching fold of two oriented edges $e_1,e_2$ in $G$, starting at a common vertex $v$. Then:
\begin{enumerate}
\item If $G$ is transitive, then $G'$ is transitive.
\item If the lengths of the edges of $G$ are rationally independent, then the lengths of the edges of $G'$ are rationally independent.
\end{enumerate}
\end{obs}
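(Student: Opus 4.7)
My plan is to verify both items directly from the definitions, leveraging the structural information recorded in Observation \ref{directionMatchingIndOrientation}: a direction-matching fold $f \from G \to G'$ induces an orientation on $G'$ under which every positive edge of $G$ maps to a positive edge-path in $G'$ of simplicial length at most two.

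For part (1), given any two vertices $w, w' \in V(G')$, I will lift them to vertices $\tilde w, \tilde w' \in V(G)$ with $f(\tilde w) = w$ and $f(\tilde w') = w'$. When $f$ is a partial fold, a vertex of $G'$ may be a new interior vertex on the identified image of $e_1$ and $e_2$; in that case I will travel along a short positive sub-arc of that image to reach a vertex coming from $V(G)$, or equivalently invoke the ``preferred vertex'' remark just after the definition of transitive to reduce to a distinguished vertex lying in $f(V(G))$. By transitivity of $G$ there is a directed path $\gamma$ from $\tilde w$ to $\tilde w'$ in $G$, and by Observation \ref{directionMatchingIndOrientation} its image $f(\gamma)$ is a concatenation of positive (respectively negative) edge-paths, hence itself a directed path from $w$ to $w'$ in $G'$. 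This gives transitivity of $G'$.

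For part (2), I will observe that the edge lengths of $G'$ are obtained from those of $G$ by a $\ZZ$-linear procedure: all edges outside $\{e_1,e_2\}$ keep their lengths, and the two folded edges are replaced by edges whose lengths lie in the $\ZZ$-span of $\{\ell(e_1), \ell(e_2)\}$ (concretely, $\ell(e_j)$ and $\ell(e_k) - \ell(e_j)$ in the proper-full case, with analogous combinations in the other fold types). Consequently the $\ZZ$-span of the new edge lengths sits inside the $\ZZ$-span of the old lengths, so any nontrivial rational linear relation among the new lengths would pull back to a nontrivial rational linear relation among the old, contradicting rational independence. The only real subtlety lies in handling the newly-created interior vertices in a partial fold for part (1), which is dispatched as indicated above; otherwise the argument is a direct unwinding of definitions and I expect no serious obstacle.
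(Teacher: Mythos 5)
The paper gives no proof of this observation (it is explicitly ``left to the reader''), so there is no argument of the authors to compare against; your write-up is essentially the intended one. Part (1) is fine: by Observation \ref{directionMatchingIndOrientation} positive (resp.\ negative) edges map to positive (resp.\ negative) edge-paths, so directed paths push forward to directed paths, and the single new vertex created by a partial fold is entered along the folded initial segment and exited along the remainders of $e_1,e_2$, exactly as you indicate.

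In part (2), however, the inference you draw from span containment is not valid as written: knowing that the $\ZZ$-span of the new lengths sits inside the $\ZZ$-span of the old lengths does not prevent a nontrivial rational relation among the new lengths from pulling back to the \emph{trivial} relation among the old ones (think of the degenerate situation in which two of the new lengths coincide). What you actually need is that the old lengths are recoverable from the new ones, i.e.\ that the substitution $(\ell(e_1),\ell(e_2)) \mapsto (\ell(e_2),\, \ell(e_1)-\ell(e_2))$ is invertible over $\QQ$ --- it is given by the unimodular matrix $T_{12}$ of Definition \ref{dfFoldingMatrices} --- so that a relation with coefficient vector $q \neq 0$ on the new lengths becomes the relation with coefficient vector $q^{T}T_{12} \neq 0$ on the old ones, contradicting independence. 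You should also be more careful than ``analogous combinations in the other fold types'': a full fold forces $\ell(e_1)=\ell(e_2)$ and is therefore already excluded by the hypothesis of (2), but for a genuinely partial fold the identified length $s$ is extra data not determined by $\ell(e_1),\ell(e_2)$, and if $s$ is rationally related to the edge lengths (say $s=\ell(e_1)/2$) the conclusion of (2) is simply false. So statement (2) should be proved --- and is only needed --- for the proper full folds that actually occur in the paper, where rational independence guarantees that every allowable fold is proper full; with that restriction and the invertibility remark above, your argument is complete.
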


\begin{lemma}\label{graphToRoseFolding}
Let $G$ be any transitive graph with rationally independent edge lengths and let $\{E, E'\}$ be either a positive or negative turn. Then there exists a fold sequence $f_1, \dots, f_k$ containing only direction matching folds and satisfying that $f_k \circ \dots \circ f_1(G)$ is a rose. Further, assuming $G$ is trivalent, we may choose $f_1$ so that it folds the turn $\{E,E'\}$. 
\end{lemma}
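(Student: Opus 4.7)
The plan is to prove the lemma by induction on the number of vertices $|V(G)|$, with base case $|V(G)| = 1$ being trivial (take $k = 0$, since $G$ is already a rose). For the inductive step I first specify $f_1$ in the trivalent case, and then repeatedly apply a sub-lemma that accomplishes a single vertex-count reduction in at most two direction-matching folds.

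To handle the trivalent constraint, observe that when $G$ is trivalent, transitivity forces the positive/negative valence pattern $(\textup{in}^+(v), \textup{out}^+(v)) \in \{(1,2),(2,1)\}$ at every vertex, so any positive or negative turn $\{E,E'\}$ is automatically direction-matching. Let $v_0$ be the common vertex. If $ter(E) \neq ter(E')$, I set $f_1$ to be the combinatorial full fold of $\{E, E'\}$; this identifies the two terminals and decreases $|V|$ by one. If $ter(E) = ter(E')$ (so $E$ and $E'$ are parallel), I use rational independence to pick the strictly shorter edge and let $f_1$ be the proper full fold of the shorter onto the longer; this preserves vertex count but produces a new loop at the common terminal. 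In either case Observation \ref{ObsContInd} guarantees that the resulting graph is still transitive with rationally independent edge lengths, so we are reduced to the hypothesis of the sub-lemma below.

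Sub-lemma: any transitive graph $H$ with $|V(H)| \geq 2$ and rationally independent edge lengths admits a direction-matching fold sequence of length at most two whose composition strictly reduces the vertex count. If some direction-matching turn $\{e, e'\}$ in $H$ satisfies $ter(e) \neq ter(e')$, a single combinatorial full fold suffices. Otherwise every direction-matching turn of $H$ is a parallel pair of non-loop edges; indeed, a loop at any vertex would pair with any non-loop edge at that vertex (one exists by transitivity together with $|V(H)| \geq 2$) to form a non-parallel direction-matching turn, contradicting our assumption. Pick such a parallel turn $\{e_1, e_2\}$ from $u$ to $w$ and proper-full-fold the shorter onto the longer, producing a loop $b$ at $w$ in the new graph $H'$. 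Observation \ref{ObsContInd} preserves transitivity in $H'$, so some edge $e_3$ incident at $w$ has its other endpoint distinct from $w$. Since the loop $b$ contributes to both the incoming and outgoing positive gates at $w$, the pair $\{b, e_3\}$ is a direction-matching turn at $w$ (in whichever of the two directions matches $e_3$), and its combinatorial full fold identifies $w$ with the other endpoint of $e_3$, reducing $|V|$.

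Iterating the sub-lemma starting from $G$ (or from the image of $f_1$ in the trivalent case) reduces $|V|$ one step at a time; after finitely many steps we arrive at a single-vertex graph, which is necessarily a rose, and concatenating the individual fold sequences yields the desired $f_1, \dots, f_k$. The main obstacle is the second case of the sub-lemma, exemplified by the theta graph: the parallel structure prevents any immediate vertex identification, and a vertex-count-preserving proper full fold must first be inserted to create a loop, which then enables a direction-matching full fold to merge two vertices. Verifying the existence of the auxiliary edge $e_3$ at $w$ after this intermediate step relies essentially on the preservation of transitivity under direction-matching folds (Observation \ref{ObsContInd}).
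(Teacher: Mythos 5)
Your induction on $|V(G)|$ has a fatal flaw: the vertex-count reductions you invoke are all achieved by \emph{full folds} in the sense of Definition \ref{CombinatorialFolds}(2), i.e.\ identifications of \emph{all} of one edge with \emph{all} of another so that their terminal vertices merge. Such a fold is an isometric identification only when the two edges have equal length, which the hypothesis of rationally independent edge lengths rules out; and the lemma is used precisely to build the graph-to-rose segment of a \emph{proper} rose-to-rose fold line in $\os$ (Proposition \ref{existenceRtRFoldLines}, Proposition \ref{contOfRtRFoldLines}), where every fold must be an allowable \emph{proper} full fold. A proper full fold of $e$ over $e'$ (part of the longer identified with all of the shorter) never identifies two vertices of the graph: when $ter(e)\neq ter(e')$ it merely relocates branching from $i(e)$ to $ter(e')$, and when $ter(e)=ter(e')$ it creates a loop. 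So neither Case A of your sub-lemma nor the second fold in Case B (folding the new loop $b$ entirely onto $e_3$ to absorb $ter(e_3)$) is a legitimate move, and the quantity $|V|$ simply does not decrease under the folds that are actually available. The total branching $\sum_v(\deg(v)-2)$ is conserved by proper full folds, which is exactly why no one-or-two-step local reduction of the vertex count exists; the theta graph already defeats the scheme even after your preliminary loop-creating fold.

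The paper's proof is organized around this obstruction. It uses a two-stage complexity argument in which each elementary move is a (sequence of) direction-matching proper full folds: first it folds together pairs of embedded directed paths $\alpha,\beta$ with common endpoints and disjoint interiors, decreasing the number of directed embedded paths between distinct vertices until the graph is a \emph{gear graph} (a union of circles pairwise meeting in at most a point); then, on gear graphs, it decreases the complexity $(|V|,m)$ ($m$ the maximal valence away from the basepoint) by \emph{wrapping} a path $\beta$ around an adjacent circle $\gamma_1$ --- possibly several times --- until endpoints align, and only then folding. That wrapping step is the mechanism that eventually concentrates all branching at a single vertex, and it has no counterpart in your argument. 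To repair your proof you would need to replace the vertex count with a complexity that genuinely decreases under proper full folds and supply the path-folding and wrapping constructions; at that point you would essentially be reproducing the paper's argument.
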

\begin{proof}
We perform the following steps:

\textbf{Step 1:}
Let $c(G)$ be the number of directed \emph{embedded} paths in $G$ between (distinct) vertices. For 
each pair of vertices $w,w'$ there exists a directed path from $w$ to $w'$, thus it follows that there 
exists an embedded directed path from $w$ to $w'$. We decrease $c(G)$ by folding two directed embedded paths $\al,\beta$ so that $i(\al) = i(\beta)$ and $ter(\al) = ter(\beta)$ and
$\al \cap \beta = \{i(\al), ter(\al) \}.$
Note that the edges in $\al$ and $\beta$ are distinct and thus we are consecutively performing regular Stallings folds.
Also, if $G$ is trivalent and we choose a decomposition as in Lemma \ref{OrientEdges}, then we can choose the first $\al$ to contain $E$ and the first $\beta$ to contain $E'$ and fold $\al, \beta$ so that the first combinatorial fold folds the turn $\{E,E'\}$.
Denote the new graph by $G'$. Then, by Observation \ref{ObsContInd}, $G'$ is transitive and has rationally independent edge lengths.
The complexity has decreased, i.e. $c(G')<c(G)$.

At the end of this step we may assume that we have a connected graph $G$ so that
$G = \coprod_{i=1}^n \gamma_i / \sim$,
where $\gamma_i$ is a circle and for each $i \neq j$:
$\gamma_i \cap \gamma_j$ is either empty or a single point (see Figure \ref{circleWheel}). We call such a graph a \emph{gear graph}. Notice that given a gear graph, such a decomposition into circles is unique up to reindexing
\begin{figure}[ht]
\begin{center}
\includegraphics[width=1in]{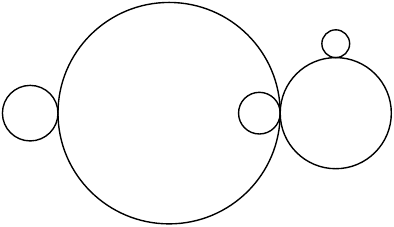}
\caption{This\label{circleWheel} graph is an example of a gear graph.}
\end{center}
\end{figure}

\textbf{Step 2:}
For a gear graph, we define a new complexity. Let $V$ be the set of vertices (of valence $>2$) and let $m=\max\{ val(w) \mid v_0 \neq w \in V \}$. Define the complexity:
\[ c(G) = (|V|, m). \]

Let $w$ be a vertex that realizes the valence $m$ and let $\gamma_1, \gamma_2$ be circles so that $w=\gamma_1\cap \gamma_2$ and suppose that $v \neq w$ is a vertex on $\gamma_2$ (see Figure \ref{circleWheel}).
Let $\gamma_2 = \al \beta$ where $i(\al) = v = ter(\beta)$ and $ter(\al) = w = i(\beta)$. By folding, wrap $\beta$ over $\gamma_1$ until $v$ is on the image of $\gamma_1$ (we may have to wrap $\beta$ multiple times over $\gamma_1$). Now there are two paths from $v$ to $w$: $\al$ and $\gamma_1'$, the remaining part of $\gamma_1$. Fold $\overline{\al}$ over $\overline{\gamma_1'}$.
$G'$ is a gear graph. Moreover, the valence of the image of $w$ decreases ($w$ may now even have valence 2).
Thus $c(G')< c(G)$.
When all vertices other than $v_0$ have valence $2$  we can no longer decrease the complexity, and $G$ will be a rose.

\end{proof}

\begin{figure}[H]
\begin{center}
\includegraphics[width=4in]{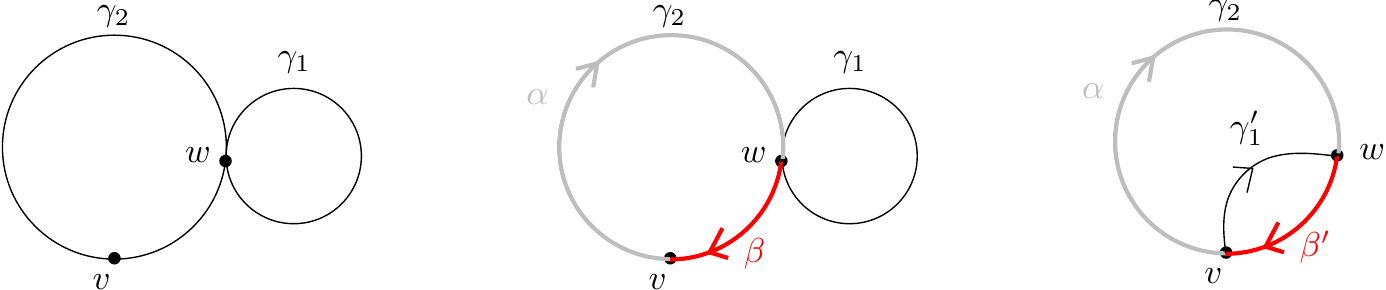}
\caption{Step 2 ($\beta'$ is the remaining portion of $\beta$, after it is wrapped around $\gamma_1$) \label{step2}}
\end{center}
\end{figure}

\begin{comment}
% This is a temporary section
\subsection{The matrices appearing in the rose-to-rose folding line}

\subsubsection{Other fold operations}
We already know the inverse of regular folding. What is left is the operation of unsubdividing at vertices that have valence equaling 2. Thus if we merge edges $e_1, e_2$ we have
\[ T = \begin{pmatrix}
1 &1& 0 & 0 \\
0 & 0& 1& 0 \\
0& 0& 0 & 1
\end{pmatrix}
 \]
with inverse corresponding to subdivision:
\[  U = \begin{pmatrix}
\al &0 & 0 \\
1-\al  & 0& 0 \\
0& 1 & 0 \\
0&0&1
\end{pmatrix} \]

\end{comment}

\section{Existence and continuity of rose-to-rose fold lines}\label{S:ContinuityCloseness}
In Subsection \ref{roseToGraphSec} we defined a rose-to-graph fold line $[x_0, x] = \calF(x_0, \ov{s})$, given a point $x$ and a loop decomposition of its underlying graph. We also had two continuity statements: (1) $x_0,\ov{s}$ vary continuously as a function of $x$ by Lemma \ref{RtoRfoldlineLemma} and (2) $\calF(x_0, \ov{s})$ vary continuously as a function of $x_0,\ov{s}$ by Lemma \ref{ContOfRtRfoldingLingLemma}.
We need similar existence and continuity statements for the full rose-to-rose fold line.

\begin{prop}\label{existenceRtRFoldLines}
Let $x$ be any point of $\uos$ satisfying that $q(x)$ is in a top-dimensional simplex of reduced Outer Space with rationally independent edge lengths. Let $\{E,E'\}$ be a pair of adjacent edges in the underlying graph of $x$. Then there exists a positive rose-to-rose fold line, which we denote by $\calR(x_0,\ov{s}) \from [0,L] \to \os$, containing $x$ and containing the fold of $\{E,E'\}$. 
\end{prop}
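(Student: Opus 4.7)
The plan is to build the rose-to-rose fold line as a concatenation of two halves meeting at $x$: first a rose-to-graph fold line terminating at $x$, and then a graph-to-rose fold line starting at $x$ whose first combinatorial fold identifies initial segments of $E$ and $E'$. Since $q(x)$ lies in a top-dimensional simplex of $\ros$, the underlying graph $G$ of $x$ is trivalent with no separating edges, so I would take $v_0$ to be the common endpoint of $E$ and $E'$ and apply Lemma \ref{OrientEdges} to obtain an orientation on $E(G)$ together with a decomposition $G = \bigcup_{i=1}^{r} \alpha_i$ into positive embedded loops based at $v_0$, satisfying the arc-of-intersection property and with $v_0$ as the terminal of both $E$ and $E'$.

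For the first half, I would invoke Lemma \ref{RtoRfoldlineLemma} applied to this loop decomposition to produce a rose $x_0 = (R_r, \mu, \ell_0)$ and a nonnegative vector $\ov{s}_1 \in \RR^{r(r-1)}$ so that $\calF(x_0, \ov{s}_1)(T_1) = x$. The rose $R_r$ inherits an orientation from the loops $\alpha_i$, each petal $e_i$ corresponding to the positively oriented loop $\alpha_i$, and every basic fold appearing in $\calF(x_0, \ov{s}_1)$ takes place at $v_0$ and either identifies initial segments of a pair $(e_j, e_m)$ or terminal segments of such a pair $(\ol{e_j}, \ol{e_m})$. With respect to the inherited orientation on each successor rose, these are precisely direction-matching folds in the sense of Definition \ref{DirectionMatchingFolds}, so the rose-to-graph half is positive.

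For the second half, I observe that the loop decomposition makes $G$ transitive: any vertex sits on some $\alpha_i$ and hence admits a positive directed path from $v_0$ as well as a positive directed path back to $v_0$. Since the edge lengths of $x$ are rationally independent, Lemma \ref{graphToRoseFolding} will apply and furnish a direction-matching combinatorial fold sequence $f_1, \ldots, f_k$ taking $G$ to a rose $z$, with $f_1$ folding the negative turn $\{E, E'\}$ at $v_0$. By Observation \ref{ObsContInd}, direction-matching folds preserve both transitivity and rational independence, so the hypothesis of Lemma \ref{graphToRoseFolding} continues to hold for each intermediate graph, and each combinatorial fold in the sequence can be realized as a (partial or full) metric fold on the length data inherited from $x$. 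Concatenating the two halves at $x$ will produce a positive rose-to-rose fold line $\calR(x_0, \ov{s}) \from [0,L] \to \uos$ containing $x$ at time $T_1$ and whose first fold after $T_1$ is the fold of $\{E, E'\}$.

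The main delicate point will be the orientation matching across the concatenation at $x$: the orientation induced on $G$ by the end of the rose-to-graph half must coincide with the orientation under which $\{E, E'\}$ is available to Lemma \ref{graphToRoseFolding} as a direction-matching turn at $v_0$. This compatibility is exactly what item (3) of Lemma \ref{OrientEdges} guarantees, by forcing $v_0$ to be the terminal vertex of both $E$ and $E'$ in the chosen loop decomposition, so that the first fold $f_1$ can follow immediately after $x$ while every basic fold in the concatenated path remains direction-matching and the overall change of marking from $x_0$ to $z$ is a positive automorphism.
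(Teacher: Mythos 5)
Your proposal is correct and follows essentially the same route as the paper: Lemma \ref{OrientEdges} (with $v_0 = ter(E) = ter(E')$) to get the loop decomposition, Lemma \ref{RtoRfoldlineLemma} for the rose-to-graph half terminating at $x$, and Lemma \ref{graphToRoseFolding} for the direction-matching graph-to-rose half whose first fold is the turn $\{E,E'\}$. Your added discussion of transitivity, preservation of rational independence, and orientation compatibility at the concatenation point fills in details the paper leaves implicit; the only step you omit is the final unsubdivision of the valence-$2$ vertices of the terminal graph $z'$ to obtain the rose $z$.
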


\begin{proof}
By Lemma \ref{RtoRfoldlineLemma} there exists a rose point $x_0$ and a vector $\ov{s}$ so that 
the rose-to-graph fold segment $\calF(x_0, \ov{s})$ terminates at the point $x$. We consider the orientation on $x$ given by the loop decomposition in Lemma \ref{OrientEdges}. We may apply Lemma 
\ref{graphToRoseFolding} to obtain a fold sequence $f_1, \dots, f_k$ which terminates in a rose $z'$ with some valence-2 vertices and so that $f_1$ folds the turn $\{E,E'\}$. Removing the valence-2 vertices gives a rose, which we denote by $z$. 
The line just described will be denoted $\calR(x_0, \ov{s})$. It satisfies the statement in the theorem. 
\end{proof}

\noindent \textbf{Notation:} Let $x_0$ be a rose, let $\ov{s} \in \RR^{r(r-1)}$, and let $\calR(x_0,\ov{s})$ be the fold line defined by these parameters. (We call such a fold line a \emph{rose-to-rose} fold line.)
We will denote by $x$ the trivalent metric graph at the end of the rose-to-graph fold segment, i.e. $x = \calR(x_0, \ov{s})(\sum s_i)$,  
by $z$ the end-point of the rose-to-rose fold segment, and by $z'$ the point in the graph-to-rose fold 
segment from which $z$ is obtained by removing the valence-2 vertices. 

\begin{df}[Proper fold line]
Let $\calR(x_0, \ov{s})$ be a rose-to-rose fold line and let $f_1, \dots, f_k$ be the sequence of combinatorial folds from $x$ to $z'$. If for each $l$ the fold $f_l$ is a proper full fold, then we will say that $\calR(x_0, \ov{s})$ is a \emph{proper fold line}. 
\end{df}

For example, for each $x$ as in Proposition \ref{existenceRtRFoldLines}, the fold line constructed in the proposition is a proper fold line since the edge lengths in $x$ are rationally independent.

\begin{prop}\label{contOfRtRFoldLines}
For each proper rose-to-rose fold line $\calR(x_0,\ov{s})$ and $\eps>0$, there exists a neighborhood $U$ of $(x_0, \ov{s})$ so that for any point $(y_0, \ov{u}) \in U$:
\begin{enumerate}

\item The endpoints of the rose-to-graph fold segments
\[ y: = \calF(y_0,\ov{u})(T'), \quad x: = \calF(x_0, \ov{s})(T) \]
lie in the same unprojectivized open simplex and are $\eps$-close.

%\item The set of points $(y_0, \{s_{ij}\})$ in this neighborhood so that $y_G:=\calF(y_0, \{s_{ij}\})(T')$ has rationally independent lengths is dense.

\item The sequence of combinatorial folds from $x$ to $z'$ 
appearing in the graph-to-rose fold segment 
is allowable in $y$.

\item %For those $(y_0, \{s_{ij}\})$  such that $y_G$ has rationally independent edge lengths, 
Let $\calR(y_0, \ov{u})$ be the fold line defined by concatenating $\calF(y_0, \ov{u})$ with the fold segments from (2), then 
the terminal points $w: = \calR(y_0, \ov{u})(L')$ and $z:=\calR(x_0, \ov{s})(L)$ are $\eps$-close.
\end{enumerate}
\end{prop}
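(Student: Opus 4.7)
The strategy is to exploit that every step in building $\calR(x_0,\ov{s})$ is linear in edge lengths and that every allowability condition for a proper full fold is a strict inequality, hence an open condition.

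For item (1) I would appeal directly to Lemma~\ref{ContOfRtRfoldingLingLemma}: on an open neighborhood of $(x_0,\ov{s})$ the endpoint $y := \calF(y_0,\ov{u})(T')$ lies in the same unprojectivized open simplex as $x$, and its edge lengths depend linearly on $(y_0,\ov{u})$. Shrinking the neighborhood if necessary ensures $d_s(y,x) < \eps$.

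For items (2) and (3) I would proceed by induction on the graph-to-rose fold sequence $f_1,\ldots,f_k$. Write $x = x^{(0)}, x^{(1)}, \ldots, x^{(k)} = z'$ for the consecutive images of $x$ under these folds and set $y^{(0)} := y$. Because $\calR(x_0,\ov{s})$ is a proper fold line, each $f_l$ is a proper full fold, so its allowability in $x^{(l-1)}$ amounts to a strict inequality between two edge lengths (Definition~\ref{AllowableFolds}), which is an open condition on the edge-length vector. Moreover, $f_l$ acts linearly on edge-length vectors through a fixed change-of-metric matrix that depends only on the combinatorics of the fold. Inductively, provided $y^{(l-1)}$ is sufficiently close to $x^{(l-1)}$, the combinatorial fold $f_l$ is allowable in $y^{(l-1)}$ and the image $y^{(l)} := f_l(y^{(l-1)})$ lies as close to $x^{(l)}$ as we wish. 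Since $k$ is fixed, only finitely many successive shrinkings of $U$ are required; this simultaneously yields allowability of the entire sequence in $y$ and the estimate $d_s(w,z) < \eps$ after the final unsubdivision of valence-$2$ vertices (which is the same combinatorial operation applied to $z'$ and to $w'$, since by construction their underlying graphs coincide).

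The main bookkeeping concern will be propagating tolerances through the $k$ folds: at each step one must choose the neighborhood of $y^{(l-1)}$ tight enough that, after applying the linear map $f_l$ and invoking the openness of the strict inequality governing the allowability of $f_{l+1}$, the induction can continue. Because a composition of finitely many fixed linear maps is linear and has bounded operator norm, this reduces to a single elementary continuity estimate rather than anything delicate.
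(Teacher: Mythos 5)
Your proposal is correct and follows essentially the same route as the paper: item (1) via Lemma \ref{ContOfRtRfoldingLingLemma} plus continuity, and items (2)--(3) by observing that allowability of each proper full fold is an open (strict positivity) condition on edge-length vectors transported by the fixed linear change-of-metric matrices, so finitely many such conditions cut out an open neighborhood and continuity of the composed linear maps gives the $\eps$-closeness of the terminal roses. The paper packages your induction as the single statement that $T_l'\cdots T_1'(\ell(y))$ must be positive for each $l$ (as in Lemma \ref{L:SequenceAllowable}), but the content is identical.
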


\begin{proof}
By Lemma \ref{ContOfRtRfoldingLingLemma} there exists a neighborhood $U$ of $(x_0, \ov{s})$ so that
for each $(y_0, \ov{u}) \in U$ the fold line $\calF(y_0,\ov{u})$ terminates at some point $y$ lying in the same unprojectivized top-dimensional simplex as $x$.
Since the edge lengths of $y$ vary continuously with the edge lengths of $y_0$ and $\ov{s}$, we can 
make $U$ smaller, if necessary, to ensure that $y$ and $x$ are $\eps$-close. This proves (1). 
%Moreover, the set of points for which $y_G$ are rationally independent is dense by Lemma \ref{RtoRfoldlineLemma}.

Let $f_1, \dots , f_k$ be the combinatorial fold sequence from $x$ to $z'$, as described in 
Lemma \ref{graphToRoseFolding}.
For each combinatorial proper full fold folding $e_i$ over $e_j$, there corresponds a square $m \times m$ folding matrix (here $m>r$), which we denote $T'_{ij} = ( a_{kl})$, so that $a_{kl} = 1$ for $k=l$ and $a_{ij} = -1$, but otherwise $a_{kl} = 0$ when $k \neq l$ (compare with Definition \ref{dfFoldingMatrices}). Let $T_1', \dots , T_k'$ be the fold matrices corresponding to $f_1, \dots, f_k$. 
Then just as in Lemma \ref{L:SequenceAllowable}, the combinatorial fold sequence $f_1, \dots, f_k$ is allowable in the point $y$ if and only if for each $l$ the vector $T_l' \cdots T_1'(\ell(y))$ is positive for each $1 \leq l \leq k$. This defines an open neighborhood of $x$ where item (2) holds.  
%
%Let $T_1,\dots , T_k$ be the fold matrices corresponding to $f_1, \dots , f_k$, as defined in Definition \ref{dfFoldingMatrices}.  
%Recall that the sequence of folds is allowable in $y$ if and only if $T_i \cdots T_1(\ell(y))$ is a positive vector 
%for all $1 \leq i \leq k$. This is satisfied for $x$, hence in a neighborhood of $x$. This proves (2), and 
Item (3) follows from the fact that matrix multiplication is continuous. 
\end{proof}

\begin{lemma}\label{changeOfMetricMatrixPos}
Let $h \from x_0 \to z$ be a homotopy equivalence representing the map from the initial rose $x_0$ to the terminal rose $z$ in a rose-to-rose fold line.
Let $H$ be a matrix representing the change-of-metric from the rose $z$ to the rose $x_0$. Then
$H$ is a nonnegative invertible integer matrix and it equals the transition matrix of $h$.
\end{lemma}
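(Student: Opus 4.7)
The plan is to identify $H$ directly as the transition matrix of $h$ by tracking how lengths of edges of $x_0$ are distributed over the edges of $z$, and then to deduce invertibility from the positivity of $h$.

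First I would observe that $h \from x_0 \to z$ factors as a finite composition of three kinds of elementary operations: (i) the successive folds of the rose-to-graph segment $\calF(x_0, \ov s)$, (ii) the proper full folds $f_1, \dots, f_k$ produced by Lemma \ref{graphToRoseFolding}, and (iii) the removal of valence-$2$ vertices that produces $z$ from $z'$. Each of these is a quotient map that either identifies two initial segments of equal length or merges two edges meeting at a valence-$2$ vertex into a single edge of combined length; in either case the map is a piecewise isometry on the underlying topological spaces. By induction on the number of elementary steps, the image $h(e_i)$ of each edge $e_i$ of $x_0$ is a simplicial edge-path in $z$ whose metric length (computed with $\ell^z$) equals $\ell^{x_0}(e_i)$. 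Expanding this metric length as a sum of $\ell^z(e_j)$'s weighted by the number of times $h(e_i)$ crosses $e_j$, one obtains
\[
\ell^{x_0}(e_i) \;=\; \sum_{j=1}^{r} a_{ij}\,\ell^{z}(e_j),
\]
where $a_{ij}$ is, by definition, the $(i,j)$-entry of the transition matrix $T(h)$. This identifies the change-of-metric matrix as $H = T(h)$ and shows in particular that $H$ has nonnegative integer entries.

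For invertibility I would verify that every elementary fold appearing in the line is direction matching. The $K = r(r-1)$ turns folded in $\calF(x_0,\ov s)$ are precisely the $\binom{r}{2}$ positive-positive and $\binom{r}{2}$ negative-negative turns of $R_r$, so every fold in the rose-to-graph segment is direction matching. The folds $f_1,\dots,f_k$ are direction matching by Lemma \ref{graphToRoseFolding}, and valence-$2$ unsubdivision merges two edges that are already consistently oriented since $z'$ is transitively oriented by iterated application of Observation \ref{ObsContInd}. Iterating Observation \ref{directionMatchingIndOrientation}, the composed map $h$ sends every positive edge of $x_0$ to a positive edge-path in $z$, so $h$ represents a positive automorphism $\Phi_h \in \textup{Aut}(F_r)$. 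For such a positive automorphism the transition matrix coincides with the abelianization matrix $\Phi_h^{\ab} \in GL_r(\ZZ)$, so $H = T(h)$ is invertible.

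The most delicate point is the length-preservation claim at the valence-$2$ unsubdivision step: the edge set of the graph changes, and one must check that the number of times the final path $h(e_i)$ traverses the merged edge $c$ in $z$ equals the number of times the corresponding path in $z'$ traversed $a*b$ (or its reverse), so that $\sum_j a_{ij}\,\ell^z(e_j)$ continues to record the metric length of $h(e_i)$ after the bookkeeping change.
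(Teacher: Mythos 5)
Your proposal is correct and follows essentially the same route as the paper: identify $H$ with the transition matrix of $h$ by noting that $h$ restricted to each petal is a local isometry onto an edge-path in $z$ (so lengths satisfy $\ell^{x_0}(e_i)=\sum_j a_{ij}\ell^z(e_j)$), then deduce invertibility from the fact that all folds are direction matching, so $h$ is a positive map whose transition matrix equals its abelianization in $GL_r(\ZZ)$. The "delicate point" you flag about valence-$2$ unsubdivision is handled in the paper by arguing directly that $h(e_i)$ is a simplicial edge-path in $z$ (no backtracking, vertex maps to vertex), which resolves the bookkeeping concern.
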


\begin{proof}
%Let $h \from x \to z$ be the homotopy equivalence from the initial simplex to the terminal simplex of the rose-to-rose fold line.
Let $x_0$ be the initial rose with edges $e_1, \dots, e_r$ and let $z$ be the terminal rose with edges $e_1', \dots , e_r '$.
Let $z'$ be the graph on the fold line just before $z$, i.e. $z$ is obtained from $z'$ by unsubdividing at all of the valence-2 vertices.
Let $h\from x_0 \to z$ and $g \from x_0 \to z'$ be the relevant homotopy equivalences. 
Since $g$ is a subdivision followed by folding maps, each of which is a positive map, we have for each $i$ that $g|_{e_i}$ is a local isometry. 
Thus, $h|_{e_i}$ is a local isometry. 
Moreover, $h$ maps the unique vertex of $x_0$ to the unique vertex of $z$. Suppose $h(e_i)$ contains a 
part of an edge $e_i'$, then $h(e_i)$ contains a full appearance of $e_i'$ (since there is no backtracking and the vertex maps to the vertex). Therefore, $h|_{e_i}$ is   an edge-path in $z$. Thus, we may write
\begin{equation}\label{TerToIni} l(e_i) = \sum_{j}m(i,j) l(e_j'), \end{equation} where the $m(i,j)$ are the nonnegative integer entries of the transition matrix of $h$.
By Equation \ref{TerToIni}, the change-of-metric matrix from $z$ to $x_0$ coincides with the transition matrix of the homotopy equivalence $h$. 
Therefore, $H$ is nonnegative, and integer.
Moreover, since all the folds in a rose-to-rose fold line are direction matching, $h$ is a positive map. Thus, 
$H$ is equal to $Ab(h)$, the map induced by $h$ (viewed as an automorphism) by abelianization. Therefore, $H$ is invertible.
\end{proof}

\begin{lemma}\label{AllowableNbhd}
For each $\eps>0$ and proper fold line $\calR(x_0, \ov{s})$, there exists a neighborhood $U$ of the terminal rose $z$ such that, for each $w \in U$, there exists a proper rose-to-rose fold line $\calR(y_0, \ov{u})$ terminating at $w$ satisfying that 
\begin{enumerate}
\item the top graphs $x,y$ are $\eps$-close,
\item the combinatorial fold sequence corresponding to the graph-to-rose segments are the same in both lines, and
\item the change-of-metric matrix for both fold lines is the same.
\end{enumerate}
\end{lemma}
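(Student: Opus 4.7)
The plan is to exploit Lemma \ref{changeOfMetricMatrixPos}, which states that the change-of-metric matrix $H$ from the terminal rose $z$ to the initial rose $x_0$ is an invertible nonnegative integer matrix equal to the transition matrix of the homotopy equivalence $h \from x_0 \to z$ induced by the fold line. Crucially, $H$ is determined only by the combinatorial data of the fold line---the loop decomposition underlying the rose-to-graph segment together with the combinatorial fold sequence $f_1, \dots, f_k$ of the graph-to-rose segment---not by the particular metric parameters $(x_0, \ov{s})$. Hence any other proper rose-to-rose fold line with the same combinatorial structure and same rose markings automatically inherits the same matrix $H$, so its initial rose $y_0$ and terminal rose $w$ satisfy $\ell(y_0) = H \ell(w)$; in particular, the terminal rose is a function of $y_0$ alone.

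Given $w$ close to $z$ in the same unprojectivized simplex, I would define $y_0$ to be the rose with the same marking as $x_0$ and edge-length vector $\ell(y_0) := H \ell(w)$. Since $\ell(x_0) = H \ell(z)$ is positive and the map $w \mapsto H\ell(w)$ is linear hence continuous, for $w$ sufficiently close to $z$ the vector $\ell(y_0)$ is positive and close to $\ell(x_0)$, so $y_0$ is a valid rose near $x_0$. Then take $\ov{u} := \ov{s}$, so that the pair $(y_0, \ov{u})$ lies close to $(x_0, \ov{s})$.

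Next, I would apply Proposition \ref{contOfRtRFoldLines} to $(x_0, \ov{s})$ with tolerance $\eps$. For $(y_0, \ov{u})$ in the neighborhood produced by the proposition, the rose-to-graph segment $\calF(y_0, \ov{u})$ terminates at a trivalent graph $y$ in the same unprojectivized simplex as $x$ and $\eps$-close to $x$, and the combinatorial fold sequence $f_1, \dots, f_k$ remains allowable in $y$ with each fold still a proper full fold (by openness of the strict positivity conditions defining properness). This yields a proper rose-to-rose fold line $\calR(y_0, \ov{u})$ with the same combinatorial structure as $\calR(x_0, \ov{s})$, giving items (1), (2), and (3) at once; the desired neighborhood $U$ of $z$ is then the set of $w$ in the simplex of $z$ whose image under $w \mapsto H\ell(w)$ lies in the neighborhood given by Proposition \ref{contOfRtRFoldLines}.

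Finally, I would verify that the terminal rose of $\calR(y_0, \ov{u})$ is exactly $w$: since $\calR(y_0, \ov{u})$ has the same combinatorial structure as $\calR(x_0, \ov{s})$, Lemma \ref{changeOfMetricMatrixPos} applied to it yields the same change-of-metric matrix $H$, whence its terminal rose has lengths $H^{-1} \ell(y_0) = H^{-1} H \ell(w) = \ell(w)$. The main subtlety I anticipate is justifying that holding $\ov{u} = \ov{s}$ fixed while varying $y_0$ with $w$ actually realizes every prescribed $w$; this is precisely what Lemma \ref{changeOfMetricMatrixPos} guarantees, since the terminal rose is a function of $y_0$ alone for a fixed combinatorial structure, and the remaining $r(r-1)$ parameters $\ov{u}$ govern only the choice of intermediate trivalent graph $y$ rather than the terminal rose.
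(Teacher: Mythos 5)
Your argument is correct and follows essentially the same route as the paper's proof: both rest on Lemma \ref{changeOfMetricMatrixPos} to produce the nonnegative invertible matrix $H$ determined by the combinatorial data alone, pull back a neighborhood of $x_0$ under $H^{-1}$ to obtain $U$, and invoke Proposition \ref{contOfRtRFoldLines} to keep the top graphs $\eps$-close with the same (still proper) combinatorial fold sequence, whence item (3) is automatic. Your explicit verification that the terminal rose of $\calR(y_0,\ov{u})$ is exactly $w$ (via $\ell(w)=H^{-1}\ell(y_0)$) is a detail the paper leaves implicit, but it is not a different method.
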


\begin{proof}
We prove (1) and (2). By Lemma \ref{changeOfMetricMatrixPos}, the change-of-metric matrix $H$ from $z$ to $x_0$ is nonnegative. Thus, for any $w$ in the same unprojectivized simplex as $z$, we have that $H \ell(w)$ is also positive. 
By Proposition 
\ref{contOfRtRFoldLines}, there exists a neighborhood $V$ of $x_0$ so that for each $y_0 \in V$  there exists a vector $\ov{u}$ so that if $y$ is the top graph of the fold line $\calF(y_0, \ov{u})$, then $x$ and $y$ are $\eps$-close and their combinatorial fold sequences are the same. The neighborhood $U$ can be taken in $H^{-1}(V)$. 
This proves (1) and (2). 
Since the combinatorial folds are the same, the transition matrix $y_0 \to w$ is the same as the 
transition matrix $x_0 \to z$. 
(3) follows from Lemma \ref{changeOfMetricMatrixPos}. 
\end{proof}

\begin{df}[Rational rose-to-rose fold lines]
A rose-to-rose fold line $\calR(x_0, \ov{s})$ is called \emph{rational} if for each edge $e$ in $G$ and for each loop $\al_i$ in the loop decomposition of the underlying graph of $x$, the quotient $\frac{l(e,x)}{l(\al_i, x)}$ is rational.
\end{df}

\begin{prop}\label{ApproxByRRtRFoldLines}
For each $\eps>0$ and for each $x$ in an unprojectivized top-dimensional simplex of reduced Outer Space, with rationally independent edge-lengths, there exists a rational proper rose-to-rose fold line passing through some $x'$ in the same open unprojectivized simplex as $x$, satisfying that $d(x',x)< \eps$.  
\end{prop}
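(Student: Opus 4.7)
The plan is to apply Proposition \ref{existenceRtRFoldLines} to obtain a proper rose-to-rose fold line through $x$, and then perturb its parameters slightly to rational values, producing a rational proper rose-to-rose fold line whose top graph lies in the same open unprojectivized simplex as $x$ and is $\eps$-close to it.

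In more detail, since $x$ is in an unprojectivized top-dimensional simplex of reduced Outer Space with rationally independent edge lengths, Proposition \ref{existenceRtRFoldLines} produces a proper rose-to-rose fold line $\calR(x_0, \ov{s})$ whose rose-to-graph portion terminates at $x$. Let $f_1, \dots, f_k$ be the combinatorial fold sequence of its graph-to-rose portion, each a proper full fold. I would then use Proposition \ref{contOfRtRFoldLines} (together with Lemma \ref{ContOfRtRfoldingLingLemma}) to find a neighborhood $U$ of $(x_0, \ov{s})$ in the parameter space $\sig_{R_r} \times \RR_{\geq 0}^{r(r-1)}$ on which the construction $(y_0,\ov{u}) \mapsto \calR(y_0,\ov{u})$ still produces: (i) a top graph $y$ in the same open unprojectivized simplex as $x$ that is $\eps$-close to $x$ in the simplicial metric (hence also in the Lipschitz metric, by Remark \ref{sameTopology}), and (ii) an allowable proper full fold sequence $f_1, \dots, f_k$ at $y$. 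The openness of (ii) uses the fact that properness of each full fold $f_l$ requires a strict inequality between edge lengths (Definition \ref{AllowableFolds}), which persists under small enough perturbations.

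Since rational tuples are dense in the parameter space, one may then choose $(x_0', \ov{s}') \in U$ with all rational entries and set $x' := \calF(x_0', \ov{s}')(T')$. By construction, $\calR(x_0', \ov{s}')$ is a proper rose-to-rose fold line and $x'$ lies in the same open simplex as $x$ with $d(x',x) < \eps$. For rationality, one notes that the loop lengths satisfy $l(\al_i, x') = l(e_i, x_0')$, which is rational; and by Equation (\ref{eqOppLengths}) from Lemma \ref{ContOfRtRfoldingLingLemma}, each edge length $l(e, x')$ is an integer linear combination of the $l(e_i, x_0')$ and the entries of $\ov{s}'$, hence rational. Therefore every ratio $l(e,x')/l(\al_i, x')$ is rational, and $\calR(x_0',\ov{s}')$ is rational.

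The main technical point is mere bookkeeping: one must verify that the several openness conditions---correct combinatorial simplex for $y$, allowability of each subsequent fold, and strictness of the proper-full-fold inequalities at the appropriate intermediate graph---can be simultaneously met on a common neighborhood $U$. Each is individually open by the continuity statements of Lemma \ref{ContOfRtRfoldingLingLemma} and Proposition \ref{contOfRtRFoldLines}, so their finite intersection is open, and density of $\QQ$-valued tuples completes the argument.
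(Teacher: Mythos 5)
Your proposal is correct and follows essentially the same route as the paper: take the proper rose-to-rose fold line through $x$ from Proposition \ref{existenceRtRFoldLines}, invoke the continuity/openness statements of Lemma \ref{ContOfRtRfoldingLingLemma} and Proposition \ref{contOfRtRFoldLines}, and make a rational perturbation. The only cosmetic difference is that you perturb the parameters $(x_0,\ov{s})$ to rational values and deduce rationality of the ratios at $x'$ via the integer linear relations, whereas the paper perturbs $x$ itself to a point with rational ratios and pulls back to parameters via the linearity in Lemma \ref{RtoRfoldlineLemma}; these are equivalent under the linear correspondence.
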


\begin{proof}
Let $x$ be a point in an unprojectivized top-dimensional simplex and having rationally independent edges. 
Let $\calR(x_0,\ov{s})$ be a proper rose-to-rose fold line containing $x$. 
Let $V$ be a neighborhood of $(x_0, \ov{s})$ guaranteed by Proposition \ref{contOfRtRFoldLines}. Let $U$ be a neighborhood of $x$ in the same top-dimensional simplex and such that for each $y \in U$ there exists some $(y_0, \ov{u}) \in V$ with $y$ the terminal point of $\calF(y_0, \ov{u})$. 
This is possible by Lemma \ref{RtoRfoldlineLemma}. 
Let $x'$ be a point in $U$ which is $\eps$-close to $x$ and such that the ratios $\frac{l(e',x')}{l(\al_i, x')}$ are rational. Then the resulting fold line through $x'$ will have the required properties. 
\end{proof}

\section{Constructing the fold ray}\label{S:RayConstruction}

Enumerate $\mP_r$ by $\{ v_i \}_{i=1}^\infty$ (see \sref{s:BrunsAlgorithm}).
For each $i$ there are countably many rational proper rose-to-rose fold lines. Thus there are countably many rational proper rose-to-rose fold lines terminating at a rose with length-vector $v_i$. For each such fold line $\calR_{ij}$, let $F_{ij}$ denote its fold matrix and $H_{ij}$ its inverse - an invertible nonnegative matrix.
Let $U_{ij}$ denote the neighborhood of $v_i$ from Lemma \ref{AllowableNbhd} (for $\eps = 1$ will suffice), i.e. for each $w \in U_{ij}$ there exists a proper fold line terminating at $w$, passing through the same simplices as $\calR_{ij}$ and satisfying that their fold matrices are the same. 

Since each $A_i=A_{v_i}$ is a positive matrix, for each $i,j$ there exists an integer $n(i,j)$ satisfying that for each $n>n(i,j)$ we have $A_i^{n}(\RR^r_+) \subset U_{ij}$.

Recall that $g_{v_i}$ has a decomposition into fold automorphisms obtained from Brun's algorithm, this induces a decomposition of $A_i$ to unfolding matrices and $A_i^{-1}$ into folding matrices.
We then create a sequence of pairs denoted $\{a_k\}$ which satisfies: 
\begin{enumerate} 
\item If $a_k=(i,j)$ for an odd $k$ then there exists some $n$ such that $n>n(i,j)$ so that $a_{k+1}=(i,n)$. 
\item For each $i, N \in \NN$ there exists an $n>N$ and an even $k$ so that $(i,n)=a_k$,
\item For each $i, j \in \NN$, there exist infinitely many odd $k$'s so that $a_k=(i,j)$, 
\end{enumerate}

To each $a_k$ in this sequence we attach an (unfolding) matrix or sequence of matrices and  automorphisms: if $k$ is odd and $a_k=(i,j)$ we attach the matrix $H_{ij}$ related to the rose-to-rose fold line $\calR_{ij}$ and define $f_k$ to be its change-of-marking automorphism. And if $k$ is even and $a_k=(i,n)$ we attach a sequence of unfolding matrices according to a Brun's algorithm decomposition $A_i^n = (M_1^i \dots M_k^i)^n$ and a sequence of fold automromphisms. We get a sequence of matrices, which we denote by $\{D_l\}_{l=1}^\infty$. For each $l$, either $D_l = H_{ij}$ or $D_l = M^i_j$ for some $i,j$. We emphasize that we have not decomposed $H_{ij}$. Moreoverm we get a sequence of automorphisms $\{f_l\}_{l=1}^\infty$. Let $\{Z_l\}_{l=1}^\infty$ denote the sequence of folding matrices, i.e. $Z_l = D_l^{-1}$ for each $l$. 

\begin{df}[Ray $\mathfrak{R}$]\label{construction}
We construct as follows the geodesic fold ray $\mathfrak{R}$ that we later prove is dense. 
Let $x_0$ be a rose in the unprojectivized base simplex (i.e. having the identity marking) with the length vector $w_0$ provided by Lemma \ref{l:DefiningMetric} for the matrix sequence $\{D_l\}$ that we defined in the paragraph above. 
For each $k$ let $x_k$ be the rose with length vector inductively defined as $w_k=Z_kw_{k-1}$ and with the marking $f_k \circ \dots \circ f_1$. 
For each $l$, if $Z_l$ is a single fold matrix coming from the matrix decomposition of $A^n_i$, then let the line from $x_l$ to $x_{l+1}$ be the single proper full fold fold line corresponding to the matrix $Z_l$. This fold is allowable since $Z_l w_{l} = w_{l+1}$ is positive. 
If $Z_l=H_{i,j}$ for some $i,j$ coming from $a_k = (i,j)$, then $a_{k+1}=(i,n)$ for $n>n(i,j)$. Hence there is a number $s$ so that the following $s$ matrices $\{D_d\}_{d=l+1}^{l+s}$ are the matrices of the decomposition of $A_i^{n}$. Therefore, $w_{l+1} = A^n_i(w_{l+s})$ for $n>n(i,j)$. Thus $x_{k+1} \in U_{ij}$, so the  fold line corresponding to $\calR_{ij}$ is allowable in $x_{k}= H_{ij}(x_{k+1})$.
We insert this fold line between such $x_k$ and $x_{k+1}$.
This defines a fold ray connecting the $x_k$'s.
\end{df}

\begin{mainthmB}\label{main2}
For each $r \geq 2$, there exists a geodesic ray $\tilde \gamma \from [0, \infty) \to \ros$ so  that the projection of $\tilde \gamma$ to $\uros/\out$ is dense. 
\end{mainthmB}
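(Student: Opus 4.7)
The plan is to show that the fold ray $\mathfrak{R}$ constructed in Definition \ref{construction}, together with its canonical lift $\tilde{\mathfrak{R}}(t) = (\mathfrak{R}(t),[\mathfrak{R}_t])$ to $\uros$, satisfies the theorem. Two things need verification: first, that $\mathfrak{R}$ is a directed Lipschitz geodesic fold ray in $\ros$, and second, that $\tilde{\mathfrak{R}}$ projects to a dense set in $\uros/\out$. The argument is a structural upgrade of the proof of Theorem C, substituting rose-to-rose fold lines (as developed in Sections \ref{S:dgAll} and \ref{S:ContinuityCloseness}) for the basic fold lines used there.

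For the geodesic property, I would observe that every fold appearing in $\mathfrak{R}$ is direction-matching. The folds arising from a Brun decomposition of $A_i^n$ are positive fold automorphisms of the rose, and each rose-to-rose fold line $\calR_{ij}$ is by construction a concatenation of direction-matching proper full folds: the rose-to-graph segment is a simultaneous positive unfolding, and the graph-to-rose segment is produced in Lemma \ref{graphToRoseFolding} using only direction-matching folds. Hence Corollary \ref{PositiveFoldLineGeodesics} applies to the entire concatenated sequence, yielding an unparametrized geodesic that can be reparametrized as a directed Lipschitz geodesic ray.

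For density, given $(y,[\alpha]) \in \uros$ and $\epsilon > 0$, I would first use that points with rationally independent edge lengths in the interior of a top-dimensional simplex of $\ros$ are dense, together with continuity of the germ data, to reduce to such a $y$. Proposition \ref{existenceRtRFoldLines} combined with Proposition \ref{ApproxByRRtRFoldLines} then produces a rational proper rose-to-rose fold line passing within $\epsilon/2$ of $y$ and whose combinatorial fold sequence begins, at the point nearest $y$, with the fold of the turn associated to $[\alpha]$. After an $\out$-translation sending its initial rose to the base rose, this fold line is one of the enumerated lines $\calR_{ij}$, with terminal length vector $v_i$. Condition (1) in the construction of $\{a_k\}$ guarantees that whenever $a_k=(i,j)$ with $k$ odd, the pair $a_{k+1}$ equals $(i,n)$ for some $n > n(i,j)$; thus immediately after the matrix $H_{ij}$ appears in $\mathfrak{R}$, the Brun decomposition of $A_i^n$ is applied. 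Writing $w_{k+1} = A_i^n\, w_{k+1+s}$ places $w_{k+1}$ in $A_i^n(\RR^r_+) \subset U_{ij}$, so Lemma \ref{AllowableNbhd} produces a proper rose-to-rose fold line starting at the rose with length vector $w_{k+1}$ that is uniformly $\epsilon/2$-close to $\calR_{ij}$ and shares its combinatorial fold sequence. This nearby fold line contains a point $\epsilon$-close to $y$ whose subsequent fold realizes the germ $[\alpha]$. Condition (3) supplies infinitely many odd $k$ with $a_k=(i,j)$, so such approximations occur arbitrarily far along $\mathfrak{R}$, giving the required density after projection to $\uros/\out$.

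The principal obstacle will be the bookkeeping at the level of tangent germs: one needs not only that the underlying metric graphs $\epsilon$-approximate $y$, but also that the fold performed immediately afterwards lies in the class $[\alpha]$, and this germ condition must survive the perturbation passing from $\calR_{ij}$ to the nearby rose-to-rose fold line that actually appears in $\mathfrak{R}$. Proposition \ref{existenceRtRFoldLines} is what allows the first fold of the approximating line to be prescribed, and Lemma \ref{AllowableNbhd}(2) -- preservation of the combinatorial fold sequence under perturbation -- is what transports the germ from $\calR_{ij}$ onto the actual fold line inside $\mathfrak{R}$.
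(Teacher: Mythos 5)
Your overall strategy is the paper's: the same ray $\mathfrak{R}$ from Definition \ref{construction}, the geodesic property via Corollary \ref{PositiveFoldLineGeodesics}, and density via rose-to-rose fold lines prescribed to begin with the fold of a given turn. However, there is one step that fails as written. You assert that the rational proper rose-to-rose fold line produced near $y$ by Propositions \ref{existenceRtRFoldLines} and \ref{ApproxByRRtRFoldLines} ``is one of the enumerated lines $\calR_{ij}$, with terminal length vector $v_i$.'' That is not true: the enumeration $\{\calR_{ij}\}$ consists only of rational lines terminating at roses whose length vectors are Perron--Frobenius eigenvectors $v_i \in \mP_r$, and a rational line passing near an arbitrary $y$ will terminate at some rational length vector that is generically not any $v_i$. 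Since the ray $\mathfrak{R}$ only ever traverses (perturbed copies of) the $\calR_{ij}$, your argument as stated never exhibits a segment of $\mathfrak{R}$ near $y$. The missing link is exactly where Theorem D enters: one takes the neighborhood $U$ of the terminal rose of the rational line $\calR'$ supplied by Lemma \ref{AllowableNbhd}, uses density of $\mP_r$ to find some $v_i \in U$, and then Lemma \ref{AllowableNbhd} produces a line terminating at $v_i$ -- hence an actual $\calR_{ij}$ -- passing through the same unprojectivized simplices as $\calR'$ with the same change-of-metric matrix and with top graph $\eps$-close to that of $\calR'$. Only after this intermediate approximation does your second perturbation (from $\calR_{ij}$ to the segment of $\mathfrak{R}$ landing in $U_{ij}$ after $A_i^n$) apply.

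A secondary omission: you announce that $\mathfrak{R}$ must be shown to lie in $\ros$ but never verify it. The paper does this by observing that every graph along the ray is transitive (all folds are direction matching, and transitivity is preserved by Observation \ref{ObsContInd}), and a transitive graph has no separating edge. Your germ bookkeeping at the end is handled correctly and matches the paper's use of Proposition \ref{existenceRtRFoldLines} to prescribe the first fold and of the shared combinatorial fold sequence to transport it.
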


\begin{proof}
We will show that, for each $r \geq 2$, the fold ray $\mathfrak{R}$ of Definition \ref{construction} is contained in $\uros$ and projects densely into $\mathcal{U}_r$. The ray is geodesic by Corollary \ref{PositiveFoldLineGeodesics}.

To prove that $\mathfrak{R}$ never leaves $\ros$, i.e. contains no graph with a separating edge, it will suffice to show that at each point $x \in \mathfrak{R}$, the underlying graph can be directed so that it is a transitive graph. This clearly holds for each proper full fold of a rose, hence for the fold sequences coming from the decompositions of the $g_{v_i}^k$. Moreover, for each $i,j$  $\calR_{ij}$ consists of transitive graphs since all folds are direction matching, see Observation \ref{ObsContInd}(2).

Let $x$ be a point in an unprojectivized top-dimensional simplex with rationally independent edge lengths. 
Let $G$ be its underlying graph and $\{E, E'\}$ a turn. 
By Theorem \ref{existenceRtRFoldLines} we can construct a proper positive rose-to-rose fold line $\calR = \calR(x_0, \ov{s})$ containing $x$ and the combinatorial fold $f_1$ of $\{E, E'\}$ directly after $x$. 
We may assume that the terminal point of this rose-to-rose fold line $z$ lies in $\sig_0$. 
For each $\eps>0$, by Lemma \ref{ApproxByRRtRFoldLines}, there exists a proper \emph{rational} rose-to-rose fold line $\calR'$ containing a point $x'$ in the same unprojectivized open simplex as $x$, so that $x,x'$ are $\eps$-close, and so that the fold $f_1$ is the fold following $x'$.
Let $H$ be the unfolding matrix corresponding to $\calR'$.

For each $\eps>0$, there exists, as in Lemma \ref{changeOfMetricMatrixPos}, an open neighborhood $U$ of the terminal point of $\calR'$ so that for each $w \in U$, there exists a proper rose-to-rose fold line $\mathcal{R}'(y_0, \ov{u})$, terminating at $w$, such that the top graphs $x',y$ are $\eps$-close, the combinatorial fold sequence in the graph-to-rose segments are the same, and the change-of-metric matrix for $\mathcal{R}'(y_0, \ov{u})$ is $H$.
Since the set of PF eigenvectors is dense, there exists an $i$ so that the PF eigenvector $v_i \in U$. 
Hence, there exists a rose-to-rose fold line $R_{ij}$ passing through the same unprojectivized simplices as $\calR'$ 
and having the same change-of-metric matrix $H_{ij} = H$. 
Moreover, there exists a point $x_{ij}'' \in R_{ij}$ in the same unprojectivized top-dimensional simplex as $x'$ and $\eps$-close to $x'$. 
By Definition \ref{construction}, there exist infinitely many $k$'s so that the fold line between $x_k$ and $x_{k+1}$ is the one passing through the same unprojectivized simplices as $R_{ij}$ (hence $\calR'$). In fact, these occur before arbitrarily high powers of $g_{v_i}$, so that they terminate arbitrarily close to a rose with length vector $v_i$. Let $k$ be such a number and let $\Psi_k$ be the composition of the automorphisms $f_1, f_2, \dots $ up to $x_{k}$. 
Thus, by Lemma \ref{changeOfMetricMatrixPos}, there exists a point $\xi \in [x_{k-1}, x_{k}] \cdot \Psi_k^{-1}$ in the same unprojectivized top-dimensional simplex as $x_{ij}''$ and $\eps$-close to $x_{ij}''$. Hence, $\xi$ is the point on the ray defined in Definition \ref{construction} that is $3\eps$-close to our original point $x$ and the fold immediately after $\xi$ is the one folding the turn $\{E,E'\}$. 
\end{proof}

\begin{mainthmA}\label{main1}
For each $r \geq 2$, there exists a geodesic fold ray in the reduced Outer Space $\ros$ whose projection to $\ros/\out$ is dense. 
\end{mainthmA}

\begin{proof} This is an immediate corollary of Theorem B.
\end{proof}

\section{Appendix: Limits of fold geodesics.}{\label{AppendixBoundary}}

In many cases, as in the case of the geodesic that we construct in this paper, a concatenation of fold segments $\{\gamma_i \colon [i,i+1] \to \uos\}_{i=1}^\infty$ that glue together to a ray $\gamma \colon [0,\infty) \to \uos$, projecting under $q$ to a Lipschitz geodesic, satisfies the properties of a semi-flow line below.

\begin{definition}[Semi-flow line] (\cite[pg. 3]{hm11}, definition of a ``fold line'')
A semi-flow line in unprojectivized Outer Space is a continuous, injective, proper function 
$\RR \to \uos$ 
defined by a continuous 1-parameter family of marked graphs $t \to G_t$ 
for which
there exists a family of homotopy equivalences $h_{ts} : G_s \to G_t$ 
defined for $s \leq  t \in \RR$,
each of which preserves marking, such that the following hold:
\begin{enumerate}
\item
Train track property: For all $s \leq t \in \RR$, the restriction of $h_{ts}$ to the
interior of each edge of $G_s$ is locally an isometric embedding.
\item 
Semiflow property: $h_{ut} \circ h_{ts} = h_{us}$ for all $s \leq t \leq u \in \RR$ and $h_{ss} \colon G_s \to G_s$ is the identity for all $s \in \RR$.
\end{enumerate}
\end{definition}

Handel and Mosher (\cite[\S 7.3]{hm11}) prove each flow line converges (in the axes or Gromov-Haussdorff topologies) to a point $T_\infty$ in $\overline{\calX_r}$, the direct limit of the system.  

\begin{theorem}
For any flow line, its direct limit $T_\infty$ is an $F_r$-tree that has trivial arc stabilizers. Hence, in particular, not every point in the boundary is the direct limit of a flow line.
\end{theorem}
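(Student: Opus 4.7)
\emph{Proof plan.} The theorem comprises two assertions. The second follows from the first once one exhibits a point of $\overline{\os}\setminus\os$ with non-trivial arc stabilizers; such points exist in abundance by the general theory of very small $F_r$-trees (see e.g.\ the work of Bestvina--Feighn and Cohen--Lustig on the structure of $\overline{\os}$), so the bulk of the work is the first assertion.

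The strategy for showing $T_\infty$ has trivial arc stabilizers is to leverage the train track property of the semi-flow together with the freeness of the $F_r$-action on each $\widetilde{G_t}$. First I would observe that each lift $\widetilde{h_{ts}} \from \widetilde{G_s} \to \widetilde{G_t}$ is $F_r$-equivariant and $1$-Lipschitz, being a local isometry on the interior of every edge, so the direct limit $T_\infty$ inherits an isometric $F_r$-action. Next I would associate to any candidate non-degenerate arc $[p,q] \subset T_\infty$ of length $L > 0$ a compatible family of \emph{legal} paths $\alpha_t \subset \widetilde{G_t}$ of length $L$ for all sufficiently large $t$, where ``legal'' means no turn along $\alpha_t$ is folded by any subsequent $\widetilde{h_{t's}}$. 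Legality is precisely the condition ensuring that each $\widetilde{h_{t's}}$ restricts to an isometric embedding on $\alpha_t$, so the arc length persists in the limit.

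The core step is an argument by contradiction. Suppose $g \in F_r \setminus \{1\}$ fixes $[p,q]$ pointwise. By equivariance, $g \cdot \alpha_t$ is another legal path in $\widetilde{G_t}$ converging to the same arc $[p,q]$; since $F_r$ acts freely on $\widetilde{G_t}$, the paths $\alpha_t$ and $g\cdot\alpha_t$ are disjoint, and the distance between them is bounded below by the translation length $\ell_{G_t}(g) > 0$. For both to converge to the same arc in $T_\infty$ we must have $\ell_{G_t}(g) \to 0$. The main obstacle---and the principal technical step---is to rule out this possibility. The cleanest route is to produce an \emph{eventually legal} immersed loop representative of $g$ in $G_t$: its length is then preserved by all later folds and therefore bounds $\ell_{G_t}(g)$ below by a positive constant. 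Such a representative can be built by combinatorially pushing illegal turns along an axis representative of $g$ forward until they are folded away and iterating, the procedure terminating because the illegal subpath of the axis strictly shortens at each step. Once the lower bound on $\ell_{G_t}(g)$ is established, the contradiction closes and arc stabilizers of $T_\infty$ are trivial, completing the proof.
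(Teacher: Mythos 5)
Your reduction of the second assertion to the first is fine, and your setup (equivariant $1$-Lipschitz edge-isometries, an induced action on the direct limit, $\ell_{G_t}(g)\to 0$ for an arc-stabilizing $g$) matches the paper up to that point, although the intermediate claim that $\alpha_t$ and $g\cdot\alpha_t$ are \emph{disjoint} because the action is free is false --- freeness does not prevent a path from overlapping its translate, and indeed a long subsegment of the axis of $g$ overlaps its $g$-translate in all but a fundamental domain, so the ``distance bounded below by translation length'' assertion fails. The correct reason $\ell_{G_t}(g)\to 0$ is simply that translation lengths are non-increasing along the system and converge to the translation length in $T_\infty$, which is $0$ for an element fixing a point.

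The genuine gap is in your core step. You propose to contradict $\ell_{G_t}(g)\to 0$ by producing an \emph{eventually legal} immersed loop representative of $g$, whose length would then be a positive lower bound for $\ell_{G_t}(g)$. If this worked for the arc-stabilizing $g$, the same argument would work for every nontrivial $g$ and would show that $T_\infty$ has no elliptic elements at all, i.e.\ that the limit action is free --- which is false for general semi-flow lines: limit trees routinely have nontrivial point stabilizers, and the corresponding conjugacy lengths do tend to $0$ without the loop ever becoming legal. Your termination argument (``the illegal subpath strictly shortens at each step'') does not close this: strict decrease of a positive real quantity does not force it to reach zero in finitely many steps, and this is exactly the mechanism by which elements become elliptic. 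The paper's proof instead \emph{accepts} that $\ell_{T_t}(\gamma)\to 0$ and locates the contradiction elsewhere: a subsegment $[z_t,w_t]$ of the axis of $\gamma$ of definite length $\geq 1-\eps$ carries at most $N$ illegal turns (with $N$ fixed once and for all at time $s$), so some legal gap has length $\geq \frac{1-\eps}{N+1}$; once the translation length drops below $\frac{1-\eps}{3(N+1)}$, the translate of some illegal-turn point lands in the interior of a legal gap, so $\gamma$ carries an illegal turn to a legal turn, contradicting equivariance of the train track structure. That combinatorial count of illegal turns on a definite-length piece of the axis is the idea your proposal is missing, and without it the contradiction does not close.
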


\begin{proof}
We lift the maps to a direct system on trees $\{f_{ts}: T_s \to T_t\}$ that are $F_r$-equivariant, restrict to isometric embeddings on edges, and form a direct system. In \cite[\S 7.3]{hm11} it is shown that the maps $f_{\infty s} \colon T_s \to T_\infty$ that are given by the direct limit construction are also edge-isometries and $F_r$-equivariant. 

Assume $x,y \in T_\infty$ are such that $\gamma \in Stab[x,y]$. Without generality loss, assume $d(x,y)=1$ in $T_\infty$. We will show this leads to a contradiction.
For each $t \in [a, \infty)$, we denote by $A_t(\gamma)$ the axis of $\gamma$ in $T_t$.
Letting $\eps = \frac{1}{5}$, there exists some $s \geq a$ so that:
$f_{\infty s}(x_s) = x$, $f_{\infty s}(y_s) = y$, and 
$d(x_s, \gamma x_s)< \eps , \quad d(y_s, \gamma y_s) <\eps$, and  $|d(x_s, y_s) - 1|<\eps$.

Since $f_{ts}$ is distance non-increasing for all $t,s$, we have for all $t \geq s$ that
\[ 1 \leq d(f_{ts}(x_s), f_{ts}(y_s)) \leq 1+ \eps. \]

Note that for all $f_{ts}(x_s)$ and $f_{ts}(y_s)$, they are at most a distance of $\frac{\eps}{2}$ from $A_t(\gamma)$. Otherwise, for example, $d(f_{ts}(x_s), \gamma f_{ts}(x_s)) \geq \eps$, which contradicts the fact that the maps are distance non-increasing. 

Thus, for each $t \geq s$ there exist $z_t, w_t$ such that $[z_t, w_t] \subset A_t(\gamma) \cap [f_{ts}(x_s)), f_{ts}(y_s))]$, and $d(z_t,f_{ts}(x_s))<\frac{\eps}{2}$ and $d(w_t, f_{ts}(y_s))<\frac{\eps}{2}$.
Hence, $d(z_t, w_t) \geq 1-\eps$. 

Let $N$ be the number of $f_{\infty s}$-illegal turns in the path $[x_s, y_s]$. Thus, the number of $f_{\infty t}$-illegal turns in the path
$[f_{ts}(x_s)), f_{ts}(y_s))]$ is $\leq N$. Hence, the number of $f_{\infty t}$-illegal turns in the path 
$[z_t, w_t]$ is $\leq N$. Let us denote the points of $[z_t, w_t]$ where the illegal turns occur by $a^1_t, \dots a^N_t$, and we also denote $a^0_t := z_t$ and $a^{N+1}_t:=w_t$. 
%Hence for some $i=0,\dots N$: $d(a^i_t, a^{i+1}_t) \geq \frac{1-\eps}{N}$. 

However, for sufficiently large $t$, the translation length of $\gamma$ in $T_t$ is $<\frac{1-\eps}{3(N+1)}$. Note that since $[z_t, w_t]$ is on $A(\gamma)$, we have that 
$[z_t, w_t] \cap \gamma [z_t, w_t]$ is equal to $[z_t, w_t]$ with (possibly) segments of lengths $\leq \frac{1-\eps}{3(N+1)}$ cut from either end.
Since there are $\leq N+1$ segments $(a^i_t, a^{i+1}_t)$ in $[z_t, w_t]$, one of them has length $\geq \frac{1-\eps}{N+1}$. 
Thus, for some $i=0,1, \dots,  N$, we have $\gamma(a^i_t) \in (a^i_t, a^{i+1}_t)$ or $\gamma^{-1}(a^i_t) \in (a^{i-1}_t, a^{i}_t)$, which are both legal segments. Without generality loss suppose the former. Thus, the turn taken by 
$[z_t, w_t]$ at $a^i_t$ is illegal, but the turn taken by $\gamma[z_t,w_t]$ at $\gamma(a^i_t)$ is legal, since it equals the turn taken by $[z_t, w_t]$ at $\gamma(a^i_t)$. This is a contradiction to the equivariance.
\end{proof}

\bibliographystyle{amsalpha}
\bibliography{PaperReferences}

\end{document}